\documentclass[letterpaper, 10pt,reqno]{amsart}

\usepackage[usenames,dvipsnames]{xcolor}

\usepackage[portrait,margin=3cm]{geometry}
\usepackage{mathrsfs}

\usepackage[foot]{amsaddr}
\usepackage{amssymb,amsthm,amsfonts,amsbsy,latexsym}
\usepackage[reqno]{amsmath}

\usepackage{color}
\usepackage{graphicx}
\usepackage{bbm}
\usepackage{comment}
\usepackage{bm}
\usepackage{a4wide,graphicx, listings,lscape, epstopdf, units, textcomp, fancyhdr, url, soul, color,subcaption}
\usepackage[textsize=small]{todonotes}
\usepackage{enumitem}

\usepackage{array}

\usepackage{orcidlink}

\usepackage{mathtools}
\mathtoolsset{showonlyrefs}

\definecolor{MyDarkblue}{rgb}{0,0.08,0.50}
\definecolor{Brickred}{rgb}{0.65,0.08,0}

\usepackage{hyperref}
\hypersetup{
	colorlinks=true,       
	linkcolor=MyDarkblue,          
	citecolor=Brickred,        
	filecolor=red,      
	urlcolor=cyan           
}

\newtheorem*{theorem*}{Theorem}
\newtheorem{theorem}{Theorem}[section]
\newtheorem{lemma}[theorem]{Lemma}

\newtheorem{proposition}[theorem]{Proposition}
\newtheorem{corollary}[theorem]{Corollary}
\newtheorem{problem}[theorem]{Open Problem}

\theoremstyle{definition}
\newtheorem{definition}[theorem]{Definition}
\newtheorem{assumption}[theorem]{Assumption}
\newtheorem{remark}[theorem]{Remark}

\newenvironment{assKalpha}{
	\textbf{Assumption $\boldsymbol{\cK_\alpha}$. }}{}

\renewcommand{\P}{\mathbb{P}}


\newcommand{\eps}{\varepsilon}




\newcommand{\cA}{\mathcal{A}}\newcommand{\cB}{\mathcal{B}}
\newcommand{\cD}{\mathcal{D}}\newcommand{\cE}{\mathcal{E}}\newcommand{\cF}{\mathcal{F}}

\newcommand{\cK}{\mathcal{K}}
\newcommand{\cO}{\mathcal{O}}
\newcommand{\cP}{\mathcal{P}}\newcommand{\cR}{\mathcal{R}}
\newcommand{\cS}{\mathcal{S}}\newcommand{\cT}{\mathcal{T}}\newcommand{\cU}{\mathcal{U}}
\newcommand{\cW}{\mathcal{W}}







\newcommand{\Var}{{\rm Var}}

\newcommand{\e}{{\mathrm e}}

\setcounter{secnumdepth}{3} 
\setcounter{tocdepth}{2}    

\newcommand{\R}{\mathbb{R}}
\newcommand{\N}{\mathbb{N}}
\newcommand{\Z}{\mathbb{Z}}

\newcommand{\dd}{\mathrm{d}}

\renewcommand{\emptyset}{\varnothing}



\newcommand*{\wt}{\widetilde}

\newcommand*{\be}{\begin{equation}}
	\newcommand*{\ee}{\end{equation}}
\newcommand*{\ba}{\begin{aligned}}
	\newcommand*{\ea}{\end{aligned}}
\newcommand*{\barr}{\begin{array}{c}}
	\newcommand*{\earr}{\end{array}}
\newcommand{\toinp}{\overset{\mathbb P}{\longrightarrow}}
\newcommand{\toindis}{\overset{\mathrm{d}}{\longrightarrow}}
\newcommand{\toas}{\overset {\mathrm{a.s.}}{\longrightarrow}}

\newcommand*{\ind}{\mathbbm{1}}
\makeatletter
\def\namedlabel#1#2{\begingroup
	#2%
	\def\@currentlabel{#2}%
	\phantomsection\label{#1}\endgroup
}

\makeatother

\newcommand{\bes}{\begin{equation*}}
	\newcommand{\ees}{\end{equation*}}

\renewcommand{\P}[1]{\mathbb{P}\!\left(#1\right)}
\newcommand{\E}[1]{\mathbb{E}\left[#1\right]}

\renewcommand{\N}{\mathbb{N}}

\newcommand{\I}{\mathbb{I}}

\renewcommand{\th}{\mathrm{th}}
\newcommand{\cont}{\mathrm{ct}}

\setlength\parindent{0pt}
\setlength{\parskip}{1ex plus 0.5ex minus 0.2ex}
\numberwithin{equation}{section}

\renewcommand{\e}{\mathrm{e}}

\newcommand{\bp}{\mathrm{BP}}
\newcommand{\floor}[1]{\lfloor #1\rfloor}
\newcommand{\ceil}[1]{\lceil #1\rceil}
\newcommand{\Ps}[1]{\mathbb P_{\cS}\left(#1\right)}
\newcommand{\Es}[1]{\mathbb E_{\cS}\left[#1\right]}
\newcommand{\toinps}{\xrightarrow{\mathbb P_\cS}}
\newcommand{\pre}{\mathrm{Pre}}

\newcommand{\ensymboldremark}{\hfill$\blacktriangleleft$}

\newcommand{\invisible}[1]{}

\makeatletter
\newcommand{\leqnomode}{\tagsleft@true\let\veqno\@@leqno}
\newcommand{\reqnomode}{\tagsleft@false\let\veqno\@@eqno}
\makeatother
\newlength{\tagmarginsep} 
\setlength{\tagmarginsep}{1cm}

\title[Persistence in Preferential Attachment Trees with Vertex Death]{Preferential Attachment Trees with Vertex Death: Persistence of the Maximum Degree}
\author{Bas Lodewijks\orcidlink{0000-0001-5624-2410}}
\address{School of Mathematical and Physical Sciences, University of Sheffield.}
\email{bas.lodewijks@sheffield.ac.uk}
\date{\today} 

\begin{document}
	
	\begin{abstract}
		We consider an evolving random discrete tree model called \emph{Preferential Attachment with Vertex Death}, as introduced by Deijfen~\cite{Dei10}. Initialised with an alive root labelled $1$, at each step $n\geq1$ either a new vertex with label $n+1$ is introduced that attaches to an existing \emph{alive} vertex selected preferentially according to a function $b$, or an alive vertex is selected preferentially according to a function $d$ and \emph{killed}. 
		In this article we introduce a generalised concept of \emph{persistence} for evolving random graph models. Let $O_n$ be the smallest label among all alive vertices (the oldest alive vertex), and let $I_n^m$ be the label of the alive vertex with the $m^{\mathrm{th}}$ largest degree. We say a \emph{persistent $m$-hub} exists if $I_n^m$ converges almost surely, we say that \emph{persistence} occurs when $I_n^1/O_n$ is tight, and that \emph{lack of persistence occurs} when $I_n^1/O_n$ tends to infinity.
		
		We identify two regimes called the \emph{infinite lifetime} and \emph{finite lifetime} regimes. In the infinite lifetime regime, vertices are never killed with positive probability. Here, we provide conditions under which we prove the (non-)existence of \emph{persistent $m$-hubs} for any $m\in\N$. This expands and generalises recent work of Iyer~\cite{Iyer24} which covers the case $d\equiv 0$ and $m=1$. In the finite lifetime regime, vertices are killed after a finite number of steps almost surely. Here we provide conditions under which we prove the occurrence of \emph{persistence}, which complements recent work of Heydenreich and the author~\cite{HeyLod25}, where lack of persistence is studied for preferential attachment with vertex death.
	\end{abstract}
	
	\maketitle

	\tableofcontents
	
	\section{Introduction}\label{sec:intro}
	
	Random graph models based on preferential attachment are quite well-understood nowadays due to a large body of research formed over the last three decades (see~\cite{Hofstad1,Hofstad2} and the references therein for a good overview of the literature). These models have become a staple in random graph theory due to their use in modelling real-world networks. Still, a clear gap between most of these models and networks that form in the real world is the fact that preferential attachment graphs allow for \emph{growth}  only. Vertices and edges are sequentially attached to the graph, causing the graph to grow. However, this is not particularly realistic, as nodes and bonds in real-world networks can be \emph{removed} in almost all contexts as well. As an example, people are born but also pass away; friendships are formed but can also be broken; users of social media can befriend or follow others, but also unfriend or unfollow them and even remove their accounts all together.

	A limited body of work focusses on (preferential attachment type) models that allow for vertices and/or edges to also be removed from the network. Generally speaking, in such models at each step, 
	\begin{enumerate}[label=(\arabic*)] 
		\item A new vertex is added to the graph with probability $p_1$, which connects to $m$ already existing vertices (preferentially or uniformly at random).
		\item A new edge is added to the graph between existing vertices with probability $p_2$, where the existing vertices are selected preferentially and/or uniformly. 
		\item A vertex is selected preferentially or uniformly and is removed with probability $p_3$ (including the edges incident to it).
		\item An edge is selected uniformly and is removed with probability $p_4$.
	\end{enumerate}
	Examples include the models studied by Cooper et al.~\cite{CooFrieVer04} (with $m\in\N$) with further work of Lindholm and Vallier~\cite{LinVal11} and Vallier~\cite{Val13}, Chung and Lu~\cite{ChungLu04} (with $m=1$), Deo and Cami~\cite{DeoCami07} (with $m=1$, $p_2=p_4=0$, and vertex deletion is done anti-preferentially, i.e.\ favouring vertices with low degree), and Deijfen and Lindholm~\cite{DeiLind09} (with $m=1$, $p_3=0$). Another model of Britton and Lindholm~\cite{BritLind10}, with further work in~\cite{BritLin11,KuckSchu20}, assigns a random fitness value to each vertex, and edges are added between existing vertices with a probability proportional to the fitness values of the vertices. Work on duplication-divergence type models with edge deletion includes, among others, \cite{Thor15,HerPfaf19,BarLo21,LoReiZha25}. 	
	
	The work on such models with vertex and/or edge removal is \emph{limited} for a number of reasons. First, it generally only considers the degree distribution of the model, and if further properties are studied, this is under restrictive conditions and assumptions.  Second, all models mentioned only study affine preferential attachment or uniform attachment for attaching edges. Third, in most models the choice to add or remove vertices and edges at each step is determined by fixed model parameters (the $p_i$) and does not depend on the evolution of the graph itself. 
	
	Some recent work partially addresses these concerns. Diaz, Lichev, and the author study a model with uniform attachment and uniform vertex removal ($m$ is random, $p_2=p_4=0$), where the local weak limit, existence of a giant component, and the size and location of the maximum degree are studied~\cite{DiazLichLod22}. Work of Bellin, Kammerer et al.~\cite{BelBlaKamKor23,BelBlaKamKor23II,KamKorSen25} studies a tree model of uniform attachment and uniform vertex removal ($m=1$, $p_2=p_4=0$), but where vertices are not removed but are `frozen' and can then no longer make new connections, and where the choice to freeze vertices need not be random. This allows them to study the model in a `critical window' where they obtain precise results for the height of the tree and its scaling limit.
	
	Finally,  Deijfen~\cite{Dei10} studies a Preferential Attachment tree model with Vertex Death (PAVD). Here, death is equivalent to freezing as in the work discussed above but simply carries a different name. In this model, the probability of killing a vertex or adding a vertex, as well as to which alive vertex this new vertex connects itself, is dependent on the state of the tree. In particular, it depends on the in-degrees of the alive vertices in a general way. Deijfen studied the limiting degree distribution of the tree conditionally on survival, and shows with a number of examples how introducing death can yield novel behaviour compared to preferential attachment trees without death.  	
	
	In this article we focus on the PAVD model introduced and studied by Deijfen. To this end, let us provide a definition of the model. For a tree $T$, we naturally think of its edges as being directed towards the root. We then let $\deg_T(v)$ denote the in-degree of a vertex $v$ in $T$. For a sequence of trees $(T_n)_{n\in\N}$, we  write $\deg_n(v)$ rather than $\deg_{T_n}(v)$ for ease of writing.
	
	\begin{definition}[Preferential Attachment with Vertex Death]\label{def:pavd}
		Let $b\colon\N_0\to (0,\infty)$ and $d\colon \N_0\to [0,\infty)$ be two sequences. We recursively construct a sequence of trees $(T_n)_{n\in\N}$ and a sequence of sets of vertices $(\cA_n)_{n\in\N}$ as follows. We initialise $T_1$ as a single vertex labelled $1$ and $\cA_1=\{1\}$. For $n\geq 1$, conditionally on $T_n$ and $\cA_n$, if $\cA_n\neq \emptyset$, we select a vertex $i$ from $\cA_n$ with probability 
		\be 
		\frac{b(\deg_n(i))+d(\deg_n(i))}{\sum_{j\in \cA_n} b(\deg_n(j))+d(\deg_n(j))}.
		\ee 
		Then, conditionally on $T_n$ and $i$, we either \emph{kill} vertex $i$ with probability 
		\be 
		\frac{d(\deg_n(i))}{b(\deg_n(i))+d(\deg_n(i))}, 
		\ee 
		and set $T_{n+1}=T_n$ and $\cA_{n+1}=\cA_n\setminus\{i\}$, or otherwise construct $T_{n+1}$ from $T_n$ by introducing a new vertex $n+1$ which we connect by a directed edge to $i$ and set $\cA_{n+1}=\cA_n\cup\{n+1\}$.
		
		If $\cA_n=\emptyset$, we terminate the recursive construction, set $T_i=T_n$ for all $i>n$, and say that \emph{the tree has died}. When $\cA_n\neq \emptyset$ for all $n\in\N$ we say that the \emph{tree process survives}.
	\end{definition} 
	
	We see from the definition that the probabilities to both select and kill a vertex depend on the evolution of the tree itself. Furthermore, in our analysis of the model, we do not restrict ourselves to the uniform case ($b$ and/or $d$ constant) or the affine case ($b(i)=b_1i+b_2$ and/or $d(i)=c_1i+c_2$). Finally, we recover the standard preferential attachment tree model when $d\equiv 0$.
	
	\textbf{Persistence of the maximum degree.} The main focus of this article is \emph{persistence of the maximum degree}, which in evolving random graphs is defined as the emergence of a \emph{fixed} vertex that attains the largest degree in the graph for all but finitely many steps. When there is lack of persistence, such a fixed vertex does not exist and the maximum degree changes hands infinitely often (i.e.\ the maximum degree is attained by different vertices infinitely often). Persistence of the maximum degree is also known as degree centrality and is often leveraged in network archaeology and root-finding algorithms (see e.g.~\cite{BanBha22,BriCalLug23,BubMosRac15} and references therein). 
	
	The notion of persistence in evolving random graphs has been studied from different perspectives, starting with the work of Dereich and M\"orters~\cite{DerMor09}. Later, conditions under which persistence and the lack thereof hold have been weakened  by Galashin~\cite{Gal13}, Banerjee and Bhamidi~\cite{BanBha21}, and Iyer~\cite{Iyer24}. In short (and omitting minor technical assumptions), using the PAVD formulation with $d\equiv 0$, so that there is no vertex death and we recover the classic preferential attachment model, persistence of the maximum degree occurs almost surely if and only if 
	\be \label{eq:sumcond}
	\sum_{i=0}^\infty\frac{1}{b(i)^2}<\infty.
	\ee 
	Heuristically, persistence occurs when high-degree vertices have a sufficiently strong advantage over low-degree vertices when establishing new connections, so that they are most likely to increase their in-degree further. This allows one vertex to attain the largest degree for all but finitely many steps, instead of the largest degree switching between vertices infinitely often. The transition between whether the advantage is strong enough or not lies exactly at the point when the series in~\eqref{eq:sumcond} goes from finite to infinite.
	
	When vertices are killed and can no longer make new attachments afterwards, as in the PAVD model, the notion of persistence as stated above is not sufficiently general. In this case, we distinguish between whether vertices can live forever with positive probability, or whether they are killed after a finite (random) number of steps almost surely. In the former case, it may be possible that a vertex attains the $m^{\mathrm{th}}$ largest degree for all but finitely many steps. We call such an individual a \emph{persistent $m$-hub}. In the latter case, no individual lives forever, so that in particular no persistent $m$-hub can exist for any $m\in\N$. Here, a related question is whether the \emph{oldest alive individual} and the the \emph{richest alive individual}, that is, the alive individual with the smallest label and the alive individual with the largest degree, respectively, have labels that are `close' or `far apart'. More precisely, whether the ratio of their labels is bounded or unbounded. When the ratio is bounded, we say that \emph{persistence} occurs, whereas \emph{lack of persistence} occurs when the ratio diverges to infinity. 
	
	In the case of preferential attachment without death all individuals are always alive, so that the oldest alive vertex is always the vertex with label $1$. The ratio of their labels thus equals the label of the maximum degree vertex, which implies that the existence of persistent $1$-hub is essentially equivalent to persistence. This is, however, not the case in models that incorporate vertex death.
	
	\textbf{Our contribution. } In this article we study the two regimes outlined above. In the \emph{infinite lifetime} regime, in which vertices can live forever with positive probability, we provide conditions, similar to~\eqref{eq:sumcond}, under which a persistent $m$-hub exists almost surely conditionally on the tree process $(T_n)_{n\in\N}$ surviving, for each $m\in\N$, as well as conditions under which persistent hubs do not exist. This analysis generalises and expands recent work of Iyer~\cite{Iyer24}, which focusses on the case $d\equiv 0$ and $m=1$.
	
	The second regime is the \emph{finite lifetime} regime, in which vertices are killed after a finite number of steps almost surely. Here, we provide conditions, similar to~\eqref{eq:sumcond}, under which persistence occurs. This builds on recent work by the author and Heydenreich~\cite{HeyLod25}, in which lack of persistence is studied for the PAVD model in both the infinite and the finite lifetime regime, and extends the methodology developed by Deijfen in~\cite{Dei10} using an embedding of the discrete tree process in a  continuous-time branching process known as a Crump-Mode-Jagers branching process. We use a more precise formulation of this embedding compared to Deijfen, which allows us to simplify certain proofs of results in~\cite{Dei10}, but mainly to develop novel and precise results regarding the behaviour of the branching process embedding that were unattainable previously. Furthermore, we extend ideas used by Banerjee and Bhamidi in~\cite{BanBha21} to precisely analyse the optimal window in which the vertex with the largest degrees is found in the finite lifetime regime.
	
	\textbf{Structure of the paper. } We state the main results and the necessary assumptions in Section~\ref{sec:results}. The methodology used in the analysis in presented in Section~\ref{sec:embed}. Section~\ref{sec:inflife} studies the existence of persistent $m$-hubs in the infinite lifetime regime, whereas Section~\ref{sec:finlife} focusses on the occurrence of persistence in the finite lifetime regime. We conclude with a discussion of our work, state several open problems, and discuss possibilities for future directions in Section~\ref{sec:disc}.
	
	\textbf{Notation. } Throughout the paper we use the following notation: we let $\N\coloneq \{1,2,\ldots\}$ denote the natural numbers, set $\N_0\coloneq \{0,1,\ldots\}$ to include zero and let $[t]\coloneq \{i\in\N: i\leq t\}$ for any $t\geq 1$. For $x,y\in\R$, we let $\lceil x\rceil\coloneq \inf\{n\in\Z: n\geq x\}$ and $\lfloor x\rfloor\coloneq \sup\{n\in\Z: n\leq x\}$, and let $x\wedge y\coloneq \min\{x,y\}$ and $x\vee y\coloneq \max\{x,y\}$. For sequences $(a_n)_{n\in\N},(b_n)_{n\in\N}$ such that $b_n$ is positive for all $n$ we say that $a_n=o(b_n)$ and $a_n=\mathcal{O}(b_n)$ if $\lim_{n\to\infty} a_n/b_n=0$ and if there exists a constant $C>0$ such that $|a_n|\leq Cb_n$ for all $n\in\N$, respectively. We write $a_n=\Theta(b_n)$ if $a_n=\cO(b_n)$ and $b_n=\cO(a_n)$. For random variables $X,(X_n)_{n\in\N}$, and $Y$ we let $X_n\toindis X, X_n\toinp X$ and $X_n\toas X$ denote convergence in distribution, probability and almost sure convergence of $X_n$ to $X$, respectively. $X\preceq Y$ (resp.\ $X\succeq Y$) denotes that $X$ is stochastically dominated by $Y$ (resp.\ $X$ stochastically dominates $Y$). For a sequence $(\cE_t)_{t\in \I}$ of events, where $\I=\N$ or $\I=[0,\infty)$, we say that $\cE_t$ holds with high probability when $\lim_{t\to\infty}\P{\cE_t}=1$.  Finally, for an event $\cS$ such that $\P{\cS}>0$ we let  $\Ps{\cdot}\coloneq \mathbb{P}(\cdot\, |\,\cS)$ and  $\mathbb E_\cS{}{\cdot}\coloneq \E{\cdot\,|\,\cS}$ denote the conditional probability measure and conditional expectation, respectively. Then, we let $\xrightarrow{\mathbb P_\cS-\mathrm{a.s.}}$ respectively $\overset{\mathbb  P_\cS}{\longrightarrow}$ denote almost sure convergence and convergence in probability with respect to the probability measure $\mathbb P_\cS$. 
	
	\section{Results}\label{sec:results}
	
	In this section we present the main results and the necessary assumptions that we use. Recall the PAVD model, as in Definition~\ref{def:pavd}. We let 
	\be \label{eq:surv}
	\cS\coloneq \bigcap_{n\in\N}\{\cA_n\neq \emptyset\}
	\ee 
	denote the event that the tree process \emph{survives}, that is, the construction is never terminated. We assume throughout that $\P{\cS}>0$. We let $\mathbb P_{\cS}$ denote the probability measure $\mathbb P$ conditionally on $\cS$. Upon $\cS$, the quantities of interest are the \emph{oldest alive individual} (i.e.\ with smallest label) and the \emph{alive individuals with the largest degrees} in $T_n$. We thus define
	\be \label{eq:On}
	O_n\coloneq \min \cA_n
	\ee 
	as the `oldest' alive individual (i.e.\ with the smallest label), and recursively define 
	\be \label{eq:In1}
	I_n^{(1)}\coloneq \min\{i\in\cA_n\colon  \deg_n(i)\geq \deg_n(j) \text{ for all }j\in \cA_n\},
	\ee\label{eq:Inm}
	and for any fixed integer  $m\geq2$, 
	\be I_n^{(m)}\coloneq \min\{i\in\cA_n\setminus\{I_n^{(i)}\}_{i\in[m-1]}\colon \deg_n(i)\geq \deg_n(j) \text{ for all }j\in \cA_n\setminus\{I_n^{(i)}\}_{i\in[m-1]}\}.
	\ee 
	We can interpret $I_n^{(m)}$ as the oldest alive individual with the $m^{\mathrm{th}}$ largest degree in $T_n$ and we call $I_n^{(m)}$ the \emph{$m$-hub} of $T_n$. We note that $I_n^{(m)}$ is well-defined only if $|\cA_n|\geq m$. As we shall see, this condition is satisfied for any fixed $m\in\N$ and all $n$ sufficiently large almost surely conditionally on survival. 
	
	We are interested in the long-term behaviour of  $I_n^{(m)}$ and $O_n$. Since $O_n$ is non-decreasing in $n$, it follows that $O_n$ converges $\mathbb P_\cS$-almost surely to a limit $O$. We say that a \emph{persistent elder} exists  when  
	\be\tag{PE}\label{eq:persOn}
	O\coloneq \lim_{n\to\infty} O_n \text{ is finite }\mathbb P_\cS\text{-almost surely}.
	\ee
	When $O<\infty$ holds, we can think of $O$ as the first vertex that is added to the tree that will never be killed. It is clear that this limit exists if and only if vertices can live forever with positive probability (i.e.\ by never being both selected and killed). Otherwise, $O=\infty$ $\mathbb P_\cS$-almost surely. In a similar manner, we say that    a \emph{persistent $m$-hub} exists when there exists a $\mathbb P_\cS$-almost surely finite random variable $I^{(m)}$ such that
	\be \tag{PH-$m$}\label{eq:pershub}
	\lim_{n\to\infty}I_n^{(m)}=I^{(m)}\qquad \mathbb P_\cS\text{-almost surely}.
	\ee 
	Intuitively, the existence of the limit in~\eqref{eq:pershub} implies that the vertex with the $m^{\th}$ largest degree eventually \emph{stabilizes}, in the sense that vertex $I^{(m)}$ attains the $m^{\th}$ largest degree in $T_n$ for all but finitely many $n$. 
	
	By noting that that $I_n^{(m)}\geq O_n$ for all $m,n\in\N$, it follows that the limit $I^{(m)}$ in~\eqref{eq:pershub} cannot exist when the limit $O$ in~\eqref{eq:persOn} equals infinity. That is, if no persistent elder exists, then also no persistent $m$-hub exists.  In this case, we instead focus on the growth rate of the variables $I_n^{(m)}$ and $O_n$. We say that \emph{persistence} occurs in $(T_n)_{n\in\N}$ when 
	\be\tag{P}\label{eq:pers} 
	\Big(\frac{I_n^{(1)}}{O_n}\Big)_{n\in\N}\text{ is a tight sequence of random variables with respect to $\mathbb P_\cS$}, 
	\ee 
	and that \emph{lack of persistence} occurs in $(T_n)_{n\in\N}$ when 
	\be \tag{LP}\label{eq:nopers}
	\frac{I_n^{(1)}}{O_n}\xrightarrow{\mathbb P_\cS} \infty. 
	\ee
	We intuitively think of~\eqref{eq:pers} as the vertex with the largest degree being `close' to the oldest alive vertex, in terms of their label, whereas~\eqref{eq:nopers} implies that they are `far apart`.
	
	Clearly, when a persistent $1$-hub exists,  as in~\eqref{eq:pershub} with $m=1$, then the existence of a persistent elder, as in~\eqref{eq:persOn}, and the existence of a  persistent $1$-hub, as in~\eqref{eq:pers}, are implied, since $1\leq O_n\leq I_n^{(1)}$ and $O_n$ is non-decreasing. Similarly, lack of persistence, as in~\eqref{eq:nopers}, implies that no persistent $1$-hub exists. 
	
	In this paper, we study:
	\begin{itemize}
		\item A class of PAVD models for which we prove when it contains a persistent $m$-hub for any $m\in\N$. (Theorem~\ref{thrm:inflife})
		\item A class of PAVD models for which we prove that it does not contain a persistent $m$-hub for any $m\in\N$, but persistence does occur. (Theorem~\ref{thrm:conv})
	\end{itemize}
	To be able to state our main results, we need to define the following quantities.  Let $(E_i)_{i\in\N_0}$ be a sequence of independent exponential random variables, where $E_i$ has rate $b(i)+d(i)$, and let $(B_i)_{i\in\N_0}$ be a sequence of independent Bernoulli random variables (also independent of the $E_i$), such that 
	\be 
	\P{B_i=1}=\frac{b(i)}{b(i)+d(i)}, \qquad\text{for }i\in\N_0. 
	\ee 
	Then, define 
	\be \label{eq:D}
	S_k\coloneq \sum_{i=0}^{k-1} E_i, \qquad  \ D\coloneq \inf\{i\in\N_0: B_i=0\}, \qquad \text{and}\qquad \cR\coloneq \sum_{j=1}^D\delta_{S_j}, 
	\ee 
	where $\delta$ is a Dirac measure. Note that $\cR$ is an empty point process when $D=0$. We let  $\mu$ denote the density of the point process $\cR$, i.e.
	\be \label{eq:mu}
	\mu(t)\coloneq \lim_{\eps \downarrow 0}\eps^{-1}\P{\cR((t,t+\eps))\geq 1}, \qquad t\geq 0. 
	\ee 
	Also, let $\widehat\mu(\lambda)$ denote the Laplace transform of $\mu$, for $\lambda\geq0$. That is, 
	\be \label{eq:muhat}
	\widehat \mu(\lambda)\coloneq \int_0^\infty \e^{-\lambda t}\mu(t)\, \dd t=  \E{\sum_{k=1}^D\exp\bigg(-\lambda \sum_{i=0}^{k-1} E_i\bigg)}=\sum_{k=1}^\infty \prod_{i=0}^{k-1}\frac{b(i)}{b(i)+d(i)+\lambda}. 
	\ee 
	The explicit expression for $\widehat\mu$ in terms of the sequences $b$ and $d$ follows from~\cite[Proposition $1.1$]{Dei10}. Finally, we define for $k\in\N$ the sequences
	\be \label{eq:seqs1}
	\varphi_1(k)\coloneq  \sum_{i=0}^{k-1}\frac{1}{b(i)+d(i)}, \quad \varphi_2(k)\coloneq \sum_{i=0}^{k-1}\Big(\frac{1}{b(i)+d(i)}\Big)^2, \quad \rho_1(k)\coloneq \sum_{i=0}^{k-1}\frac{d(i)}{b(i)+d(i)}.
	\ee
	We split the presentation of the results into two parts, based on the \emph{infinite lifetime} regime and the \emph{finite lifetime} regime. As their names suggest, vertices are able to \emph{survive forever} with positive probability in the `infinite lifetime' regime, whereas vertices have an almost surely finite lifetime (i.e.\ they are killed after a finite number of steps a.s.) in the `finite lifetime' regime. Note that preferential attachment without death is a special case of the former regime, where all individuals survive forever deterministically. 
	
	The proof techniques for the results related to each regime are entirely distinct. In the `finite lifetime' regime we use a similar methodology as used in~\cite{HeyLod25}, though we now focus on the occurrence of persistence rather than lack of persistence, which requires a finer analysis at times. In the `infinite lifetime' regime we expand and generalise ideas developed by Iyer in~\cite{Iyer24}. 
	
	\textbf{Infinite lifetime regime. } We first present our result for the PAVD model when vertices can survive forever with positive probability.  To this end, we introduce the following assumptions. Recall $\widehat \mu$ from~\eqref{eq:muhat}. We assume that 
	\be \tag{F-L}\label{ass:mufin}
	\text{There exists $\lambda>0$ such that }\widehat \mu(\lambda)<\infty. 
	\ee 
	Furthermore, we have the assumptions
	\begin{align} 
		\lim_{k\to\infty}\varphi_1(k)&=\sum_{i=0}^\infty \frac{1}{b(i)+d(i)}=\infty,\tag{N-E}\label{ass:A1} \\
		\lim_{k\to\infty}\varphi_2(k)&=\sum_{i=0}^\infty \frac{1}{(b(i)+d(i))^2}<\infty, \tag{C-V}\label{ass:varphi2}\\
		\lim_{k\to\infty}\rho_1(k)&=\sum_{i=0}^\infty \frac{d(i)}{b(i)+d(i)}=\infty.\tag{F-D}\label{ass:A2}
	\end{align}
	Throughout the paper we assume that Assumption~\eqref{ass:mufin} ``Finite-Laplace" holds. It ensures that the branching process used in the analysis does not grow infinitely large in finite time.   Assumption~\eqref{ass:A1} ``Non-Explosion''  is of a similar nature and implies that there does not exist a unique vertex in the tree that obtains an infinite degree (as $n\to\infty$).  Assumption~\eqref{ass:varphi2} ``Converging-Variance" implies that the variance of $S_k$ converges with $k$, and is a \emph{crucial} assumption under which we prove that  a persistent $m$-hub exists (as in~\eqref{eq:pershub}) or that persistence occurs (as in~\eqref{eq:pers}). Finally, Assumption~\eqref{ass:A2} ``Finite-Degree" implies that the random variable $D$, as in~\eqref{eq:D}, is finite almost surely (see Lemma~\ref{lemma:Dtail}) and governs whether we are in the `infinite lifetime' regime or `finite lifetime' regime. We hence may or may not assume that~\eqref{ass:A2} is satisfied. 
	
	Deijfen~\cite{Dei10} shows that Assumption~\eqref{ass:mufin} implies that either Assumption~\eqref{ass:A1} or Assumption~\eqref{ass:A2} must be satisfied. We do not include an analysis of the case in which Assumption~\eqref{ass:A1} is not satisfied but Assumption~\eqref{ass:A2} is, which is on-going work.
	
	We then have the following result, where we show under which conditions  a persistent $m$-hub does or  does not exist. 
	
	\begin{theorem}[Infinite lifetime regime]\label{thrm:inflife}
		Consider the PAVD model in Definition~\ref{def:pavd}. Suppose that $b$ and $d$ are such that Assumption~\eqref{ass:A1} is satisfied and let $O\sim \mathrm{Geo}(\P{D=\infty})$. Then, 
		\be \label{eq:Otasconv}
		O_n\longrightarrow O\qquad \mathbb P_\cS\mathrm{-a.s.}
		\ee 
		In particular, a persistent elder exists, in the sense of~\eqref{eq:persOn}, if and only if Assumption~\eqref{ass:A2} is not satisfied.
		
		Suppose Assumption~\eqref{ass:A1} is satisfied and Assumptions~\eqref{ass:A2} and~\eqref{ass:varphi2} are not satisfied. Then, for any $m\in\N$,   a persistent $m$-hub does not exist $ \mathbb P_\cS$-almost surely. 
		
		If  Assumptions~\eqref{ass:A1}, \eqref{ass:varphi2}, and~\eqref{ass:mufin} are satisfied and Assumption~\eqref{ass:A2} is not, then for any $m\in\N$ there exists an almost surely finite random variable $I^{(m)}$ such that 
		\be \label{eq:Inas}
		I_n^{(m)}\longrightarrow I^{(m)}\qquad \mathbb P_\cS\mathrm{-a.s.} 
		\ee 
		That is, a \emph{persistent $m$-hub} exists $ \mathbb P_\cS$-almost surely, in the sense of~\eqref{eq:pershub}. Moreover, for any $m\geq 2$ there exists $n_0\in\N$ such that for all $n\geq n_0$, 
		\be \label{eq:degdistinct}
		\deg_n(I_n^{(1)})>\deg_n(I_n^{(2)})>\cdots>\deg_n(I_n^{(m)})\quad\mathbb P_\cS\mathrm{-a.s}.
		\ee 
	\end{theorem}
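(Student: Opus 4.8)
The plan is to embed the discrete tree process $(T_n)_{n\in\N}$ into a continuous-time Crump--Mode--Jagers (CMJ) branching process, following the methodology sketched in Section~\ref{sec:embed}: each alive vertex $i$ of in-degree $k$ waits an $\mathrm{Exp}(b(k)+d(k))$ time, then either produces a child (with probability $b(k)/(b(k)+d(k))$) or dies. Under this embedding, vertex $i$ corresponds to an individual whose offspring point process has law $\cR$ (up to a time-shift), and the degree of $i$ at the $n$-th discrete step equals the number of children born to $i$ by the time the process has produced $n$ vertices. The key analytic input is the Malthusian machinery for CMJ processes: Assumption~\eqref{ass:mufin} gives a Malthusian parameter $\alpha>0$ solving $\widehat\mu(\alpha)=1$, and the number of children of individual $i$ at real time $t$, suitably normalised by $\e^{\alpha(t-\sigma_i)}$ (where $\sigma_i$ is $i$'s birth time), converges almost surely on survival to $W_i\cdot(\text{const})$, where $(W_i)$ are i.i.d.\ copies of the limiting martingale $W$ associated to $\cR$.

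First I would establish~\eqref{eq:Otasconv}: the oldest alive vertex stabilises because each vertex $i$, once born, survives forever with probability $\P{D=\infty}$ independently, so the label of the first vertex that never dies is geometric with parameter $\P{D=\infty}$; on $\cS$ this vertex exists and $O_n$ is eventually constant and equal to it. The ``if and only if'' then follows since $\P{D=\infty}>0$ exactly when Assumption~\eqref{ass:A2} fails (by Lemma~\ref{lemma:Dtail}). Next, for the non-persistence half (Assumptions~\eqref{ass:A1} holds, \eqref{ass:A2} and~\eqref{ass:varphi2} fail), I would run the argument going back to Dereich--M\"orters: when $\sum 1/(b(i)+d(i))^2=\infty$, the degree-increment process of any fixed vertex, compared via a coupling to that of a slightly younger competitor, fluctuates enough that the difference of (centred, log-scaled) degrees does not converge --- concretely, one shows that for any fixed $i<j$ both alive forever, $\P{\deg_n(j)>\deg_n(i)\text{ i.o.}}=1$, so no vertex can hold the $m$-th largest degree eventually. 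This uses that the relevant random walk (a difference of two independent sums $S_k$) is recurrent precisely when the variance series diverges.

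For the persistence half --- Assumptions~\eqref{ass:A1},~\eqref{ass:varphi2},~\eqref{ass:mufin} hold, \eqref{ass:A2} may or may not hold --- I would argue as follows. Work on the event $\cS$ and additionally condition on the (positive-probability) event that vertices $1,\dots,N$ are all immortal, which happens for $N$ arbitrarily large with positive probability; the immortal vertices form a sub-branching-process with offspring measure $\cR$ restricted to immortal lines, and there are infinitely many of them on $\cS$. For each immortal vertex $i$ with birth time $\sigma_i$, the martingale limit $W_i:=\lim_t \e^{-\alpha(t-\sigma_i)}Z_i(t)$ exists, is strictly positive a.s., and these are i.i.d.\ across distinct immortal lineages. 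The degree ordering at large real time $t$ is governed, to leading exponential order, by $\e^{-\alpha\sigma_i}W_i$ --- the vertex maximising this product among the first few immortal vertices wins. The crucial point is that Assumption~\eqref{ass:varphi2} controls the \emph{fluctuations} of $Z_i(t)$ around $\e^{\alpha(t-\sigma_i)}W_i$: I expect that $\varphi_2$ finite implies the second-moment (or $L^2$) convergence of the martingale is fast enough that $\deg_n(i) = \e^{\alpha(\tau_n-\sigma_i)}W_i(1+o(1))$ with an error term summable over $n$, so that the ranking among immortal vertices by degree eventually coincides with the ranking by $\e^{-\alpha\sigma_i}W_i$ and never changes again --- yielding~\eqref{eq:Inas}. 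Since the $\e^{-\alpha\sigma_i}W_i$ are a.s.\ all distinct (continuous distributions), the top $m$ values are strictly separated, giving~\eqref{eq:degdistinct}. One must also check that no \emph{mortal} vertex ever overtakes: a mortal vertex stops gaining degree after finitely many steps, so its degree is bounded while immortal vertices have degree $\to\infty$ on $\cS$; hence mortals are irrelevant for all large $n$.

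The main obstacle I anticipate is the quantitative ``fluctuation control'' step: upgrading the a.s.\ martingale convergence $Z_i(t)\e^{-\alpha(t-\sigma_i)}\to W_i$ to a rate strong enough that the degree \emph{ranking} stabilises, using precisely Assumption~\eqref{ass:varphi2} rather than a stronger moment condition. This is where the paper's ``more precise formulation of the embedding'' and the extension of Banerjee--Bhamidi's ideas should enter: one likely needs an $L^2$-bound on $Z_i(t)\e^{-\alpha(t-\sigma_i)} - W_i$ of the form $\cO(\e^{-c\,\varphi_1^{-1}(\cdot)})$ or a Borel--Cantelli argument over the discrete times $\tau_n$, together with a careful treatment of the (finitely many) closest competitors. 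A secondary technical point is handling the conditioning on $\cS$ and on immortality of an initial segment simultaneously, and verifying that the limit random variables $I^{(m)}$ are $\mathbb{P}_\cS$-a.s.\ finite --- this follows once one knows the winning vertex is among the first few immortal ones, whose labels are $\mathbb{P}_\cS$-a.s.\ finite by~\eqref{eq:Otasconv} and its analogues.
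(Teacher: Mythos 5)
The first part of your proposal (the persistent elder $O_n\to O\sim\mathrm{Geo}(\P{D=\infty})$, via "label of the first immortal vertex") is correct and matches the paper's reasoning. Your sketch of the non-persistence half is also on the right track--the paper indeed shows that $\sum_i (X(i)'-X(i))$ diverges a.s.\ exactly when $\varphi_2$ diverges and then uses a reverse Borel--Cantelli argument; however, note that ruling out persistent $m$-hubs for $m\geq2$ requires more than showing each pair overtakes each other i.o.: one must take a union bound over all finite exceptional sets $C$ of size $<m$ and, for each such $C$, exhibit a child $uj$ with $j\notin C$ that overtakes $u$ i.o. The paper does exactly this in the proof of Theorem~\ref{thrm:genpers}(a).

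The persistence half, however, rests on a genuine misconception. You assert that the \emph{degree} $\deg_t(i)$ of an individual, normalised by $\e^{\alpha(t-\sigma_i)}$ with $\alpha$ the Malthusian parameter, converges to a multiple of a martingale limit $W_i$, and hence that the degree ranking is eventually the ranking of $\e^{-\alpha\sigma_i}W_i$. This is false. The degree of an individual is the counting function of its own offspring point process $\cR$ and grows roughly like $\varphi_1^{-1}(t-\cB(i))$, not like $\e^{\lambda^*(t-\cB(i))}$. What grows at the Malthusian rate $\lambda^*$ is the \emph{size of the subtree} rooted at $i$, and it is that quantity whose normalisation converges to $W_i$ (cf.\ Proposition~\ref{prop:growth}, where $|\cA^\cont_t|\e^{-\lambda^*t}\to W$). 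For example, with $b(k)=k+1$, $d\equiv0$, one has $\lambda^*=2$ while $\deg_t(\varnothing)$ grows like $\e^{t}$, so $\deg_n(\varnothing)\approx\sqrt n$, not $n W\e^{-\lambda^*\sigma}$. The relevant martingale for degree comparisons is not $W$ but $M_u:=\lim_{k\to\infty}(S_k^{(u)}-\varphi_1(k))$, whose a.s.\ convergence is exactly what Assumption~\eqref{ass:varphi2} guarantees; comparing $\cB(u)-M_u$ across individuals is the analogue of what the paper tracks via $\cB(uk)-\cB(vk)=\cB(u)-\cB(v)+\sum_{i\le k}(X(ui)-X(vi))$ in Lemma~\ref{lemma:tauwin}(b). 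Even with this correction, your argument would still have a gap: pointwise convergence of $M_u(t)$ only stabilises the ranking on a \emph{fixed finite} set of competitors, and you have not shown that only finitely many individuals can ever contend for the top $m$ positions. That finiteness is a separate and essential step in the paper (Propositions~\ref{prop:catch} and~\ref{prop:P}, establishing $|\cP_m|<\infty$ a.s.), and it is where Assumption~\eqref{ass:mufin}--in the sharper form $\widehat\mu(\lambda)<1$--enters, through an exponential Chernoff bound controlling the number of late-born individuals that can ever catch up to the root. Your proposal never isolates this finiteness step, so as written the persistence half does not go through.
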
 
	
	We summarise the results of Theorem~\ref{thrm:inflife} in Table~\ref{table:summary} and provide the proof in Section~\ref{sec:inflife}.
	
	\begin{remark}
		Theorem~\ref{thrm:inflife} \emph{generalises} Theorem $4.9$ in~\cite{BanBha21} and part of Theorem $2.3$ in~\cite{Iyer24}, which concern themselves with persistent $m$-hubs and the persistent $1$-hub in the case $d\equiv 0$ (i.e.\ preferential attachment trees without death), respectively. It also extends Theorem $2.3$ in~\cite{HeyLod25}, which proves the non-existence of a persistent $1$-hub in the infinite lifetime regime under additional assumptions, such as that $b(i)$ needs to diverge to infinity with $i$.  \ensymboldremark
	\end{remark}
	
	\begin{remark}
		Assumption~\eqref{ass:mufin} is a sufficient condition for the results in~\eqref{eq:Inas} and~\eqref{eq:degdistinct}. It can be readily verified that this assumption is satisfied for $b$ such that 
		\be 
		\limsup_{i\to\infty}\frac{b(i)}{i}<\infty,
		\ee 
		irrespective of $d$. We additionally need that $d$ is such that Assumption~\eqref{ass:A2} is not satisfied to be in the infinite lifetime regime, so that either $d$ converges to zero sufficiently fast or is non-zero only at a sufficiently sparse sub-sequence.
		
		Though crucial in our proofs, we do not believe this assumption to be necessary, so that the results in~\eqref{eq:Inas} and~\eqref{eq:degdistinct} should hold for a wider class of models. Indeed, Iyer provides such results in the case $m=1$ and $d\equiv 0$ in~\cite{Iyer24}, though a different proof idea is used.\ensymboldremark
	\end{remark}
	
	\begin{table}[h]
		\centering 
		\begin{tabular}{|l|l|}
			\hline 
			\textbf{Result} & \textbf{Assumptions}\\ 
			\hline 
			Persistent elder (\eqref{eq:Otasconv}) & \eqref{ass:A1} and \emph{not}~\eqref{ass:A2}\\ 
			\hline No persistent $m$-hub for any $m\in\N$ & \eqref{ass:A1}, \emph{not}~\eqref{ass:A2}, and \emph{not}~\eqref{ass:varphi2}\\
			\hline 
			Persistent $m$-hub for any $m\in\N$ (\eqref{eq:Inas} \& \eqref{eq:degdistinct}) & \eqref{ass:mufin}, \eqref{ass:A1}, \emph{not}~\eqref{ass:A2}, and~\eqref{ass:varphi2}\\
			\hline
		\end{tabular} 
		\caption{A summary of the results in Theorem~\ref{thrm:inflife} and the required assumptions.}\label{table:summary}
	\end{table}
	
	\textbf{Finite lifetime. } We now present our result for the PAVD model when vertices die after a finite number of steps almost surely. The results in Theorem~\ref{thrm:inflife} are qualitative, as they state whether certain random variables converge or diverge. Our result in finite lifetime, on the other hand, is quantitative. It shows that certain random variables grow at the same rate (in the sense of~\eqref{eq:pers}).  To this end, we need some additional definitions and assumptions to more finely control the behaviour of the model. First, we define
	\be \label{eq:R}
	R\coloneq \inf_{i\in\N_0}(b(i)+d(i)), 
	\ee
	and assume that there exists $d^*\in[0,R)$, such that
	\be \label{eq:dconv}
	\lim_{i\to\infty} d(i)=d^*, 
	\ee 
	and such that Assumption~\eqref{ass:A2} is satisfied. Here, Assumption~\eqref{ass:A2} ensures that vertices die after a finite number of steps almost surely. Furthermore, it is crucial that the limit $d^*$ is strictly smaller than $R$. Indeed, the `finite lifetime' regime can be divided into two further sub-regimes, called the `rich are old' `rich die young' and  regime, based on whether $d^*<R$ or $d^*>R$ (when assuming that such a $d^*$ exists), respectively, see Table~\ref{table:regimes}. The recent work of Heydenreich and the author~\cite{HeyLod25} studies lack of persistence in these regime. They show that, under mild conditions, persistence \emph{never} occurs in the `rich die young' regime, see~\cite[Theorem $2.10$]{HeyLod25}, and that lack of persistence occurs in the `rich are old' regime when Assumption~\eqref{ass:varphi2} is not satisfied. Our focus here thus lies on the `rich are old' regime, where we show that persistence does occur when Assumption~\eqref{ass:varphi2} is satisfied (and additional technical assumptions).

	\begin{table}[h]
		\centering
		\begin{tabular}{|c|c|c|}
			\hline 
			\textbf{Infinite lifetime} & \multicolumn{2}{|c|}{\textbf{Finite lifetime} }\\ 
			\hline 
			& \textbf{Rich are old} & \textbf{Rich die young} \\ 
			\hline 
			\eqref{ass:A1} and \emph{not}~\eqref{ass:A2} & \eqref{ass:A1}, \eqref{ass:A2}, and~\eqref{eq:dconv}, $d^*<R$ & \eqref{ass:A1}, \eqref{ass:A2}, and~\eqref{eq:dconv}, $d^*>R$\\ 
			\hline 
		\end{tabular}
		\caption{The different regimes that can be observed in the PAVD model and the main assumptions that are required in each regime. The `infinite lifetime' regime in studied in Theorem~\ref{thrm:inflife}, and `finite lifetime' regime, and in particular the `rich are old' regime is studied in Theorem~\ref{thrm:conv}. The `rich die young' regime is studied in~\cite{HeyLod25}.}\label{table:regimes}
	\end{table}
	
	We also define
	\be \label{eq:alpha}
	\alpha(k)\coloneq \rho_1(k)-d^*\varphi_1(k)=\sum_{i=0}^{k-1}\frac{d(i)-d^*}{b(i)+d(i)} \qquad\text{for } k\in \N. 
	\ee
	Note that $\alpha\equiv \rho_1$ when $d^*=0$, that is, when $d$ converges to zero. We extend the domain of $\varphi_1,\varphi_2,\rho_1$, as in~\eqref{eq:seqs1}, and $\alpha$ to $\R_+$ by linear interpolation so that, for example, $\varphi_1(t)=\int_0^t1/(b(\lfloor x\rfloor)+d(\lfloor x\rfloor))\,\dd x$ for $t\geq 0$. In particular, this implies that $\varphi_1$ and $\varphi_2$ are strictly increasing and thus invertible.   We then define 
	\be \label{eq:Ks}
	\cK_\alpha(t)\coloneq \alpha\big(\varphi_1^{-1}(t)\big) \qquad \text{for }t\geq 0. 
	\ee  
	Heuristically, $\cK_\alpha(t)$ quantifies the difference between $\rho_1(\varphi_1^{-1}(t))$ and $d^*t$, and allows us to precisely quantify probabilities of the form $\P{D\geq \varphi_1^{-1}(t)}$ (see Lemma~\ref{lemma:Dtail}). We then recall $\widehat \mu$ from~\eqref{eq:muhat}. Assumption~\eqref{ass:mufin} is not sufficiently strong for the proof techniques used in the `finite lifetime' regime. Rather, we introduce
	\be \label{eq:undlambda}
	\underline \lambda\coloneq \inf\{\lambda>0: \widehat \mu(\lambda)<\infty\}, 
	\ee 
	and assume that 
	\be \tag{Ma}\label{ass:C1}
	\widehat \mu(\lambda^*)=1\text{ has a solution }\lambda^*\in(0,\infty)\qquad\text{and}\qquad\  
	\lambda^* >\underline \lambda.
	\ee
	The parameter $\lambda^*$ is known as the \emph{Malthusian parameter}, and clearly Assumption~\eqref{ass:C1} is stronger compared to Assumption~\eqref{ass:mufin}. 
	
	Finally, we have the following technical assumption for the function $\cK_\alpha$, as in~\eqref{eq:Ks}.
	\begin{assKalpha}\label{ass:Kalpha}
		For $\lambda^*>0,d^*\geq 0$, there exists a function $r\colon(0,\infty)\to\R_+$ such that
		\be\label{ass:Kalphas}
		\cK_\alpha\big(\tfrac{\lambda^*}{\lambda^*+d^*}t-\tfrac{1}{\lambda^*+d^*}\cK_\alpha(r(t))\big)-\cK_\alpha(r(t))=\cO(1).
		\ee 
	\end{assKalpha}
	
	\begin{remark} 
		Assumption~\hyperref[ass:Kalpha]{$\cK_\alpha$}  can be interpreted as an assumption for $\rho_1\circ \varphi_1^{-1}$ in the case that $d$ converges to zero (as $\alpha\equiv \rho_1$ and thus $\cK_\alpha\equiv \rho_1\circ \varphi_1^{-1}$ in that case).\ensymboldremark
	\end{remark}
	
	\begin{remark}\label{rem:Kalpha}
		Assumption~\hyperref[ass:Kalpha]{$\cK_\alpha$} is satisfied when $r(t)=\frac{\lambda^*}{\lambda^*+d^*}t-x(t)$, where $x(t)$ satisfies $x(t)=\frac{1}{\lambda^*+d^*}\cK_\alpha(\frac{\lambda^*}{\lambda^*+d^*}t-x(t))+\cO(1)$.\ensymboldremark
	\end{remark}
	
	We then have the following result, where we provide conditions under which persistence occurs. 
	
	\begin{theorem}[Finite lifetime regime]\label{thrm:conv}
		Consider the PAVD model, as in Definition~\ref{def:pavd}.  Suppose that $b$ and $d$ are such that Assumptions~\eqref{ass:A1}, \eqref{ass:C1}, \eqref{ass:A2}, and~\eqref{ass:varphi2} are satisfied. Recall $R$ from~\eqref{eq:R} and suppose that $d$ satisfies~\eqref{eq:dconv} with $d^*\in[0,R)$, and let Assumption~\hyperref[ass:Kalpha]{$\cK_\alpha$} be satisfied. Then,
		\begin{align}
			\Big(\log O_n-\frac{d^*}{\lambda^*+d^*}\log n-\frac{\lambda^*}{\lambda^*+d^*}\cK_\alpha\big(r\big(\tfrac{1}{\lambda^*}\log n\big)\big)\Big)_{n\in\N},\label{eq:Onconv}
			\intertext{and} 
			\Big(\log I_n^{(1)}-\frac{d^*}{\lambda^*+d^*}\log  n-\frac{\lambda^*}{\lambda^*+d^*}\cK_\alpha \big(r\big(\tfrac{1}{\lambda^*}\log n\big)\big)\Big)_{n\in\N}, \label{eq:Inconv}
			\intertext{and} 
			\Big(\varphi_1(\max_{v\in \cA_n}\deg_n(v))-\frac{1}{\lambda^*+d^*}\log n+\frac{1}{\lambda^*+d^*}\cK_\alpha\big(r\big(\tfrac{1}{\lambda^*}\log n\big)\big)\Big)_{n\in\N}.\label{eq:maxdegconv}
		\end{align} 
		are tight sequences of random variables with respect to $\mathbb P_\cS$. In particular, persistence occurs in the sense of~\eqref{eq:pers}. 
	\end{theorem}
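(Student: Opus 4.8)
The plan is to carry the whole analysis through the continuous-time embedding of Section~\ref{sec:embed}, in which $T_n$ is realised as the genealogy of a Crump--Mode--Jagers branching process $(\mathrm{BP}_t)_{t\geq 0}$: each individual reproduces according to an independent copy of the point process $\cR$ from~\eqref{eq:D} and is removed from the alive set when that process terminates, $T_n$ is obtained by stopping $\mathrm{BP}$ at the time $\tau_n$ of its $(n-1)$-st birth-or-death event, the vertex with label $k$ is the $k$-th individual born (at time $\sigma_k$) with lifetime $\ell_k$, and the survival event $\cS$ corresponds to non-extinction of $\mathrm{BP}$. Under Assumption~\eqref{ass:C1} the process is Malthusian with parameter $\lambda^*$, and the strict inequality $\lambda^*>\underline\lambda$ places it in the classical regime where Nerman's strong law for general branching processes counted with random characteristics applies. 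Applied to the characteristics counting all individuals ever born and all deaths, it gives that $\e^{-\lambda^* t}$ times each of these counts converges $\mathbb P_\cS$-a.s.\ to a positive multiple of the Malthusian martingale limit $W$; consequently $\lambda^*\tau_n-\log n$ and $\lambda^*\sigma_k-\log k$ converge $\mathbb P_\cS$-a.s.\ (the latter uniformly in $k$), so throughout I may replace $\log n$ by $\lambda^*\tau_n$ and $\sigma_k$ by $\tfrac1{\lambda^*}\log k$ up to $\mathbb P_\cS$-tight errors.

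Two uniform deterministic tail estimates drive the argument. First, since $S_k=\sum_{i<k}E_i$ is a sum of independent exponentials with rates $b(i)+d(i)\geq R$ and $\Var(S_k)=\varphi_2(k)\leq\varphi_2(\infty)<\infty$ by Assumption~\eqref{ass:varphi2}, a Chernoff bound gives a sub-Gaussian lower-tail and a Bernstein-type upper-tail concentration of $S_k$ around $\varphi_1(k)$, uniformly in $k$; this is where Assumption~\eqref{ass:varphi2} is essential. Second, Lemma~\ref{lemma:Dtail} gives $\mathbb P(D\geq\varphi_1^{-1}(a))=\e^{-d^*a-\cK_\alpha(a)+\cO(1)}$, and combining it with the first estimate the lifetime $\ell=S_{D+1}$ satisfies $\mathbb P(\ell>a)=\e^{-d^*a-\cK_\alpha(a)+\cO(1)}$. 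Using the independence of the $E_i$ and $B_i$ in~\eqref{eq:D}, the event that an individual of age $a$ is alive with degree at least $k$, namely $\{S_k\leq a<S_{D+1}\}$, has probability at most $\mathbb P(S_k\leq a)\,\mathbb P(D\geq k-\cO(1))$ and comparable to $\e^{-d^*a-\cK_\alpha(a)}$ when $a\geq\varphi_1(k)$. Feeding this into the Nerman-type evaluation of the expectation of the number $\Psi_{\geq k}(\tau_n)$ of alive individuals of degree at least $k$ in $T_n$ — the delicate point being uniformity of the branching-process asymptotics over thresholds $k=k(n)\to\infty$ and at the random time $\tau_n$ — produces
\[
\mathbb E_\cS\big[\Psi_{\geq k}(\tau_n)\big]\;\asymp\;n\,\e^{-(\lambda^*+d^*)\varphi_1(k)-\cK_\alpha(\varphi_1(k))}.
\]
Equating the right-hand side to a constant gives $\varphi_1(k)=\tfrac1{\lambda^*+d^*}\log n-\tfrac1{\lambda^*+d^*}\cK_\alpha(\varphi_1(k))+\cO(1)$, which is exactly the fixed-point equation defining $r$ in Assumption~\hyperref[ass:Kalpha]{$\cK_\alpha$} at $t=\tfrac1{\lambda^*}\log n$; that assumption is precisely what pins its solution down to $\cO(1)$, so the critical threshold $k^\star$ satisfies $\varphi_1(k^\star)=r(\tfrac1{\lambda^*}\log n)+\cO(1)$. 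A Markov bound at $k^\star$ plus a large constant controls the upper tail of $\varphi_1(\max_{v\in\cA_n}\deg_n(v))$, and a second-moment argument (in the spirit of the optimal-window analysis of Banerjee--Bhamidi~\cite{BanBha21}) at $k^\star$ minus a large constant controls the lower tail, which together give tightness of~\eqref{eq:maxdegconv}. I expect this second-moment estimate — bounding the covariance of $\Psi_{\geq k}$, which is dominated by ancestor/descendant pairs of high-degree alive individuals, by a lower-order term — together with the threshold-uniform control of the characteristics at $\tau_n$ to be the main obstacle.

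For the oldest alive individual, $O_n=\min\{k\geq1:\sigma_k+\ell_k>\tau_n\}$ is non-decreasing, so it suffices to sandwich it. A union bound over $k\leq K$, using $\mathbb P(\ell>\tau_n-\sigma_k)\asymp (k/n)^{d^*/\lambda^*}\e^{-\cK_\alpha(\tfrac1{\lambda^*}\log(n/k))}$, shows $\mathbb P_\cS(\log O_n<C_n-M)$ is exponentially small in $M$, where
\[
C_n:=\tfrac{d^*}{\lambda^*+d^*}\log n+\tfrac{\lambda^*}{\lambda^*+d^*}\cK_\alpha\big(r(\tfrac1{\lambda^*}\log n)\big);
\]
conversely, since the $\ell_k$ are independent and the $\sigma_k$ essentially deterministic, the number of alive vertices with label in $[\e^{C_n-M},\e^{C_n+M}]$ has large mean and comparable variance, so by Chebyshev $\mathbb P_\cS(\log O_n>C_n+M)$ is also exponentially small in $M$; this gives tightness of~\eqref{eq:Onconv}. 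For~\eqref{eq:Inconv}, the lower bound is immediate from $O_n\leq I_n^{(1)}$ together with~\eqref{eq:Onconv}, while for the upper bound one rules out young vertices as degree-maximisers: a vertex with label exceeding $\e^{C_n+M}$ has age at most $\tau_n-\sigma_{\e^{C_n+M}}=r(\tfrac1{\lambda^*}\log n)-M/\lambda^*+\cO_{\mathbb P_\cS}(1)$, and a first-moment bound over all such vertices, combining the sub-Gaussian lower tail of $S_k$ with the tail of $D$, shows that the largest degree attained by a young alive vertex is below $\varphi_1^{-1}\big(r(\tfrac1{\lambda^*}\log n)-\Omega(\sqrt M)\big)$ with probability close to one, hence strictly below $\max_{v\in\cA_n}\deg_n(v)$ once $M$ is a large constant, by the lower tail already obtained for~\eqref{eq:maxdegconv}. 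Since~\eqref{eq:Onconv} and~\eqref{eq:Inconv} share the centering $C_n$, the difference $\log I_n^{(1)}-\log O_n$ is $\mathbb P_\cS$-tight, which is exactly persistence~\eqref{eq:pers}.
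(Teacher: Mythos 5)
Your proposal follows the same overall strategy as the paper: the continuous-time embedding, the two-sided lifetime tail $\log\P{L>t}=-d^*t-\cK_\alpha(t)+\cO(1)$ driven by Assumption~\eqref{ass:varphi2} (this is Lemma~\ref{lemma:lifetime2nd}), the Malthusian time change justified via Nerman/Corollary~\ref{cor:growth}, the fixed-point reading of Assumption~\hyperref[ass:Kalpha]{$\cK_\alpha$} for the critical degree threshold, a union/Markov bound for the upper-tail estimates, and a ``there are many young-enough candidates'' argument for the lower-tail estimates. Your derivation of the centering $C_n$ and of the critical level $\varphi_1(k^\star)=r(\tfrac1{\lambda^*}\log n)+\cO(1)$ from the first-moment balance matches the paper's.

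The one genuine difference is in how you obtain the existential lower bounds (the statements that $\max\deg$ is not too small and $O_n$ is not too large). You propose a second-moment / Chebyshev argument on the global count $\Psi_{\geq k}(\tau_n)$, in the style of Banerjee--Bhamidi, and correctly flag the covariance over ancestor--descendant pairs and the random time $\tau_n$ as the hard part. The paper sidesteps both issues: everything is proved at deterministic times $t$ uniformly over a window $[t-A,t+A]$ (so the random $\tau_n$ is handled only once, at the very end via~\eqref{eq:taubound}), and the existence step is run by conditioning on $\cF_{\tilde u_t}$ together with the event that at least $m\asymp M^{-1}\e^{\lambda^* u_t}$ individuals are born in a short interval $(\tilde u_t,u_t)$, after which their offspring sizes and lifetimes are mutually independent by construction and independent of $\cF_{\tilde u_t}$; this gives $(1-p)^m\leq\e^{-mp}$ directly with no covariance estimate at all, and in fact doubly-exponential (rather than merely polynomial Chebyshev) decay in the slack constant. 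Your route can also be made to work, but the paper's conditioning trick is cleaner and avoids the obstacle you single out. One more minor point: the $\Omega(\sqrt M)$ saving you claim in the step ruling out young degree-maximisers is not quite what the paper's Lemma~\ref{lemma:oldsmalloff} gives --- there the Chernoff exponent is quadratic in a fixed spatial offset $C_2$, chosen once as a constant depending only on $(\eps,A,C_1)$, not scaling with the slack $M$; the savings that beat the union over $\asymp n$ young individuals come from the quadratic-in-$C_2$ exponent versus the linear growth of the number of birth intervals, without any $\sqrt M$ appearing.
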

	
	\begin{remark}
		The results in~\eqref{eq:Inconv} and~\eqref{eq:maxdegconv} hold for \emph{any} $d^*\geq 0$. The restriction $d^*<R$ is necessary only for~\eqref{eq:Onconv}, caused by the `rich are old' and `rich die young' sub-regimes (as in Table~\ref{table:regimes}).\ensymboldremark
	\end{remark}
	
	\begin{remark}\label{rem:incldead}
		The results presented in~\eqref{eq:Inconv} and~\eqref{eq:maxdegconv} remain true if we consider the label of the vertex with the largest degree among \emph{all} vertices (i.e.\ both alive and dead). This can be verified by modifying parts of the proofs in later sections. It shows that vertices can obtain a large degree only if they also manage to stay alive for a long time, and do not get lucky due to another vertex with a possibly even larger degree dying.\ensymboldremark
	\end{remark} 
	
	The crucial assumption in both Theorem~\ref{thrm:inflife} and~\ref{thrm:conv} is Assumption~\eqref{ass:varphi2}. Indeed, it follows from the first part of Theorem~\ref{thrm:inflife} that no persistent $m$-hub exists $\mathbb P_\cS$-almost surely when Assumption~\eqref{ass:varphi2} is not satisfied. Also, Theorem $2.10$ in~\cite{HeyLod25} shows that \emph{lack of persistence} in the sense of~\eqref{eq:nopers} occurs when Assumption~\eqref{ass:varphi2} is \emph{not} satisfied. We note that this condition for persistence and the existence of persistent $m$-hub is similar to that for preferential attachment without vertex death, as in~\eqref{eq:sumcond}.
	
	\textbf{Discussion,  Examples, and Related work. } The results presented in this section identify the behaviour of the PAVD model in the `infinite lifetime' and `finite lifetime' with regards to persistence of the maximum degree (or $m^{\mathrm{th}}$-largest degree, in general). In the former regime, we require fairly mild conditions only. In the `finite lifetime' regime, stronger assumptions are required compared to the `infinite lifetime' regime. This is due to the fact that we need very precise asymptotic expansions of the random variables $I_n^{(1)}$ and $O_n$, which in this regime no longer converge without rescaling. In particular, we require control over the growth rate of a continuous-time branching process that we analyse to prove the results in Theorem~\ref{thrm:conv} (see Section~\ref{sec:embed} for more details). In Section~\ref{sec:disc} we discuss some open problems related to this regime. 
	
	\emph{Examples.} In the `finite lifetime' regime, we can further distinguish between the `rich are old' and `rich die young' regimes. Whether the PAVD model is in the `rich are old' or `rich die young' regime subtly depends on the sequences $d$ and $b$. By changing only a small number of values in either sequences, it is  possible to go from one regime to the other. For example, consider the three models
	\be \ba \label{eq:ex}
	({}&\text{RaO}) & b&=(1,2,3,\ldots)&&\text{and } d=(1,2,\tfrac32,\tfrac32,\ldots),\\ 	({}&\text{RdY}1)& b&=(1,2,3,\ldots)&&\text{and } d=(\tfrac14,2,\tfrac32,\tfrac32,\ldots),\\
	({}&\text{RdY}2) & b&=(\tfrac14,2,3,\ldots)&&\text{and } d=(1,2,\tfrac32,\tfrac32,\ldots).
	\ea\ee 
	In the first example we have $R=2$ and $d^*=\frac32$, so that this case belongs to the `rich are old' regime. In the second and third line we have changed one instance of $d$ and $b$, respectively, so that $R=\frac54$ and $d^*=\frac32$. These cases thus belong to the `rich die young' regime.	
	
	\emph{Assumptions.} Let us then provide some examples that satisfy the assumptions used in the two main theorems. First, we discuss Assumption~\hyperref[ass:Kalpha]{$\cK_\alpha$}. Here, with $c\in(0,\lambda^*/(\lambda^*+d^*))$, set 
	\be 
	\eps_t\coloneq \sup_{i\geq ct}|d(i)-d^*|. 
	\ee 
	Assumption~\hyperref[ass:Kalpha]{$\cK_\alpha$} is then satisfied if there exists $k\in\N$ such that 
	\be\label{eq:eps} 
	\eps_t^k=\cO\Big(\frac{1}{\cK_\alpha(t)}\Big).
	\ee 
	Namely, using Remark~\ref{rem:Kalpha}, set 
	\be 
	x_1(t)\coloneq \frac{1}{\lambda^*+d^*}\cK_\alpha\big(\tfrac{\lambda^*}{\lambda^*+d^*}t\big)\qquad\text{and}\qquad x_i(t)\coloneq \frac{1}{\lambda^*+d^*}\cK\big(\tfrac{\lambda^*}{\lambda^*+d^*}t-x_{i-1}(t)\big)\qquad \text{for }i>1. 
	\ee 
	Then, take $x(t)\coloneq x_k(t)$ and $r(t)\coloneq \frac{\lambda^*}{\lambda^*+d^*}t-x(t)$. It can be shown, using (the proof of) Lemma~\ref{lemma:func}$(c)$ that this choice of $r$ satisfies Assumption~\hyperref[ass:Kalpha]{$\cK_\alpha$}. We observe that $\cK_\alpha(t)=o(t)$ by Lemma~\ref{lemma:func}$(c)$, so that $\eps_t=t^{-\beta}$ for some $\beta>0$ satisfies~\eqref{eq:eps} for any $k>\beta^{-1}$. That is, as long as $d$ converges to $d^*$ at a polynomial rate, Assumption~\hyperref[ass:Kalpha]{$\cK_\alpha$} is satisfied. We expect that much weaker conditions on $d$ are sufficient as well, when making more assumptions about the regularity of $b$.
	
	It can be checked with relative ease that Assumption~\eqref{ass:C1} is satisfied when 
	\begin{itemize} 
		\item $\E{D}\in(1,\infty)$, or
		\item $\E{D}=\infty$, $b$ tends to infinity,   $C_1\coloneq \limsup_{i\to\infty}\frac{b(i)}{i}<\infty$, and $\widehat \mu(C_1)>1$.
	\end{itemize} 
	Finally, Assumption~\eqref{ass:mufin} is implied by Assumption~\eqref{ass:C1}. It thus requires fewer conditions on the sequences $b$ and $d$. Indeed, it is satisfied when 
	\begin{itemize} 
		\item $\E{D}\in(1,\infty)$, or
		\item $\E{D}=\infty$,   $\limsup_{i\to\infty}\frac{b(i)}{i}<\infty$.
	\end{itemize}
	
	\emph{Related work. } The results in Theorem~\ref{thrm:inflife} expand and generalise recent work of Iyer in~\cite{Iyer24} in which he developed a novel methodology to study the (non-)existence of persistent hubs in evolving random trees, based on the summability of infinite sums of symmetric random variables. The proof of Theorem~\ref{thrm:inflife} is inspired by this methodology, but generalises it to include vertex death, expands the results to persistent $m$-hubs for $m>1$, and includes the result regarding the distinct degrees of $m$-hubs in~\eqref{eq:degdistinct}. The results on persistent $m$-hubs with $m>1$, as well as the result in~\eqref{eq:degdistinct} are novel and also generalise earlier work of Banerjee and Bhamidi for preferential attachment trees~\cite[Theorem $4.9$ and Remark $4.11$]{BanBha21}. 
	
	Recent work of Heydenreich and the author in~\cite{HeyLod25} also studies the PAVD model, but focusses on lack of persistence, in the sense of~\eqref{eq:nopers}. This complements the results presented here and combined these two works provide a clear understanding of the different regimes that can be observed, see Table~\ref{table:regimes}. Further extensions can still be made, however, in particular by not requiring that the death sequence $d$ converges to a constant $d^*\geq 0$. We discuss this in more detail in Section~\ref{sec:disc}. 
	
	\section{Methodology: Embedding in CMJ branching process}\label{sec:embed}
	
	To prove the results presented in Section~\ref{sec:results}, we make use of a technique where one `embeds' a random discrete process into a Crump--Mode-Jagers (CMJ) continuous-time branching process. The  random discrete structure is then equal in distribution to the CMJ branching process, when viewed at certain stopping times, whilst the CMJ branching process in continuous time provides more analytical advantages compared to the discrete process. 
	
	\textbf{General CMJ branching processes. } We first introduce some definitions and notation to facilitate the introduction of the CMJ branching process that relates to the discrete PAVD model. We let $\cU_\infty$ denote the Ulam-Harris tree. That is,
	\be\label{eq:UH}
	\cU_\infty\coloneq \{\varnothing\}\cup\bigcup_{k=1}^\infty \N^k.
	\ee 
	For $v=v_1v_2\cdots v_k\in \cU_\infty$, we view $v$ as the $v_k^{\th}$ child of $v_1\cdots v_{k-1}$ (where $v_1\cdots v_{k-1}$ denotes $\varnothing$ when $k=1$). In particular, we view $j\in \N$ as the $j^{\th}$ child of $\varnothing$. We assign to each individual $v\in\cU_\infty$  a sequence of non-negative \emph{inter-birth time} random variables $(X(vi))_{i\in \N}$ and an \emph{offspring} random variable $D^{(v)}$, which for each $v\in \cU_\infty$ are i.i.d.\ copies of some random variables $(X(i))_{i\in\N}$ and $D$, respectively. We assume throughout that $X(i)<\infty$ for all $i\in\N$, almost surely. 
	
	Given the inter-birth times of an individual $v$, we define its \emph{lifetime} as 
	\be \label{eq:lifetime}
	L^{(v)}\coloneq \sum_{i=1}^{D^{(v)}+1}X(vi).
	\ee 
	Furthermore, we can recursively construct all \emph{birth times} $\cB(v)$ by setting $\cB(\varnothing)\coloneq 0$ and
	\be 
	\cB(vj)\coloneq \begin{cases} 
		\cB(v)+\sum_{i=1}^j X(vi)&\mbox{for }v\in \cU_\infty, j\in\{1,\ldots, D^{(v)}\},\\ 
		\infty &\mbox{for }v\in \cU_\infty, j\in\{D^{(v)}+1,\ldots\}.
	\end{cases}
	\ee  
	Given that $v$ is born in finite time, each child $v1,\ldots, vD^{(v)}$ is born finite time and children $vj$ for $j>D^{(v)}$ are never born (as their birth-time is infinite). The individual $v$ then dies after $L^{(v)}$ time. If $v$ is never born (i.e.\ $\cB(v)=\infty$) then it is clear that all descendants of $v$ are never born either.
	
	We can then construct a branching process $(\bp(t))_{t\geq 0}$ by setting \be \label{eq:bpdef}
	\bp(t)\coloneq \{u\in \cU_\infty: \cB(u)\leq t\}\qquad \text{for }t\geq 0. 
	\ee 
	It is clear that individuals $v=v_1\cdots v_m$ with $v_1,\ldots, v_m\in\N$ and $m\in\N_0$ are born in the process $\bp$ if and only if $v_i\leq D^{(v_1\cdots v_{i-1})}$ for all $i\in[m]$. All other individuals are never born. We further define 
	\be \label{eq:At}
	\cA^\cont_t\coloneq \{u\in \cU_\infty: \cB(u) \leq t<\cB(u)+L^{(u)}\}, \qquad \text{ for }t\geq 0, 
	\ee 
	as the set of alive individuals at time $t$. We can view this as the continuous-time counterpart of $\cA_n$, as defined in Definition~\ref{def:pavd}. We abuse notation to write 
	\be 
	\cS=\{\forall t\geq 0: \cA_t^\cont\neq\emptyset\}
	\ee 
	as the event that the branching process $\bp$ survives.

	\textbf{Continuous-time embedding of PAVD. }  We now provide a specific case for the general definition above that we coin the continuous-time preferential attachment with vertex death branching process (CTPAVD), which is directly related to the PAVD model. Recall the random variables $D$, $(E_i)_{i\in\N_0}$, and $(S_k)_{k\in\N}$ from~\eqref{eq:D}. For each individual $v\in \cU_\infty$, we let $D^{(v)}$ be an i.i.d.\ copy of $D$, and set $X(vi)=E_{i-1}^{(v)}$, where $E^{(v)}_{i-1}$ is an i.i.d.\ copy of $E_{i-1}$ for each $i\in\N$. With $S_k^{(v)}\coloneq \sum_{i=0}^{k-1} E^{(v)}_i$, we can write for each $v\in \cU_\infty$ and $k\in\N$,
	\be \label{eq:birthlifetime}
	\sum_{i=1}^k X(vi)=S_k^{(v)}\qquad\text{and, in particular,}\qquad L^{(v)}\overset{\mathrm{d}}{=} S_{D+1}^{(v)}.
	\ee 
	Let us introduce for $v\in\cU_\infty$ the characteristics
	\be 
	\chi^{(v)}_{\mathrm b}(t)\coloneq \ind_{\{t\geq 0\}}, \qquad\text{and}\qquad  \chi^{(v)}_{\mathrm a}(t)\coloneq \ind_{\{0\leq t<L^{(v)}\}},
	\ee 
	and define, for $\square\in\{\mathrm a,\mathrm b\}$,
	\be 
	Z_{\square}(t)\coloneq \sum_{v\in\cU_\infty}\chi^{(v)}_{\square}(t-\cB(v)).
	\ee 
	$Z_\mathrm{a}$ and $Z_{\mathrm b}$ `count' the branching process $\bp$ according to the characteristics $\chi_{\mathrm a}$ and $\chi_{\mathrm b}$, respectively. Here, $Z_{\mathrm a}(t)$ denotes the number of alive individuals in $\bp(t)$ and $Z_{\mathrm b}(t)$ denotes the total number of individuals born in $\bp(t)$. We then set
	\be \label{eq:Nt}
	N(t)\coloneq Z^{\mathrm b}_t+(Z^{\mathrm b}_t-Z^{\mathrm a}_t) \qquad\text{for }t\geq0,
	\ee 
	as the number of births and deaths that have occurred in the branching process up to time $t$. We  let $(\tau_n)_{n\in\N}$ be stopping times, defined  as
	\be\label{eq:taun} 
	\tau_n\coloneq \inf\{t\geq 0: N(t)=n\}, \qquad n\in\N. 
	\ee 
	That is, $\tau_n$ denotes the time at which the $n^{\th}$ birth or death event occurs. We then have the following correspondence between the discrete tree process $(T_n, \cA_n)_{n\in\N}$ and the CMJ branching process $(\bp(\tau_n), \cA_{\tau_n}^{\cont})_{n\in\N}$ viewed at the sequence of stopping times.
	
	\begin{proposition}[Embedding of PAVD in CTPAVD]\label{prop:embed}
		Let $(T_n,\cA_n)_{n\in\N}$ be the sequence of trees and sets of alive vertices of a PAVD model and $(\bp(t),\cA_t^\cont)_{t\geq 0}$ be a CTPAVD model and the set of alive individuals, both constructed with the same birth and death sequences $b$ and $d$, respectively. Then, 
		\be 
		(T_n,\cA_n)_{n\in\N}\overset{\mathrm{d}}{=} (\bp(\tau_n),\cA^\cont_{\tau_n})_{n\in\N}. 
		\ee  
	\end{proposition}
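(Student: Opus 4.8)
The plan is to reduce the claim to showing that $(\bp(\tau_n),\cA^\cont_{\tau_n})_{n\in\N}$ is a discrete-time Markov chain with the same initial state and one-step transition kernel as the chain $(T_n,\cA_n)_{n\in\N}$, which is by construction (Definition~\ref{def:pavd}) a Markov chain on the space of finite rooted labelled trees with a distinguished alive subset; the equality in distribution of the two sequences then follows. State spaces are matched by relabelling each $u\in\bp(\tau_n)$ by $N(\cB(u))$, the index of the event at which $u$ is born: since $N(0)=1$, the root $\varnothing$ receives label $1$ (and $\tau_1=0$, so $(\bp(\tau_1),\cA^\cont_{\tau_1})=(\{\varnothing\},\{\varnothing\})$ corresponds to $(T_1,\cA_1)$), while the individual created at the event at time $\tau_{k+1}$, when that event is a birth, receives label $k+1$, exactly as in Definition~\ref{def:pavd}. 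This relabelling is injective (an event triggers at most one birth) and sends every non-root vertex to a label strictly larger than that of its parent (a child is born after its parent), hence realises an isomorphism of rooted labelled trees; moreover $\bp(\tau_n)$ has at most $n$ vertices (immediate from $N(\tau_n)=n$ and $N=2Z_{\mathrm b}-Z_{\mathrm a}\geq Z_{\mathrm b}$), so $\cA^\cont_{\tau_n}$ is finite.

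First I would establish the Markov step, which rests entirely on the lack of memory of the exponential inter-birth times. Each $\tau_n$ is a stopping time for the natural filtration $(\cF_t)_{t\geq0}$ of the i.i.d.\ family $\{(X(ui))_{i\in\N},D^{(u)}\}_{u\in\cU_\infty}$. Fix an alive individual $u\in\cA^\cont_{\tau_n}$ with current offspring count $k:=\deg_{\tau_n}(u)$, and set $T_j^{(u)}:=\cB(u)+\sum_{i=1}^j X(ui)$. The event that $u$ is alive at $\tau_n$ with exactly $k$ children is precisely $\{D^{(u)}\geq k,\ T_k^{(u)}\leq\tau_n<T_{k+1}^{(u)}\}$, so the only information $\cF_{\tau_n}$ carries about the pair $(X(u(k+1)),D^{(u)})$ is that $X(u(k+1))>\tau_n-T_k^{(u)}$ and $D^{(u)}\geq k$. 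Since $X(u(k+1))\overset{\mathrm{d}}{=}E_k\sim\mathrm{Exp}(b(k)+d(k))$ and $D^{(u)}\overset{\mathrm{d}}{=}D$ is independent of $(E_i)_{i\in\N_0}$, the memoryless property gives that, conditionally on $\cF_{\tau_n}$: the residual time $T_{k+1}^{(u)}-\tau_n$ until $u$'s next event (the birth of its $(k+1)$-st child if $D^{(u)}\geq k+1$, otherwise its death) is $\mathrm{Exp}(b(k)+d(k))$; this next event is a birth with probability $\P{D\geq k+1\mid D\geq k}=b(k)/(b(k)+d(k))$ and a death otherwise, independently of the residual time; and for distinct alive individuals all of these data are mutually independent, being built from inter-birth and offspring variables indexed by distinct Ulam--Harris strings.

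Next I would read off the transition kernel of the embedded chain. By the previous step, $\tau_{n+1}=\tau_n+\min_{w\in\cA^\cont_{\tau_n}}R_w$, where the $R_w$ are independent exponentials with rates $b(\deg_{\tau_n}(w))+d(\deg_{\tau_n}(w))$; hence, by the standard competing-exponentials identity, the $(n+1)$-st event occurs at $u\in\cA^\cont_{\tau_n}$ with probability $\big(b(\deg_{\tau_n}(u))+d(\deg_{\tau_n}(u))\big)\big/\sum_{w\in\cA^\cont_{\tau_n}}\big(b(\deg_{\tau_n}(w))+d(\deg_{\tau_n}(w))\big)$, and given $u$ the event is a death with probability $d(\deg_{\tau_n}(u))/\big(b(\deg_{\tau_n}(u))+d(\deg_{\tau_n}(u))\big)$; on a death $u$ leaves $\cA^\cont$, on a birth a new child of $u$ is appended, which has offspring count $0$ and is alive at its birth since its lifetime a.s.\ dominates its first (positive) inter-birth time. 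Under the relabelling above this is verbatim the update rule of Definition~\ref{def:pavd}, and applying the strong Markov property successively at $\tau_1,\tau_2,\ldots$ then yields the claim. The case $\cS^c$ needs no extra work: once $\cA^\cont_t=\varnothing$ no further events occur, $N$ is eventually constant, and $(\bp(\tau_n),\cA^\cont_{\tau_n})$ freezes, matching the convention $T_i=T_n$ for $i>n$ in Definition~\ref{def:pavd}.

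The hard part will be making the conditional lack-of-memory and independence assertions of the second paragraph fully rigorous — that conditioning on the discrete configuration $(\bp(\tau_n),\cA^\cont_{\tau_n})$, which is an intricate functional of the i.i.d.\ Ulam--Harris data sampled at the \emph{random} time $\tau_n$, genuinely leaves every residual clock fresh and independent across alive individuals. A clean way to organise this is to define the CTPAVD \emph{a priori} as the continuous-time Markov jump process on the above state space with the stated birth/death jump rates, and then verify that the Ulam--Harris construction is a version of it — a routine check that a system of independent exponential clocks realises the prescribed generator, using the independence of $D^{(u)}$ from $(X(ui))_{i}$ to split each clock's ring into a birth or a death — after which the embedded discrete-time chain, its transition probabilities, and the a.s.\ finiteness of all $\tau_n$ before extinction follow from elementary Markov-chain theory. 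The remaining points (that the relabelling is a tree isomorphism, matching of initial states, and the extinction convention) are bookkeeping.
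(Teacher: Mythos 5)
Your proof is correct and follows the same route the paper has in mind: the paper dispenses with this proposition in one sentence, attributing it to the memoryless property of exponentials together with the competing-exponentials identity, and your argument is a careful elaboration of exactly those two ingredients (relabelling by $N(\cB(u))$, residual clocks fresh by memorylessness at the stopping time $\tau_n$, vertex selection by minima of independent exponentials, and the birth/death split via independence of $D^{(u)}$ from the inter-birth times, using $\P{D\geq k+1\mid D\geq k}=b(k)/(b(k)+d(k))$). Your closing remark — that the cleanest way to make the conditional independence and lack-of-memory at a random time rigorous is to define the CTPAVD as a continuous-time Markov jump process and check that the Ulam--Harris clock construction realises its generator — is precisely the standard way this embedding, going back to Athreya--Karlin, is justified.
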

	
	The proposition follows from the memoryless property of exponential random variables and properties of minima of exponential random variables. The technique of embedding evolving discrete structures was originally pioneered by Athreya and Karlin~\cite{ArthKar68} and has found fruitful applications in a wide variety of discrete (combinatorial) models, such as P\'olya urns, discrete randomly growing trees such as preferential attachment trees and uniform attachment trees (as first observed by Pittel~\cite{Pit94}), evolving simplicial complexes, and many more. 
	
	The description of the branching process $\bp$ in which we embed the PAVD tree is used by Deijfen~\cite{Dei10} to determine an explicit description for the limiting offspring distribution of alive individuals. The description of the branching process used here is somewhat different (though equivalent). Namely, the explicit construction in terms of the sum of inter-birth times $S_k$, the offspring random variable $D$, and the lifetime $L$, was not used by Deijfen. This viewpoint provides an analytical advantage. In particular, it allows us to analyse the lifetime distribution of individuals and the number of children an individual produces over time to a greater extent.

	\newpage
	\textbf{Persistence in the CTPAVD branching process. } 
	
	We can view the concepts of persistent $m$-hubs, and persistence and the lack thereof from a continuous-time perspective as well. To this end, we define 
	\be \label{eq:offspring}
	\deg_t(u)\coloneq \min\big\{\sup\big\{k\in\N_0: S_k^{(u)}\leq t-\cB(u)\big\},D^{(u)}\big\},\qquad u\in \cU_\infty, t\geq 0,
	\ee 
	as the number of children $u$ has produced by time $t$. We use the convention that $S_0^{(v)}=0$ for all $v$ and that we set $\deg_t(u)=-\infty$ when $t<\cB(u)$, i.e.\ when $u$ has not (yet) been born. We also define
	\be\label{eq:contdef}
	O_t^\cont\coloneq \!\!\min_{v\in \cA_t^\cont}\!\cB(v)
	\ee
	as the birth-time of the oldest alive individual at time $t$, which is the continuous-time counterpart of $O_n$, as in~\eqref{eq:On}. Similarly, we define 
	\be 
	I_t^{(1),\cont}\coloneq \min\big\{\cB(v):v\in \cA^\cont_t, \deg_t(v)\geq\deg_t(u)\text{ for all }u\in \cA^\cont_t\big\},
	\ee 
	and for $m\geq 2$ fixed, 
	\be 
	I_t^{(m),\cont}\coloneq \min\big\{\cB(v):v\in \cA^\cont_t\setminus\{I_t^{(i),\cont}\}_{i\in[m-1]}, \deg_t(v)\geq\deg_t(u)\text{ for all }u\in \cA^\cont_t\setminus\{I_t^{(i),\cont}\}_{i\in[m-1]}\big\}.
	\ee 
	Here $I_t^{(m),\cont}$ denotes the birth-time of the oldest alive individual that has the $m^{\mathrm{th}}$ largest number of children at time $t$ and is the continuous-time counterpart of $I_n^{(m)}$. We say a branching process $(\bp(t))_{t\geq 0}$ contains a persistent elder  when there exists a random variable $O^{\cont}$ that is finite almost surely, such that 
	\be
	O^\cont_t\to O^\cont \qquad \mathbb P_\cS\text{-almost surely as $t\to\infty$}.
	\ee
	We say that $(\bp(t))_{t\geq 0}$ contains a persistent $m$-hub when there exists an almost surely finite random variable $I^{(m),\cont}$ such that
	\be \label{eq:hubdefcont}
	I_t^{(m),\cont} \to I^{(m),\cont} \qquad \mathbb P_\cS\text{-almost surely as $t\to\infty$}.
	\ee 
	When $(\bp(t))_{\geq 0}$ does not contain a persistent $m$-hub for any $m\in\N$, we say that persistence occurs in $(\bp(t))_{t\geq 0}$  when 
	\be\label{eq:persnoperscont} 
	(I^{(1),\cont}_t-O^\cont_t)_{t\geq 0} \text{ is a tight sequence of random variables with respect to $\mathbb P_\cS$,} 
	\ee 
	and that lack of persistence occurs when 
	\be 
	I^{(1),\cont}_t-O^\cont_t\toinps \infty. 
	\ee 
	As in the discrete setting, $(\bp(t))_{t\geq 0}$ containing a persistent $1$-hub implies that it contains a persistent elder, and it implies persistence in $(\bp(t))_{t\geq 0}$. On the other, hand, lack of persistence implies that $(\bp(t))_{t\geq 0}$ does not contain a persistent $1$-hub.

	Proposition~\ref{prop:embed} also implies that 
	\be \label{eq:OtItequiv}
	\{(O_n,I_n): n\in\N\}\overset{\mathrm{d}}{=} \{(N(O^\cont_{\tau_n}),N(I^\cont_{\tau_n})): n\in\N\}.
	\ee 
	Under Assumption~\eqref{ass:C1} it follows that $N(t)$, as defined in~\eqref{eq:Nt}, satisfies 
	\be \label{eq:Nconv}
	N(t)\e^{-\lambda^*\!t}\toas W, 
	\ee 
	where $W$ is a non-negative random variable, such that $\P{W>0\,|\,\cS}=1$ and hence that $\tau_n-\frac{1}{\lambda^*}\log n$ converges almost surely (see also Proposition~\ref{prop:growth}). As a result, we can, approximately, relate the quantities in~\eqref{eq:OtItequiv} via 
	\be \label{eq:corr}
	O_n\approx \exp\big( \lambda^* O^\cont_{\log(n)/\lambda^*}\big), \qquad\text{and},\qquad \ I_n\approx \exp\big( \lambda^* I^\cont_{\log(n)/\lambda^*}\big).
	\ee 
	In particular, this relates the definition of a persistent hub and (lack of ) persistence in $(\bp(t))_{t\geq 0}$ to the same concepts for the PAVD discrete tree process $(T_n)_{n\in\N}$. 
	
	In Sections~\ref{sec:inflife} and~\ref{sec:finlife} we analyse the CMJ branching process  defined in this section and, in particular, prove results under which conditions the behaviour in~\eqref{eq:hubdefcont} and \eqref{eq:persnoperscont} occurs. In each of these sections, we use these results when proving  the main theorems by leveraging Proposition~\ref{prop:embed}.

	\section{(Non)-existence of persistent hubs} \label{sec:inflife}
	
	This section is devoted to proving results regarding the (non-)existence of persistent hubs for the CTPAVD branching process introduced in Section~\ref{sec:embed}. Via Proposition~\ref{prop:embed} we then obtain Theorem~\ref{thrm:inflife}. 

	We first state the following result classifying when $(\bp(t))_{t\geq 0}$ does (not) contain a persistent $m$-hub for any $m\in\N$

	\begin{theorem}[(Non-)existence of $m$-hubs]\label{thrm:genpers}
		Recall the CTPAVD branching process $(\bp(t))_{t\geq 0}$ defined in~\eqref{eq:bpdef}, with offspring, inter-birth times, and lifetimes as defined in (the paragraph before)~\eqref{eq:birthlifetime}. Assume that Assumption~\eqref{ass:A1} is satisfied and Assumption~\eqref{ass:A2} is not satisfied. 
		\begin{enumerate}[label=(\alph*)]
			\item If Assumption~\eqref{ass:varphi2} is not satisfied, then for any $m\in\N$, $\mathbb P_\cS$-almost surely $(\bp(t))_{t\geq 0}$ does not contain a persistent $m$-hub.
			\item If Assumption~\eqref{ass:varphi2} is satisfied and Assumption~\eqref{ass:mufin} is satisfied, then for any $m\in \N$,  $\mathbb P_\cS$-almost surely $(\bp(t))_{t\geq 0}$ contains a unique persistent $m$-hub, in the sense of~\eqref{eq:hubdefcont}.  For any $m\geq 2$ there exists $t'>0$ such that  $\mathbb P_\cS$-almost surely for all $t>t'$,
			\be \label{eq:distdeg} \deg_t(I^{(1),\cont})>\deg_t(I^{(2),\cont})>\cdots>\deg_t(I^{(m),\cont}).
			\ee  
		\end{enumerate}
	\end{theorem}

	\textbf{Related work.} The results presented in Theorem~\ref{thrm:genpers} are related to recent work of Iyer on persistent hubs in general CMJ branching processes~\cite{Iyer24}. The main difference between the results of Iyer and the results presented here are:
	\begin{itemize}
		\item Iyer focusses on general branching processes, where  the inter-birth times $X(i)$ are not necessarily exponentially distributed.
		\item Iyer focusses the case where the offspring $D^{(v)}$ is infinite for each individual $v$.
		\item The results of Iyer are restricted to the case $m=1$ and do not include~\eqref{eq:distdeg}.
	\end{itemize} 
	
	In short, based on proof techniques developed by Iyer, we extend his results to the case $m>1$ and to the case that $\P{D=\infty}\in(0,1]$ (which is the case when  Assumption~\eqref{ass:A2} is not satisfied) when the inter-birth times are exponentially distributed. These extensions can be proved for more general CMJ branching processes, where we do not assume the inter-birth times to be exponentially distributed, as well. As the focus of this work lies on the PAVD model, we do not prove this explicitly, but rather provide a discussion on how to  extend (part of) the results in Theorem~\ref{thrm:genpers} to general CMJ branching processes can be found Appendix~\ref{app:genproof}.

	\subsection{Proof of Theorem~\ref{thrm:inflife}} 
	
	To translate the results in Theorem~\ref{thrm:genpers} to the results of Theorem~\ref{thrm:inflife}, we first state the following lemma, which is a slight generalisation of Lemma $3.5$ in~\cite{Iyer24}. 
	
	\begin{lemma}\label{lemma:finbp}
		Recall the Laplace transform  $\widehat \mu$ from~\eqref{eq:muhat} and suppose that Assumption~\eqref{ass:mufin} is satisfied. That is, suppose that there exists $\lambda>0$ such that $\widehat\mu(\lambda)<\infty$. Then, $|\bp(t)|<\infty$ for all $t\geq 0$ almost surely.
	\end{lemma}
	
	\begin{proof}
		We start by observing that Assumption~\eqref{ass:mufin} is equivalent to there existing $\lambda'\geq \lambda$ such that $\widehat\mu(\lambda')<1$. Indeed, since $\widehat\mu(\lambda)<\infty$ and $\widehat\mu(\alpha)$ is decreasing in $\alpha$, it follows from the dominated convergence theorem that 
		\be\label{eq:muless1}
		\lim_{\alpha \to\infty}\widehat\mu(\alpha)=\mu(\{0\})=0\qquad \text{and so there exists $\lambda'\in[\lambda,\infty)$ such that }\widehat\mu(\lambda')<1.  
		\ee 	
		We then prove the result.  For any $v=v_1\cdots v_m\in \cU_\infty$, with $m\in\N$ and $v_1,\ldots, v_m\in\N$,
		\be 
		\P{\cB(v)\leq t}=\P{D^{(v_1\cdots v_{\ell-1})}\geq v_\ell\text{ for all }\ell\in[m]\text{, and }\sum_{\ell=1}^m \sum_{i=0}^{v_\ell-1}E^{(v_1\cdots v_{\ell-1})}_i\leq t}.
		\ee  
		We use the independence of the offspring random variables and the inter-birth times, together with a Chernoff bound with $\lambda'$, to bound the right-hand side from above by 
		\be 
		\e^{\lambda't}\prod_{\ell=1}^m \P{D\geq v_\ell} \E{\exp\bigg(-\lambda' \sum_{i=0}^{v_\ell-1}E_i\bigg)}.
		\ee 
		By summing over all individuals $v\in \cU_\infty$ (where $\P{\cB(\varnothing)\leq t}=1$ as $\cB(\varnothing)=0$), we obtain 
		\be \ba 
		\E{|\bp(t)|}&=\sum_{v\in \cU_\infty}\P{\cB(v)\leq t}\\
		&\leq 1+\e^{\lambda' t}\sum_{m=1}^\infty \sum_{v_1=1}^\infty\cdots \sum_{v_m=1}^\infty \prod_{\ell=1}^m \P{D\geq v_\ell} \E{\exp\bigg(-\lambda' \sum_{i=0}^{v_\ell-1}E_i\bigg)}.
		\ea \ee 
		As the terms in the product depend on distinct summation indices, we can interchange summation and product to yield
		\be 
		\sum_{m=0}^\infty \Bigg(\sum_{j=1}^\infty \P{D\geq j}\E{\exp\bigg(-\lambda' \sum_{i=0}^{j-1}E_i\bigg)}\Bigg)^m.
		\ee 
		By rewriting the inner sum and using that $D$ is independent of $(E_i)_{i\in\N}$, we obtain 
		\be 
		\sum_{j=1}^\infty \P{D\geq j}\E{\exp\bigg(-\lambda' \sum_{i=0}^{j-1}E_i\bigg)}=\E{\sum_{j=1}^D\exp\bigg(-\lambda'\sum_{i=0}^{j-1}E_i\bigg)}=\widehat \mu(\lambda'),
		\ee 
		where we recall $\widehat \mu$ from~\eqref{eq:muhat}. We finally obtain that $\E{|\bp(t)|}\leq \e^{\lambda' t}\sum_{m=0}^\infty \widehat\mu(\lambda')^m$, which is finite since   $\widehat\mu(\lambda')<1$. Hence, $|\bp(t)|<\infty$ almost surely, as desired.
	\end{proof}
	
	Theorem~\ref{thrm:inflife} then readily follows.
	
	\begin{proof}[Proof of Theorem~\ref{thrm:inflife}]
		Recall the stopping times $(\tau_n)_{n\in\N}$ from~\eqref{eq:taun}. By using Lemma~\ref{lemma:finbp} we observe that $\tau_n\to \infty$ almost surely conditionally on survival. As a result, by using Proposition~\ref{prop:embed} we obtain that Theorem~\ref{thrm:genpers} directly implies Theorem~\ref{thrm:inflife}. 
	\end{proof}

	The remainder of this section is devoted to proving Theorem~\ref{thrm:genpers}.
	
	\subsection{Preliminary results} 
	
	We first collect a few preliminary results that are used throughout the proof of Theorem~\ref{thrm:genpers}.
	
	\begin{lemma}[Lemma $3.4$ in~\cite{Iyer24}] \label{lemma:catch}
		Let $(X(i))_{i\in\N}$, $(X(i)')_{i\in\N}$, and $Y$ be mutually independent random variables, with $(X(i))\overset{\mathrm{d}}{=}(X(i)')_{i\in\N}$. Suppose that there exist $\lambda>0$ and $k\in\N$ such that 
		\be \label{eq:infprodexp}
		\prod_{i=k}^\infty \E{\e^{\lambda(X(i)'-X(i))}}<\infty. 
		\ee 
		Then, 
		\be 
		\mathbb P\bigg(\exists j\in\N_0\colon  Y+\sum_{i=1}^{k+j}X(i)\leq \sum_{i=k}^{k+j}X(i)'\bigg)\leq \bigg(\prod_{i=k}^\infty\mathbb E\Big[\e^{\lambda(X(i)'-X(i))}\Big]\bigg)\mathbb E\bigg[\exp\bigg(-\lambda\bigg(Y+\sum_{i=1}^{k-1}X(i)\bigg)\bigg)\bigg].
		\ee 
	\end{lemma} 
	
	In short, the proof of Lemma~\ref{lemma:catch} is based on the fact that the random variable $\exp(\lambda \sum_{i=k}^{k+j}( X(i)-X(i)'))$ is a sub-martingale combined with Chernoff's inequality.
	
	\begin{lemma}[Proposition $1.15$ in~\cite{Iyermonopoly24}] \label{lemma:divsum} 
		Suppose that $(S_i)_{j\in\N}$ is a sequence of mutually independent, symmetric random variables. If $\sum_{i=1}^\infty S_i$ diverges almost surely, then 
		\be 
		\limsup_{n\to\infty}\sum_{i=1}^n S_i=\infty\qquad \text{and}\qquad \liminf_{n\to\infty} \sum_{i=1}^n S_i=-\infty,\qquad\text{almost surely.} 
		\ee 
	\end{lemma}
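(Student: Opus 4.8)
The plan is to reduce the statement to showing $\P{\limsup_{n\to\infty}T_n=+\infty}=1$, where I write $T_n:=\sum_{i=1}^nS_i$. First I would check that both $\{\limsup_n T_n=+\infty\}$ and $\{\liminf_n T_n=-\infty\}$ are tail events: for any fixed $k$ one has $\limsup_n T_n=T_k+\limsup_n\sum_{i=k+1}^nS_i$ with $T_k$ a.s.\ finite, so $\{\limsup_n T_n=+\infty\}\in\sigma(S_{k+1},S_{k+2},\dots)$ for every $k$, and similarly for the $\liminf$; Kolmogorov's zero--one law then gives that each of these events has probability $0$ or $1$. Since the $S_i$ are independent and symmetric, $(S_i)_{i\in\N}\equalsd(-S_i)_{i\in\N}$, hence $(T_n)_{n\in\N}\equalsd(-T_n)_{n\in\N}$, and therefore $\P{\limsup_n T_n=+\infty}=\P{\liminf_n T_n=-\infty}=:p\in\{0,1\}$. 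If $p=1$ both conclusions hold simultaneously, so it remains to rule out $p=0$.

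Assume $p=0$. Then $\limsup_n T_n<\infty$ a.s.\ and, by the same symmetry, $\liminf_n T_n>-\infty$ a.s., so $M:=\sup_n|T_n|<\infty$ a.s. I claim this forces $(T_n)$ to converge a.s., contradicting the hypothesis that $\sum_iS_i$ diverges. The first move is a truncation. Since $|S_i|=|T_i-T_{i-1}|\le 2M$ for every $i$, one gets $\P{\exists i:|S_i|>c}\le\P{2M>c}\to0$ as $c\to\infty$; together with the second Borel--Cantelli lemma (here independence is used) this is incompatible with $\sum_i\P{|S_i|>c}=\infty$ holding for every $c$, so there is a finite $c_0$ with $\sum_i\P{|S_i|>c_0}<\infty$. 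Fix such a $c_0$, set $S'_i:=S_i\1{|S_i|\le c_0}$ and $T'_n:=\sum_{i\le n}S'_i$; by the first Borel--Cantelli lemma $S_i\neq S'_i$ for only finitely many $i$ a.s., so $\sum_iS_i$ converges iff $\sum_iS'_i$ does, and moreover $\sup_n|T'_n|<\infty$ a.s.

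Now $(T'_n)$ is a martingale, the summands $S'_i$ being independent, symmetric and bounded, hence integrable with mean zero. For each $K\in\N$ let $\tau_K:=\inf\{n:|T'_n|>K\}$; then $(T'_{n\wedge\tau_K})_n$ is a martingale bounded in absolute value by $K+c_0$, so it converges a.s.\ by the martingale convergence theorem, and on $\{\tau_K=\infty\}=\{\sup_n|T'_n|\le K\}$ it coincides with $(T'_n)$. As $\P{\sup_n|T'_n|\le K}\uparrow1$ when $K\to\infty$, this forces $(T'_n)$, hence $(T_n)$, to converge a.s.\ --- the desired contradiction, so $p=1$ and we are done. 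The hard part is exactly the implication \emph{a.s.-bounded partial sums of independent symmetric variables converge a.s.}; the two ideas that make it work are that a.s.\ boundedness of $(T_n)$ automatically bounds the increments $S_i$ (reducing matters, via Borel--Cantelli, to uniformly bounded increments) and that the stopped-martingale trick upgrades boundedness to convergence. Everything else is zero--one law and symmetry bookkeeping; one could also run the contradiction through the Kolmogorov three-series theorem --- for symmetric summands the centering series vanishes, so divergence means $\sum_i\P{|S_i|>c}=\infty$ or $\sum_i\Var(S_i\1{|S_i|\le c})=\infty$ for every $c$ --- but extracting $\limsup_nT_n=+\infty$ from that directly looks more cumbersome than the martingale route.
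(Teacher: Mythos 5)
The paper does not prove this lemma; it is cited verbatim as Proposition $1.15$ of \cite{Iyermonopoly24}, so there is no in-paper argument to compare against. Your proof is correct and self-contained. The zero--one-law reduction is valid because, for every $k$, the event $\{\limsup_n T_n=+\infty\}$ coincides on the a.s.\ event $\{|T_k|<\infty\}$ with an event in $\sigma(S_{k+1},S_{k+2},\ldots)$, and symmetry of the law of $(S_i)_{i\in\N}$ correctly identifies the two tail probabilities. The core implication --- a.s.\ bounded partial sums of independent symmetric summands must converge a.s.\ --- is also handled correctly: the bound $|S_i|\le 2\sup_n|T_n|$ plus the second Borel--Cantelli lemma yields a finite truncation level $c_0$; the truncated partial sums $T'_n$ remain a.s.\ bounded because the correction $T_n-T'_n$ is an a.s.\ eventually constant finite sum; the stopped martingale $T'_{n\wedge\tau_K}$ has increments bounded by $c_0$, is therefore uniformly bounded by $K+c_0$ and hence a.s.\ convergent, and since $\mathbb{P}(\tau_K=\infty)\uparrow 1$ this forces $T'_n$, hence $T_n$, to converge a.s., contradicting divergence. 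The classical alternative for that final step is L\'evy's maximal inequality for symmetric sums combined with the L\'evy/It\^o--Nisio equivalence of modes of convergence for series of independent variables; your martingale route reaches the same conclusion with the same truncation idea and arguably less machinery.
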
 
	
	The proof of Lemma~\ref{lemma:divsum} is based on the fact that $\sum_{i=1}^n S_i$ is a martingale, combined with the fact that divergent martingales are not bounded almost surely. 
	
	\begin{lemma}\label{lemma:sumexp}
		Let $(E_i)_{i\in\N_0}$ and $(E_i')_{i\in\N_0}$ be independent exponential random variables, where $E_i$ and $E_i'$ have rate $\lambda_i>0$ for each $i\in\N_0$. Then, 
		\be 
		\sum_{i=0}^\infty (E_i-E_i')\text{ converges almost surely} \quad \Leftrightarrow \quad \sum_{i=0}^\infty \frac{1}{\lambda_i^2}<\infty. 
		\ee 
	\end{lemma}
	
	\begin{remark}\label{rem:sumconvass}
		$(i)$ If we set $X(i)=E_i$ and $X(i)'=E_i'$ in Lemma~\ref{lemma:catch},  it also follows that for any  $\lambda>0$ there exist $k\in\N$ such that~\eqref{eq:infprodexp} is satisfied when 
		\be \label{eq:sum}
		\sum_{i=0}^\infty \frac{1}{\lambda_i^2}<\infty.
		\ee
		Indeed, as the sum is finite it follows that $\lambda_i\to\infty$ with $i$, so that for any $\lambda>0$ we can take $k$ large enough such that $\lambda_i>\lambda$ for all $i\geq k$, so that 
		\be 
		\E{\e^{\lambda(E_i'-E_i)}}=\frac{\lambda_i^2}{(\lambda_i-\lambda)(\lambda_i+\lambda)}=\frac{1}{1-\lambda^2/\lambda_i^2}.
		\ee 
		The product in~\eqref{eq:infprodexp} hence is finite when the sum in~\eqref{eq:sum} is finite.
		
		$(ii)$ If we apply Lemma~\ref{lemma:sumexp} with rates $\lambda_i=b(i)+d(i)$, the condition in~\eqref{eq:sum} is  Assumption~\eqref{ass:varphi2}. \ensymboldremark
	\end{remark} 
	
	\begin{proof}
		To prove the convergence of the series of exponential random variables, we check the conditions of Kolmogorov's three series theorem (see e.g.\ \cite[Theorem $2.5.4$]{Durr19}). This theorem states that a random series $\sum_{i=0}^\infty X_i$ of independent random variables $(X_i)_{i\in\N_0}$ converges almost surely if the following conditions holds for some $A>0$, and only if the following conditions hold for any $A>0$:
		\be 
		(i)\  \sum_{i=0}^\infty \P{|X_i|\geq A}<\infty\quad (ii)\ \sum_{i=0}^\infty \E{X_i\ind_{\{|X_i|\leq A\}}}<\infty \quad (iii)\ \sum_{i=0}^\infty \Var(X_i\ind_{\{|X_i|\leq A\}})<\infty. 
		\ee 
		We notice that for symmetric random variables, the second condition is always satisfied. It thus suffices to focus on conditions $(i)$ and $(iii)$, for $X_i=E_i-E_i'$. 
		
		Let us first prove the `if' direction. We suppose that $\sum_{i=0}^\infty \lambda_i^{-2}<\infty$. Then, 
		\be 
		\sum_{i=0}^\infty \Var(X_i\ind_{\{|X_i|\leq A\}})=\sum_{i=0}^\infty \E{X_i^2\ind_{\{|X_i|\leq A\}}}\leq  \sum_{i=0}^\infty \E{X_i^2}=\sum_{i=0}^\infty2\Var(E_i)= \sum_{i=0}^\infty \frac{2}{\lambda_i^2},
		\ee  
		which is finite by assumption. Condition $(iii)$ is hence satisfied. Also, 
		\be \label{eq:PAbound}
		\sum_{i=0}^\infty \P{|X_i|\geq A} \leq \sum_{i=0}^\infty \P{E_i+E_i'\geq A} \leq 2\sum_{i=0}^\infty \P{E_i \geq\frac A2}= 2\sum_{i=0}^\infty\e^{-\lambda_i A/2}
		\ee 
		By assuming that $\sum_{i=0}^\infty \lambda_i^{-2}<\infty$, it follows that $\lambda_i\to\infty$ with $i$. Hence, there exists $I\in\N$ such that $\e^{-\lambda_iA/2}\leq 1/\lambda_i^2$, from which Condition $(i)$ follows.
		
		For the `only if' direction, we argue by contradiction. We assume that there exists $A>0$ such that either Condition $(i)$ or $(iii)$ does not hold, and additionally assume that  $\sum_{i=0}^\infty \lambda_i^{-2}<\infty$. We show that this leads to a contradiction. 
		
		Suppose first that Condition $(i)$ is not satisfied for some $A>0$. By the bound in~\eqref{eq:PAbound} and the assumption that $\sum_{i=0}^\infty \lambda_i^{-2}<\infty$, this directly leads to a contradiction. Hence, now assume that Condition $(i)$ is satisfied for all $A>0$, but that Condition $(iii)$ is not satisfied for some $A>0$. Then, by the second Borel-Cantelli lemma, $|X_i|\geq A$ only finitely often almost surely, so that 
		\be 
		\sum_{i=0}^\infty \Var(X_i\ind_{\{|X_i|\leq A\}})=\infty \quad \Leftrightarrow \quad \sum_{i=0}^\infty \Var(X_i)=\infty.
		\ee 
		However, the equality on the right-hand side contradicts the assumption that $\sum_{i=0}^\infty \lambda_i^{-2}<\infty$, since $\Var(X_i)=\E{X_i^2}=2\Var{E_i}=2\lambda_i^{-2}$. 
	\end{proof}
	
	\subsection{Proving Theorem~\ref{thrm:genpers}(a)}

	For $u,v\in\cU_\infty$, define the event 
	\be \label{eq:win}
	\text{Win}(u,v)\coloneq \{\exists T\geq 0: \forall t\geq T\ \deg_t(u)\geq \deg_t(v)\}.
	\ee 
	If an individual $u$ has not been born in $\bp(t)$, we use the convention that $\deg_t(u)=-\infty$. 
	
	We then have the following result, which is based on Lemma $3.6$ in~\cite{Iyer24}.
	
	\begin{lemma}\label{lemma:tauwin}
		Recall the CTPAVD branching process $(\bp(t))_{t\geq 0}$ defined in~\eqref{eq:bpdef}, with offspring, inter-birth times, and lifetimes as defined in (the paragraph before)~\eqref{eq:birthlifetime}. Assume that Assumption~\eqref{ass:A1} is satisfied and Assumption~\eqref{ass:A2} is not satisfied. The following statements are true:
		\begin{enumerate}
			\item  $\lim_{t\to\infty}\max_{v\in \bp(t)}\deg_t(v)=\infty$ $\mathbb P_\cS$-almost surely. 
			\item If Assumption~\eqref{ass:varphi2} is satisfied, then for all $u,v\in\cU_\infty$,
			\be 
			\Ps{\mathrm{Win}(u,v)\cup\mathrm{Win}(v,u)}=1.
			\ee  
			\item If Assumption~\eqref{ass:varphi2} is not satisfied, there exists a function $\phi:\N\to\N$ such that $\phi(j)>j$ for all $j\in\N$ such that for any $u\in\cU_\infty$ and $j\in\N$, 
			\be 
			\P{\exists k\in\{j+1,\ldots, \phi(j)\}: \sum_{i=j}^{k-1} E^{(u)}_i>\sum_{i=0}^{k-1} E^{(uj)}_i}\geq \frac12.
			\ee  
		\end{enumerate}
	\end{lemma}
	
	\begin{proof}
		$(a)$ As Assumption~\eqref{ass:A2} is not satisfied, it follows from~\cite[Lemma 5.3 and Corollary 6.4]{HeyLod25} that offspring random variable $D$ is such that $\P{D=\infty}>0$. We can hence define $V$ be the first individual born in $(\bp(t))_{t\geq 0}$ such that $D^{(V)}=\infty$. Conditionally on survival such an individual $V$ exists almost surely. Furthermore, the birth-time of $V$ is finite almost surely. As a result, for all $t\geq \cB(V)$, 
		\be 
		\max_{v\in \bp(t)}\deg_t(v)\geq \deg_t(V), 
		\ee 
		and the right-hand side tends to infinity with $t$ since $D^{(V)}=\infty$.
		
		$(b)$ The event $\text{Win}(u,v)\cup \text{Win}(v,u)$ is trivially satisfied if at most one of the two individuals $u$ and $v$ is born. Given that both vertices are born, again the event is trivially satisfied if at most one individual produces infinite offspring. As a result, we define 
		\be
		\cE(u,v)\coloneq  \{\cB(u),\cB(v)<\infty\}\cap \{D^{(u)},D_v=\infty\},
		\ee 
		and we can thus write 
		\be
		\P{\mathrm{Win}(u,v)\cup\mathrm{Win}(v,u)}=	\P{(\mathrm{Win}(u,v)\cup\mathrm{Win}(v,u))\cap \cE(u,v)}+\P{\cE(u,v)^c}.
		\ee 
		Our aim is to show that 
		\be \label{eq:goal}
		\P{(\mathrm{Win}(u,v)\cup\mathrm{Win}(v,u))\cap\cE(u,v)}=\P{\cE(u,v)}.
		\ee 
		Lemma~\ref{lemma:sumexp} tells us that  Assumption~\eqref{ass:varphi2} being satisfied is equivalent to the infinite series $\sum_{i=0}^{\infty} (E_i - E_i')$ converging almost surely. Then, for $u, v \in \cU_\infty$, on the event $\cE(u,v)$ we have
		\be
		\left| \cB(u) - \cB(v) + \sum_{i=0}^{\infty} (E^{(u)}_i - E^{(v)}_i) \right| < \infty \quad \text{almost surely,}
		\ee
		since $E^{(u)}_i\overset \dd=E_i$ and $E^{(v)}_i\overset \dd= E_i'$. 	Assume, without loss of generality, that $ |u| \leq |v| $. Let $ N $ be chosen such that $ \cB(v) $ is independent of $ (E^{(u)}_n)_{n \geq N} $. For example, if $ u $ is a non-parent ancestor of $ v $, we may choose the value $ N $ such that $ u(N-1) $ is the ancestor of $ v $, whilst if $ v = uj $ for $ j \in \N $, we can choose $ N > j $. Then, the sequences $ (E^{(u)}_n)_{n \geq N} $ and $ (E^{(v)}_n)_{n \geq N} $ are independent. Therefore, on the event $\cE(u,v)$ there exists $ K_0 \in \N $ such that  either
		\begin{align}
			\cB(u) - \cB(v) + \sum_{j=0}^{k-1} (E^{(u)}_j - E^{(v)}_j) > 0\qquad\text{for all $k\geq K_0$,}\label{eq:winv}
			\intertext{or}
			\cB(u) - \cB(v) + \sum_{j=0}^{k-1} (E^{(u)}_j - E^{(v)}_j) < 0 \qquad\text{for all $k\geq K_0$,}\label{eq:winu}
		\end{align}
		is satisfied $\mathbb P_\cS$-almost surely. In other words, we either have $ \cB(uk) > \cB(vk) $ for all $k\geq K_0$, or we have $ \cB(uk) < \cB(vk) $ for all $k\geq K_0$ $\mathbb P_\cS$-almost surely on the event $\cE(u,v)$. This in turn implies that for all $ k $ sufficiently large $ u $ has offspring $k$  before $ v $ has offspring $k$ or vice versa. Thus, we have that~\eqref{eq:goal} is satisfied, which concludes the proof of part $(b)$.
		
		$(c)$ We again use Lemma~\ref{lemma:sumexp} to conclude that Assumption~\eqref{ass:varphi2} not being satisfied is equivalent to the infinite series $\sum_{i=0}^\infty (E_i-E_i')$ diverging almost surely, and use again that $E^{(u)}_i\overset \dd= E_i$ and $E^{(uj)}_i\overset \dd = E_i'$ for any $j\in\N$ and any $u\in\cU_\infty$. We then use Lemma~\ref{lemma:divsum} to obtain that for any $j\in\N$ we have
		\be 
		\limsup_{n\to\infty}\sum_{i=j}^n (E^{(u)}_i-E^{(uj)}_i)=\infty \quad\text{almost surely}.
		\ee
		Since also $\sum_{i=0}^{j-1} E^{(u)}_i<\infty$ almost surely, we thus obtain that 
		\be 
		\P{\exists k>j: \sum_{i=j}^k E^{(u)}_i>\sum_{i=0}^k E^{(uj)}_i}=1.
		\ee 
		Then,  by the monotone convergence theorem, 
		\be 
		\lim_{n\to\infty} \P{\exists k\in\{j+1,\ldots,n\}: \sum_{i=j}^{k-1} E^{(u)}_i>\sum_{i=0}^{k-1} E^{(uj)}_i}=1.
		\ee 
		As a result, we can define $\phi(j)$ as 
		\be 
		\phi(j)\coloneq \inf\bigg\{n>j: \mathbb P\bigg(\exists k\in\{j+1,\ldots,n\}:\sum_{i=j}^{k-1} E^{(u)}_i>\sum_{i=0}^{k-1} E^{(uj)}_i\bigg)\geq \frac12\bigg\},
		\ee 
		which concludes the proof.
	\end{proof}
	
	We are now ready to prove part $(a)$ of Theorem~\ref{thrm:genpers}, i.e.\ that almost surely no persistent $m$-hub emerges $\mathbb P_\cS$-almost surely when Assumption~\eqref{ass:varphi2} is not satisfied (i.e.\ when the series $\sum_{i=0}^\infty (E_i-E_i')$ diverges almost surely. 
	
	\begin{proof}[Proof of Theorem~\ref{thrm:genpers}$(a)$]
		Fix $m\in\N$ and $u\in\cU_\infty$. We observe that, by $(a)$ of Lemma~\ref{lemma:tauwin}, conditionally on survival $u$ can be a persistent $m$-hub only when $\cB(u)<\infty$ and $D^{(u)}=\infty$. That is, $u$ needs to be born in the branching process in finite time and produce an infinite number of children. Hence, we can write
		\be \ba 
		\Ps{u\text{ is a persistent $m$-hub}}&=\mathbb P_\cS\big(\{u\text{ is a persistent $m$-hub}\}\cap\{\cB(u)<\infty, D^{(u)}=\infty\}\big)\\ 
		&= \frac{1}{\P{\cS}}\mathbb P\big(\{u\text{ is a persistent $m$-hub}\}\cap\{\cB(u)<\infty, D^{(u)}=\infty\}\big),
		\ea \ee 
		where the last step follows from the fact that the event $\{\cB(u)<\infty, D^{(u)}=\infty\}$ implies survival. We then use that, upon the event $\{\cB(u)<\infty, D^{(u)}=\infty\}$, $u$ can be a persistent $m$-hub only when for some $T\geq \cB(u)$  it has a larger offspring than all but at most $m-1$ many of its children for all time $t\geq T$. That is, 
		\be\ba  
		\{u\text{ is a persistent $m$-hub}\}&\subseteq \bigcup_{T\geq \cB(u)}\bigcup_{\substack{C\subset \N\\ |C|<m}}\{\forall j\in\N\setminus C \ \forall t\geq T: \deg_t(u)\geq \deg_t(uj)\}\\ 
		&=\bigcup_{K\in\N}\bigcup_{\substack{C\subset \N\\ |C|<m}}\{\forall j\in \N\setminus C\ \forall k\geq K: \cB(uk)\leq \cB(ujk)\}.
		\ea\ee 
		Here, the second step follows from the fact that any individual almost surely takes an infinite amount of time to produce infinite offspring by Assumption~\eqref{ass:A1}. We thus obtain 
		\be \ba \label{eq:persmub}
		\mathbb P_\cS{}&(u\text{ is a persistent hub})\\
		\leq{}& \frac{1}{\P{\cS}}\sum_{K=1}^\infty \sum_{\substack{C\subset \N\\ |C|<m}}\mathbb P\big(\{\forall j\in \N\setminus  C\ \forall k\geq K: \cB(uk)\leq \cB(ujk)\}\cap \{\cB(u)<\infty, D^{(u)}=\infty\}\big).
		\ea \ee 
		We now show that each probability on the right-hand side equals zero, by showing that 
		\be \label{eq:omitprob}
		\mathbb P\big(\{\exists j\in \N\setminus C\, \exists k\geq K: \cB(uk)> \cB(ujk)\}\cap \{\cB(u)<\infty, D^{(u)}=\infty\}\big)=\mathbb P\big(\cB(u)<\infty, D^{(u)}=\infty\big).
		\ee 
		On the event $\{\cB(u)<\infty\}$, we can write
		\be \label{eq:birthineq}
		\cB(uk)>\cB(ujk) \ \ \Leftrightarrow \ \ \sum_{i=0}^{k-1} E^{(u)}_i>\sum_{i=0}^{j-1} E^{(u)}_i +\sum_{i=0}^{k-1} E^{(uj)}_i  \ \ \Leftrightarrow \ \  \sum_{i=j}^{k-1} E^{(u)}_i>\sum_{i=0}^{k-1} E^{(uj)}_i.
		\ee 
		We now recall the function $\phi$ in part $(c)$ of Lemma~\ref{lemma:tauwin}. We let $(j_\ell)_{\ell\in\N}=j_\ell(C,K)$ be the subsequence $j_1=1+\max\{\max C,K\}$ and $j_{\ell+1}\coloneq \phi(j_\ell)$ for all $\ell\in\N$. As $\phi(j)>j$ for all $j\in\N$, it follows that the subsequence $(j_\ell)_{\ell\in\N}$ is strictly increasing and thus $j_\ell>\max C$ and $j_\ell>K$ for all $\ell\in\N$. Then,
		\be\ba 
		\mathbb P{}&\big(\{\exists j\in \N\setminus C\, \exists k\geq K: \cB(uk)> \cB(ujk)\}\cap \{\cB(u)<\infty, D^{(u)}=\infty\}\big)\\ 
		&=\mathbb P\bigg(\bigg\{\exists j\in \N\setminus C\, \exists k\geq K:\sum_{i=j}^{k-1} E^{(u)}_i>\sum_{i=0}^{k-1} E^{(uj)}_i \bigg\}\cap \{\cB(u)<\infty, D^{(u)}=\infty\}\bigg)\\ 
		&\geq \mathbb P\bigg(\bigg\{\exists \ell\in \N\, \exists k\in\{j_\ell+1,\ldots, \phi(j_\ell)\}\colon \sum_{i=j_\ell}^{k-1} E^{(u)}_i>\sum_{i=0}^{k-1}E^{(uj_\ell)}_i \bigg\}\cap \{\cB(u)<\infty, D^{(u)}=\infty\}\bigg).
		\ea\ee 
		For ease of writing, we define
		\be 
		\cE(u,\ell)\coloneq \bigg\{\exists k\in\{j_\ell+1,\ldots, \phi(j_\ell)\}\colon\sum_{i=j_\ell}^{k-1} E^{(u)}_i>\sum_{i=0}^{k-1} E^{(uj_\ell)}_i \bigg\}\qquad\text{for }\ell\in\N.
		\ee 
		We then arrive at the lower bound
		\be 
		\mathbb P\big(\{\cE(u,\ell) \text{ occurs for infinitely many }\ell\}\cap \{\cB(u)<\infty, D^{(u)}=\infty\}\big).
		\ee 
		By the choice of the sub-sequence $(j_\ell)_{\ell\in\N}$, the events $(\cE(u,\ell))_{\ell\in\N}$ are mutually independent, as they depend on distinct (and thus independent) inter-birth times. They are  also independent of  $\cB(u)$ and $D^{(u)}$, so that we can factorize the probability into
		\be 
		\mathbb P\big(\{\cE(u,\ell) \text{ occurs for infinitely many }\ell\}\big)\mathbb P\big(\cB(u)<\infty , D^{(u)}=\infty\big).
		\ee 
		By $(c)$ of Lemma~\ref{lemma:tauwin} we have that $\P{\cE(u,\ell)}\geq \frac12$ for all $\ell\in\N$. By their mutual independence and using the reverse Borel-Cantelli lemma, we thus obtain that infinitely many event $\cE(u,\ell)$ occur almost surely. This yields~\eqref{eq:omitprob}. Using this in~\eqref{eq:persmub} implies that 
		\be 
		\Ps{u\text{ is a persistent $m$-hub}}=0, 
		\ee 
		so that a countable union bound yields
		\be 
		\Ps{\exists u\in\cU_\infty: u\text{ is a persistent $m$-hub}}=0,
		\ee 
		which concludes the proof.
	\end{proof}
	
	\subsection{Proving Theorem~\ref{thrm:genpers}(b)} 
	
	We then move to part $(b)$ of Theorem~\ref{thrm:genpers}. To this end, we define for $u,v\in\cU_\infty$ such that $\cB(u)\leq \cB(v)<\infty$, the event
	\be 
	\{v\text{ catches up to }u\}=\{\exists j\in \N: \cB(vj)\leq \cB(uj)\} .
	\ee 
	That is, for an individual $v$ that is born at the same time as or after $u$, we say that $v$ catches up to $u$ if at some point its offspring is at least as large as that of $u$ (not including having zero children). Heuristically, one would expect that individuals that are born `late' in the branching process are unlikely to catch up to the root (if the root were to produce infinite offspring). The following result makes this heuristic precise.
	
	\begin{proposition}\label{prop:catch}
		Recall the CTPAVD branching process $(\bp(t))_{t\geq 0}$ defined in~\eqref{eq:bpdef}, with offspring, inter-birth times, and lifetimes as defined in (the paragraph before)~\eqref{eq:birthlifetime}. Assume that Assumptions~\eqref{ass:A1}, \eqref{ass:mufin}, and~\eqref{ass:varphi2} are satisfied and Assumption~\eqref{ass:A2} is not satisfied. Then, there exists $K\in\N$ such that
		\be 
		\Es{|\{v\in \cU_\infty: v_1\geq K, D^{(v)}=\infty,v\text{ catches up to }\varnothing\}|\ind_{\{D_\varnothing=\infty\}}}<\infty.
		\ee 
	\end{proposition}
	
	\begin{proof}
		We write the size of the set in the expected value as a sum of indicators. That is, we fix $v=v_1v_2\cdots v_m\in\cU_\infty$ for some $m\in\N$, $v_1\geq K$, and $v_2,\ldots v_m\in\N$,  and consider 
		\be \ba 
		\mathbb P_\cS{}&(v\text{ catches up to }\varnothing\text{ and }D^{(v)}=D^{(\varnothing)}=\infty)\\ 
		&=\P{\{D^{(v)}=D^{(\varnothing)}=\infty\}\cap\{\cB(v)<\infty\}\cap\{\exists j\in \N_0: \cB(v(v_1+j))\leq \cB(v_1+j)\}}.
		\ea\ee 
		We note that we can omit the conditioning on survival, since the event that $\varnothing$ produces infinite offspring guarantees survival. We also need only consider children numbered $v_1+j$ with $j\geq 0$ in the second line, since $\varnothing$ gives birth to $v_1$ many children before $v$ is born.  The event $\cB(v)<\infty$ is equivalent to $D^{(v_1\cdots v_\ell)}\geq v_{\ell+1}$ for all $\ell\in[m-1]$ (the event $\{D^{(\varnothing)}=\infty\}$ already implies that $v_1$ is born). When $D^{(v)}=\infty$, for $j\in\N_0$ the event that $v$ gives birth to its $(v_1+1+j)^{\mathrm{th}}$ child before $\varnothing$ does is equivalent to
		\be 
		\sum_{\ell=1}^{m-1}\sum_{i=0}^{v_{\ell+1}-1} E^{(v_1\cdots v_\ell)}_i+\sum_{i=0}^{v_1+j}E^{(v)}_i\leq \sum_{i=v_1}^{v_1+j}E^{(\varnothing)}_i. 
		\ee 
		This yields, by the independence of the offspring and inter-birth time random variables, 
		\be \ba 
		\mathbb P_\cS{}&(v\text{ catches up to }\varnothing\text{ and }D^{(v)}=D^{(\varnothing)}=\infty)\\ 
		={}&\P{ \exists j\in\N_0\colon  \sum_{\ell=1}^{m-1}\sum_{i=0}^{v_{\ell+1}-1} \!\!\!E^{(v_1\cdots v_\ell)}_i+\!\sum_{i=0}^{v_1+j}\!E^{(v)}_i\leq \sum_{i=v_1}^{v_1+j}\!E^{(\varnothing)}_i}\P{D=\infty}^2\prod_{\ell=1}^{m-1}\P{ D\geq v_{\ell+1}}.
		\ea\ee 
		We bound $\P{D=\infty}\leq \P{D\geq v_1}$ and apply Lemma~\ref{lemma:catch} (the condition in~\eqref{eq:infprodexp} is satisfied for any $\lambda>0$ by choosing $K=K(\lambda)$ large, see  Remark~\ref{rem:sumconvass}$(i)$ and $(ii)$) with 
		\be 
		Y=\sum_{\ell=1}^{m-1}\sum_{i=0}^{v_{\ell+1}-1} \!\!\!E^{(v_1\cdots v_\ell)}_i, \qquad X(i)=E^{(v)}_{i-1}, \qquad X(i)'=E^{(\varnothing)}_{i-1}, \qquad k=v_1+1,
		\ee 
		to arrive at the upper bound
		\be \ba 
		\sum_{\substack{v\in\cU_\infty\\ v_1\geq K}}{}&\Ps{v\text{ catches up to }\varnothing\text{ and }D_\varnothing=\infty}\\ 
		\leq{}& \sum_{m=1}^\infty \sum_{v_1=K}^\infty \sum_{v_2,\ldots v_m=1}^\infty \prod_{i=v_1}^\infty \E{\e^{\lambda(E_i'-E_i)}}\prod_{\ell=1}^m \P{D\geq v_\ell}\E{\exp\bigg(-\lambda \sum_{i=0}^{v_\ell-1}E_i\bigg)}.
		\ea\ee 
		We observe that $\E{\exp(\lambda(E_i'-E_i))}\geq 1$ by Jensen's inequality). Hence, as $v_1\geq K$ we can extend the range $i\geq v_1$ in the first product to $i\geq K$, and also  extend the range $v_1\geq K$ in the second sum to $v_1\geq 1$, to obtain an upper bound. As the different terms in the second product depend on distinct summation indices $v_1,\ldots, v_m$, we can switch the summation and product to obtain
		\be\ba 
		\prod_{i=K}^\infty{}& \E{\e^{\lambda(E_i'-E_i)}}\sum_{m=1}^\infty \bigg(\sum_{j=1}^\infty \P{D\geq j} \E{\exp\bigg(-\lambda \sum_{i=0}^{j-1} E_i\bigg)}\bigg)^m\\ 
		={}&\prod_{i=K}^\infty \E{\e^{\lambda(E_i'-E_i)}} \sum_{m=1}^\infty  \E{\sum_{j=1}^D\exp\bigg(-\lambda \sum_{i=1}^j E_i\bigg)}^m=\prod_{i=K}^\infty \E{\e^{\lambda(E_i'-E_i)}} \sum_{m=1}^\infty\widehat\mu(\lambda)^m,
		\ea \ee 
		where we use the definition of $\widehat\mu$, as in~\eqref{eq:muhat}, in the final step. We now choose $\lambda$ large enough so that $\widehat\mu(\lambda)<1$, which is possible by Assumption~\eqref{ass:mufin} (as follows from~\eqref{eq:muless1}). It is then clear that the upper bound is finite, which concludes the proof.
	\end{proof}
	
	We define
	\be 
	\pre(u)\coloneq \{v\in \cU_\infty: \cB(v)\leq \cB(u)\}\qquad\text{for }u\in\cU_\infty 
	\ee 
	as the set of \emph{predecessors} of $u$ in the branching process $\bp$. Fix $m\in\N$. We then define 
	\be \ba \label{eq:P}
	\cP_m\coloneq {}&\bigcup_{\substack{C\subseteq \pre(u)\\ |C|<m}}\{u\in\cU_\infty: D^{(u)}=\infty\text{, } u \text{ catches up to all }v\in \pre(u)\setminus C\}\\ 
	={}&\bigcup_{j=0}^{m-1} \{u\in\cU_\infty:D^{(u)}=\infty\text{, } u \text{ catches up to all but $j$ many }v\in \pre(u)\}\\ 
	={}&\{u\in\cU_\infty:D^{(u)}=\infty\text{, } u \text{ catches up to all but at most $m-1$ many }v\in \pre(u)\},
	\ea \ee 
	to be the set of all individuals $u$ whose offspring diverges to infinity and \emph{at some point in time} obtain a larger offspring than all but at most $m-1$ many of its predecessors. Note that this point in time need not be the same for all these predecessors. That is, it can catch up to a predecessor $v_1$ at time $t_1$ and catch up to a predecessor $v_2$ at time $t_2\neq t_1$.  A (potential) persistent $m$-hub $u$ needs to attain the $m^{\mathrm{th}}$ largest offspring for all but a finite amount of time. As a result, there can be at most $m-1$ many predecessors of $u$ that it does not catch up to. The main ingredient for the proof of Theorem~\ref{thrm:genpers}$(b)$ is to `extend' the result in Proposition~\ref{prop:catch} to the set $\cP_m$, as follows. 
	
	\begin{proposition}[$\cP_m$ is finite almost surely]\label{prop:P}
		Recall the CTPAVD branching process $(\bp(t))_{t\geq 0}$ defined in~\eqref{eq:bpdef}, with offspring, inter-birth times, and lifetimes as defined in (the paragraph before)~\eqref{eq:birthlifetime}. Assume that Assumptions~\eqref{ass:A1}, \eqref{ass:mufin}, and~\eqref{ass:varphi2} are satisfied and Assumption~\eqref{ass:A2} is not satisfied. Fix $m\in\N$ and recall $\cP_m$ from~\eqref{eq:P}. Then, $|\cP_m|$ is finite $\mathbb P_\cS$-almost surely.
	\end{proposition}
	
	As only finitely many individuals can catch up to all but at most $m-1$ of their predecessors, and out of all pairs of these vertices one must `win', as in the sense of~\eqref{eq:win}, Theorem~\ref{thrm:genpers}$(b)$ readily follows:
	
	\begin{proof}[Proof of Theorem~\ref{thrm:genpers}$(b)$  subject to Proposition~\ref{prop:P}]
		Fix $m\in\N$. By Lemma~\ref{lemma:tauwin}$(b)$, 
		\be \label{eq:capsetswin}
		\Ps{\bigcap_{\substack{S\subset \cU_\infty \\ |S|<\infty}}\bigcap_{u,v\in S}\text{Win}(u,v)\cup\text{Win}(v,u)}=1,
		\ee 
		as the intersections are over countable sets. For any finite set $S$, 
		\be 
		\bigcap_{u,v\in S}\text{Win}(u,v)\cup\text{Win}(v,u)\subseteq\{\exists w\in S\ \exists T\geq 0: \deg_t(w)=\max_{u\in S}\deg_t(u)\text{ for all }t\geq T\}.
		\ee 
		As a result, with $S=\cP_1$ being $\mathbb P_\cS$-almost surely finite by Proposition~\ref{prop:P}, it follows that 
		\be \label{eq:winner}
		\Ps{\exists w_1\in \cP_1\ \exists T_1\geq 0: \deg_t(w_1)=\max_{u\in \cP_1}\deg_t(u)\text{ for all }t\geq T_1}=1.
		\ee 
		Let us call   this individual the `winner' of $\cP_1$. Note that	this individual $w_1$ is unique, since otherwise, if there were winners $w_1$ and $w_1'$, then $\deg_t(w_1)=\deg_t(w_1')$ for all $t\geq \wt T$ and some time $\wt T$. But this would imply that infinitely many of their inter-birth times are of the same length, which occurs with probability zero.  We can then also recursively obtain that, for any $m\geq 2$ and with $\cW_m\coloneq \{w_1,\ldots,w_{m-1}\}$, 
		\be
		\Ps{\exists w_m\in \cP_m\setminus \cW_m\ \exists T_m\geq 0: \deg_t(w_m)= \max_{u\in \cP_m\setminus\cW_m} \deg_t(u)\text{ for all }t\geq T_m}=1.
		\ee 
		Now, for any $m\in\N$ and any $v\not\in \cP_m$ there exist at least $m$ predecessors  $v'_1,\ldots, v'_{m}$ of $v$ such that $\deg_t(v'_i)>\deg_t(v)$ for all $t\geq \cB(v'_i1)$ and all $i\in[m]$ (that is, we do not include the time when both $v$ and $v_i'$ have no children). Either $v_i'\in \cP_m$ for all $i\in[m]$ or there exists yet another $m$ predecessors $v''_1,\ldots, v''_{m}$ of $v_I'$ for some $I\in[m]$ such that $\deg_t(v_i'')>\deg_t(v_I')>\deg_t(v)$ for all $t\geq \cB(v''_i1)$ and all $i\in[m]$. This procedure terminates, since the number of predecessors of $v$ is finite, so that there exist  $u_1,\ldots, u_{m}\in \cP_m  $ such that $\deg_t(u_i)>\deg_t(v)$ for al $t\geq \cB(u_i1)$ and for all $i\in[m]$. As a result, for any $v\not\in \cP_m  $ there does not exist a time $t$ such that $v$ has the $m^{\mathrm{th}}$ largest offspring among all individuals in $\bp(t)$. 
		
		We now show that a persistent $m$-hub exists $\mathbb P_\cS$-almost surely for any $m\in\N$ by induction. Fix first $m=1$. The unique winner $w_1$ of $\cP_1=\cP_1\setminus \cW_1$ (with $\cW_1$ the empty set) has the largest offspring among all in $\cP_1$ for all $t\geq T_1$ by~\eqref{eq:winner}. Since any $v\not\in \cP_1$ does not attain the largest offspring for any time $t$, it follows that $w_1$ has the largest offspring among \emph{all} individuals for all time $t\geq T_1$. This implies that a unique persistent $1$-hub $I^{(1),\cont}=w_1$ exists $\mathbb P_\cS$-almost surely. 
		
		Now, suppose that a unique persistent $1$-hub $I^{(1),\cont}=w_2$, $2$-hub $I^{(2),\cont}=w_2$,$\ldots$, $(m-1)$-hub $I^{(m-1),\cont}=w_{m-1}$ exist $\mathbb P_\cS$-almost surely for some $m\geq 2$. By~\eqref{eq:winner} there exists a unique winner $w_m$ in $\cP_m\setminus\cW_m$. As a $1$-hub, $2$-hub,$\ldots$, $(m-1)$-hub exist, $w_m$ can have at most the $m^{\mathrm{th}}$ largest offspring in $\bp(t)$ for $t\geq \max_{i\in [m-1]}T_i$. Since any $v\not\in \cP_m $ never attains the $m^{\mathrm{th}}$ largest offspring in $\bp$ and eventually have fewer offspring than $w_m$, it follows that $w_m$ attains the $m^{\mathrm{th}}$ largest offspring in $\bp(t)$ for all time $t\geq T_m$. This implies that there exists a unique  persistent $m$-hub in $\bp$ $\mathbb P_\cS$-almost surely.
		
		Finally, we prove~\eqref{eq:distdeg}. Fix two individuals $u,v\in\cU_\infty$ such that $\cB(u),\cB(v)<\infty$ and $D^{(u)}=D^{(v)}=\infty$ and consider their number of children $\deg_t(u)$ and $\deg_t(v)$. Suppose that with positive probability there exists an increasing and diverging sequence $(t_k)_{k\in\N}$ of times, such that $\deg_{t_k}(u)=\deg_{t_k}(v)$ for all $k\in\N$. As the number of children of $u$ and $v$ tend to infinity with $k$ almost surely conditionally on survival (and that  both are born in the branching processes), this equality is equivalent to the existence of a sequence $(n_\ell)_{\ell\in\N}$ such that 
		\be 
		\cB(u)+\sum_{i=0}^{n_\ell-1}E^{(u)}_i\leq \cB(v)+\sum_{i=0}^{n_\ell-1}E^{(v)}_i<\cB(u)+\sum_{i=0}^{n_\ell}E^{(u)}_i\qquad \text{for all }\ell\in\N.
		\ee   
		We can rewrite this as 
		\be \label{eq:eqdeg}
		0\leq \cB(v)-\cB(u)+\sum_{i=0}^{n_\ell-1}E^{(v)}_i-E^{(u)}_i\leq E^{(u)}_{n_\ell}\qquad \text{for all }\ell\in\N.
		\ee 
		By Lemma~\ref{lemma:sumexp} and Remark~\ref{rem:sumconvass} the series $\sum_{i=0}^\infty( E^{(v)}_i-E^{(u)}_i)$ converges almost surely. Hence, the distribution of the random variable $\cB(v)-\cB(u)+\sum_{i=0}^\infty (E^{(v)}_i-E^{(u)}_i)$ does not contain an atom on $\R$ by~\cite[Lemma 3.2]{Iyer24}. 
		However, for any $\eps>0$, the inequality $E^{(u)}_{n_\ell}<\eps$ will hold for infinitely many $\ell$ almost surely by the Borel-Cantelli lemma, since 
		\be 
		\sum_{i=0}^\infty \P{E_i\geq \eps}\leq \sum_{i=0}^\infty \e^{-(b(i)+d(i))\eps}\leq \sum_{i=0}^K \e^{-(b(i)+d(i))\eps} +\sum_{i=K+1}^\infty \frac{1}{(b(i)+d(i))^2} <\infty.
		\ee 
		Here, the final sum is finite and $K\in\N$ is such that $\e^{-(b(i)+d(i))\eps}\leq (b(i)+d(i))^{-2}$ for all $i\geq K$, which is possible since $b(i)+d(i)$ tends to infinity, both due to Assumption~\eqref{ass:varphi2}. 
		
		The inequalities in~\eqref{eq:eqdeg} thus lead to a contradiction, so that any pair of vertices satisfies that their degrees are equal for only a finite amount of time. This directly implies~\eqref{eq:distdeg}, which concludes the proof.
	\end{proof}
	
	It remains to prove Proposition~\ref{prop:P}. We define the following (random) subset of $\cU_\infty$. For $K\in\N$ fixed and $u\in\cU_\infty$, set 
	\be \label{eq:Iu}
	I(u)\coloneq \begin{cases}
		K&\mbox{if }D^{(u)}=\infty,\\ 
		D^{(u)}&\mbox{if }D^{(u)}<\infty
	\end{cases}
	\ee
	Then, we define  
	\be \label{eq:Ukinf}
	\cU_K^{<\infty}=\cU_K^{<\infty}((D^{(u)})_{u\in\cU_\infty})\coloneq \{u\in \cU_\infty: u_j\leq I(u_1\cdots u_{j-1})\text{ for all }j\in[|u|]\}\cup\{\varnothing\}. 
	\ee 
	That is, given $(D^{(u)})_{u\in\cU_\infty}$, one constructs the set $\cU_K^{<\infty}$ iteratively as follows. Initialise $\cU^{(0)}\coloneq \{\varnothing\}$ and, given $\cU^{(i)}$ with $i\in\N_0$, we set
	\be 
	\cU^{(i+1)}\coloneq  \cU^{(i)}\cup \bigcup_{u\in \cU^{(i)}}\{u1,\ldots, uI(u)\}.
	\ee 
	Note that, if $u\in\cU^{(i)}$ for some $i$ and $D^{(u)}=0$, then we do not add any children $uj$ to $\cU^{(i+1)}$. This iterative process either never terminates, in which case $\cU^{<\infty}_K=\cup_{i=0}^\infty \cU^{(i)}$ is infinite, or it terminates at some (random) step $M\in\N_0$, so that $\cU^{<\infty}_K=\cU^{(M)}$ is finite. We observe that, conditionally on survival of the process $(\bp(t))_{t\geq 0}$, the set $\cU^{<\infty}_K$ contains at least one individual $u$ such that $D^{(u)}=\infty$, almost surely, when $\P{D=\infty}>0$ (which is the case when Assumption~\eqref{ass:A2} is satisfied). We also define 
	\be 
	\cU^{<\infty}\coloneq \cU_\infty^{<\infty}=\{u\in \cU_\infty\colon u_j\leq D^{(u)}\text{ for all }j\in [|u|]\}\cup \{\varnothing\}
	\ee 
	as the set of all individuals that are eventually born in $\bp$. 
	
	We now prove Proposition~\ref{prop:catch} for $m=2$. The case $m=1$ follows similarly, but requires fewer steps, whilst the difference for $m>2$ mainly lies in more notation and careful bookkeeping. As such, we decided to prove only the case $m=2$ to aid the reader. We provide a brief discussion after the proof where changes are necessary when $m=1$ or $m>2$.
	
	\begin{proof}[Proof of Proposition~\ref{prop:P} for $m=2$]
		We first note that $\cP_2\subseteq \cU^{<\infty}$, since any individual that catches up to all its predecessors clearly needs to be born.  We first construct a superset of $\cU^{<\infty}$, which then informs a superset for $\cP_2$.  Recall $K$ from Proposition~\ref{prop:catch}. We define 
		\be \label{eq:TTdef}
		T=T(K)\coloneq \inf\{t\geq 0 : |\{u\in \cU_\infty: D^{(u)}=\infty\text{ and } \cB(uK)\leq t\}|\geq2\}
		\ee  
		to be the stopping time at which the branching process contains at least two individuals that have produced (at least) $K$ many children and whose offspring $D$ is infinite. Let $V_1$ and $V_2$ denote these two individuals, where we assume without loss of generality that $\cB(V_1K)\leq \cB(V_2K)=T$. Note that, conditionally on survival, such individuals exist and that $T$ is finite $\mathbb P_\cS$-almost surely, since $\P{D=\infty}>0$ by Assumption~\eqref{ass:A2} (see~\cite[Lemma 5.3 and Corollary 6.4]{HeyLod25}). We then define 
		\be 
		\cA_-\coloneq  \cU_K^{<\infty}\cap \bp(T)\quad\text{and}\quad  
		\cA_+\coloneq \bigg(\bigcup_{u\in \cA_-}\{u1,\ldots, uI(u)\}\bigg)\setminus\cA_-, 
		\ee 
		where we recall $I(u)$ from~\eqref{eq:Iu} and $\cU_K^{<\infty}$ from~\eqref{eq:Ukinf}. We think of $\cA_+$ as the `boundary' of $\cA_-$ restricted to $\cU_K^{<\infty}$. That is, $\cA_+$ consists of all individuals in $\cU_K^{<\infty}$ that have not been born by time $T$ but whose parent has been born by time $T$.  We then have
		\be \label{eq:Ufinincl}
		\cU^{<\infty} \subseteq  \cA_-\cup\bigcup_{\substack{u\in \cA_-\\ D^{(u)}=\infty}}\{uv\colon v\in \cU_\infty, v_1>K\} \cup\bigcup_{u\in \cA_+}\{uv\colon v\in \cU_\infty\}.
		\ee 
		Indeed, let $w\in \cU^{<\infty}$. We show that $wj$ is in one of the sets on the right-hand side for each $j\in [D^{(w)}]$ when $D^{(w)}<\infty$ and for each $j\in\N$ when $D^{(w)}=\infty$. Since $\varnothing\in \cA_-$, this recursively implies the inclusion.
		
		First, suppose that $w\in\cA_-$. If $D^{(w)}<\infty$, then for any $j\in[D^{(w)}]$ the individual $wj\in \cA_-$ if $wj$ is born by time $T$, or it is in $\cA_+$ if it is not born by time $T$ (and so it is included in the set $\{wjv\colon v\in \cU_\infty\}$). If $D^{(w)}=\infty$, then for any $j\in[K]$ we can repeat the same argument as in the previous case, and for any $j>K$ the individual $wj$ is in $\{wv\colon v\in \cU_\infty, v_1>K\}$.
		
		Second, suppose that $w\in \{uv\colon v\in \cU_\infty, v_1>K\}$ for some $u\in \cA_-$ with $D^{(u)}=\infty$. It then directly follows that $wj\in \{uv\colon v\in \cU_\infty, v_1>K\}$ for any $j\in\N$ (irrespective of the value of $D^{(w)}$). 
		
		Third, suppose that $w\in \{uv\colon v\in \cU_\infty\}$ for some $u\in\cA_+$. Again, it directly follows that $wj\in \{uv\colon v\in\cU_\infty\}$ for any $j\in\N$ (irrespective of the value of $D^{(w)}$).
		
		Using this superset, we then construct a superset for $\cP_2$. To this end, we observe that $u\in \cP_2$  implies that $D^{(u)}=\infty$ and that, for any $w_1\neq w_2\in \text{Pre}(u)$, either $u$ catches up to $w_1$ or $u$ catches up to $w_2$, as $u$ catches up to all but at most one predecessor. Abbreviating `catches up to' by c.u.t.\ and using~\eqref{eq:Ufinincl},  we thus have the inclusion
		\be \ba \label{eq:P2incl}
		\cP_2\subseteq \cA_- {}&\cup \bigcup_{\substack{u\in \cA_-\setminus\{V_2\}\\ D^{(u)}=\infty}}\{uv\colon v\in \cU_\infty, v_1>K, D^{(uv)}=\infty, uv \text{ c.u.t. }u\text{ or }V_2\}\\
		&\cup \{V_2v\colon v\in \cU_\infty, v_1>K, D^{(V_2v)}=\infty, V_2v \text{ c.u.t. }V_1\text{ or }V_2\}\\
		&\cup \bigcup_{u\in \cA_+}\{uv\colon v\in \cU_\infty, D^{(uv)}=\infty, uv\text{ c.u.t. }V_1\text{ or }V_2\}.
		\ea\ee  
		Again, this inclusion is clear. By~\eqref{eq:Ufinincl}, we take all individuals in $u\in\cA_-\setminus\{V_2\}$ such that $D^{(u)}=\infty$  and require an individual in the set $\{uv\colon v\in \cU_\infty, v_1>K\}$ to catch up to either $u$ or to $V_2$. Similarly, we require any individual in the set $\{V_2v\colon v\in \cU_\infty, v_1>K\}$  to catch up to either  $V_1$ or $V_2$. Finally, for any individual in a set $\{uv\colon v\in \cU_\infty\}$  for some $u\in\ \cA_+$, we require they catch up to either $V_1$ or $V_2$. 
		
		It remains to show that each of the sets on the right-hand side of the inclusion are finite $\mathbb P_\cS$-almost surely. First, since $T$ is finite $\mathbb P_\cS$-almost surely it follows from Lemma~\ref{lemma:finbp} that $\cA_-\subseteq \bp(T)$ is finite $\mathbb P_\cS$-almost surely. As $I(u)$ is finite almost surely for each $u\in\cU_\infty$, it also follows that $\cA_+$ is finite $\mathbb P_\cS$-almost surely. We also have the inclusions 
		\be \ba \label{eq:A-sets}
		\{uv{}\colon{}& v\in \cU_\infty, v_1>K, D^{(uv)}=\infty, uv \text{ c.u.t. }u\text{ or }V_2\}\\
		\subseteq{}& \{uv\colon v\in \cU_\infty, v_1>K, D^{(uv)}=\infty, uv \text{ c.u.t. }u\}\\
		&\cup \{uv\colon v\in \cU_\infty, v_1>K, D^{(uv)}=\infty, uv \text{ c.u.t. }V_2\},
		\ea\ee 
		and 
		\be\ba\label{eq:V2sets}
		\{V_2v\colon {}&v\in \cU_\infty, v_1>K, D^{(V_2v)}=\infty, V_2v \text{ c.u.t. }V_1\text{ or }V_2\}\\
		\subseteq{}&\bigcup_{j=1}^2 \{V_2v\colon v\in \cU_\infty, v_1>K, D^{(V_2v)}=\infty, V_2v \text{ c.u.t. }V_j\},
		\ea\ee 
		and
		\be \ba \label{eq:A+sets}
		\{uv{}\colon{}& v\in \cU_\infty, D^{(uv)}=\infty, uv \text{ c.u.t. }V_1\text{ or }V_2\}\\
		\subseteq{}& \bigcup_{j=1}^2\{uv\colon v\in \cU_\infty,  D^{(uv)}=\infty, uv \text{ c.u.t. }V_j\}.
		\ea\ee
		We thus obtain the desired result if we   show that, for any $u\in \cA_-$ such that $D^{(u)}=\infty$, the sets on the right-hand side of~\eqref{eq:A-sets} are finite $\mathbb P_\cS$-almost surely, the sets on the right-hand side of~\eqref{eq:V2sets} are finite $\mathbb P_\cS$-almost surely, and, for any $u\in\cA_+$, the sets on the right-hand side of~\eqref{eq:A+sets} are finite $\mathbb P_\cS$-almost surely. We show that this is indeed the case, distinguishing per equation. The proof for each case is similar to the proof of Proposition~\ref{prop:catch}, but with slight, case specific, differences. 
		
		\textbf{Sets in~\eqref{eq:A-sets}. } It follows directly from Proposition~\ref{prop:catch} that the set $\{uv\colon v\in \cU_\infty, v_1>K, D^{(uv)}=\infty, uv \text{ c.u.t. }u\}$ is finite $\mathbb P_\cS$-almost surely for any individual $u\in \cA_-$ such that $D^{(u)}=\infty$ (by replacing $\varnothing$ by $u$ and `$v$ catches up to $\varnothing$' by $uv$ catches up to $u$'). Note that this also includes the case $u=V_2$, so that the set $\{V_2v\colon v\in \cU_\infty, v_1>K, D^{(uv)}=\infty, V_2v \text{ c.u.t. }V_2\}$ in~\eqref{eq:V2sets} is finite $\mathbb P_\cS$-almost surely as well. We continue with the sets
		\be 
		\{uv\colon v\in \cU_\infty, v_1>K, D^{(uv)}=\infty, uv \text{ c.u.t. }V_2\}\qquad\text{for }u\in \cA_-\setminus\{V_2\}\text{ such that }D^{(u)}=\infty. 
		\ee 
		Let $\cF_T$ be the $\sigma$-algebra generated by $(\bp(t); t\leq T)$. We note that $T$, $\cA_-$, $\cA_+$, $V_1$, and $V_2$ are all measurable with respect to $\cF_T$. Take $u\in \cA_-\setminus\{V_2\}$ with $D^{(u)}=\infty$. We distinguish between two cases. \\
		\textbf{Case (i)} $u\neq V_1$. Similar to the proof of Proposition~\ref{prop:catch}, we can write for any $uv$ with $v=v_1\cdots v_m\in \cU_\infty$ and with $m\in\N_0$, $v_1>K$, and $v_2,\ldots, v_m\in\N$,  
		\be \ba
		\mathbb P_\cS\big(uv{}&\text{ catches up to }V_2\text{ and }D^{(uv)}=\infty\,\big|\, \cF_T\big)\\
		=\mathbb P_\cS\Bigg({}&D^{(uv)}=\infty,D^{(uv_1\cdots v_{\ell-1})}\geq v_\ell\text{ for all }\ell\in[m],\\ 
		&  \exists j\in\N_0\colon \cB(u)+\sum_{\ell=1}^m\sum_{i=0}^{v_\ell-1}E^{(uv_1\cdots v_{\ell-1}) }_i+\sum_{i=0}^{K+j}E^{(uv)}_i\leq \cB(V_2K)+\!\!\!\sum_{i=K}^{K+j}E^{(V_2)}_i\,\Bigg|\, \cF_T\Bigg).
		\ea\ee 
		Since $V_2$ produces its $K^{\mathrm{th}}$ child at time $T$ by the definition of $T$ and $V_2$, it follows that $\cB(V_2K)=T$. Furthermore, since we suppose that $u\neq V_1$, we know that $u$ has produced at most $K$ children (in fact, fewer than $K$ children) by time $T$, so that $\cB(uK)\leq T$. We can thus omit $\cB(u)+\sum_{i=0}^{K-1} E^{(u)}_i=\cB(uK)$ and $\cB(V_2K)$ from the event to obtain the upper bound
		\be \ba
		\mathbb P_\cS\Bigg({}&D^{(uv)}=\infty,D^{(uv_1\cdots v_{\ell-1})}\geq v_\ell\text{ for all }\ell\in[m],\\ 
		&   \exists j\in\N_0\colon\sum_{i=K}^{v_1-1}E^{(u)}_i+\sum_{\ell=2}^m\sum_{i=0}^{v_\ell-1}E^{(uv_1\cdots v_{\ell-1})}_i+\sum_{i=0}^{K+j}E^{(uv)}_i\leq \sum_{i=K}^{K+j}E^{(V_2)}_i\,\Bigg|\, \cF_T\Bigg).
		\ea\ee    
		We now observe that all random variables in the conditional probability, with the exception of $D^{(u)}$, are independent of $\cF_T$. Since we know, however, that $D^{(u)}=\infty$ and thus $D^{(u)}\geq v_1$ is satisfied, the probability almost surely equals 
		\be \ba 
		\mathbb P_\cS\Bigg({}&D^{(uv)}=\infty,D^{(uv_1\cdots v_{\ell-1})}\geq v_\ell\text{ for all }\ell\in\{2,\ldots, m\},\\ 
		&  \exists j\in\N_0\colon \sum_{i=K}^{v_1-1}E^{(u)}_i+\sum_{\ell=2}^m\sum_{i=0}^{v_\ell-1}E^{(uv_1\cdots v_{\ell-1})}_i+\sum_{i=0}^{K+j}E^{(uv)}_i\leq \sum_{i=K}^{K+j}E^{(V_2)}_i\Bigg).
		\ea \ee 
		By bounding $\Ps{\cE}\leq \P{\cE}/\P{\cS}$ for any event $\cE$, using the independence of the offspring and inter-birth random variables, and applying Lemma~\ref{lemma:catch} (where we note that the condition in~\eqref{eq:infprodexp} is satisfied for any $\lambda>0$ by choosing $K$ large enough, see Remark~\ref{rem:sumconvass}), we can bound the probability from above by 
		\be\ba  
		\frac{\P{D=\infty}}{\P{\cS}}{}&\prod_{i=K}^\infty \E{\e^{\lambda(E_i'-E_i)}} \E{\exp\bigg(-\lambda \sum_{i=0}^{K-1} E_i\bigg)}\E{\exp\bigg(-\lambda \sum_{i=K}^{v_1-1}E_i\bigg)}\\
		&\times \prod_{\ell=2}^m \P{D\geq v_\ell}\E{\exp\bigg(-\lambda \sum_{i=0}^{v_\ell-1}E_i\bigg)}. 
		\ea \ee 
		Since the inter-birth random variables in the final two terms on the first line are independent, we can combine them into a single expected value. By also bounding $\P{D=\infty}\leq \P{D\geq v_1}$, we thus obtain the upper bound
		\be 
		\frac{1}{\P{\cS}}\prod_{i=K}^\infty \E{\e^{\lambda(E_i'-E_i)}}\prod_{\ell=1}^m \P{D\geq v_\ell}\E{\exp\bigg(-\lambda \sum_{i=0}^{v_\ell-1}E_i\bigg)}.
		\ee 
		We set $c=\P{\cS}^{-1}$. By summing over all $uv$ with $v\in \cU_\infty$ such that $v_1>K$, we thus obtain 
		\be \ba 
		\mathbb E_\cS\Big[{}&|\{uv\colon v\in \cU_\infty, v_1>K,uv\text{ catches up to }V_2\text{ and }D^{(uv)}=\infty\}|\,\Big|\, \cF_T\Big]\\
		\leq{}& \sum_{m=1}^\infty \sum_{v_1=K+1}^\infty \sum_{v_2=1}^\infty \cdots \sum_{v_m=1}^\infty c\prod_{i=K}^\infty \E{\e^{\lambda(E_i'-E_i)}}\prod_{\ell=1}^m \P{D\geq v_\ell}\E{\exp\bigg(-\lambda \sum_{i=1}^{v_\ell}E_i\bigg)}\\
		&\leq  c\prod_{i=K}^\infty \E{\e^{\lambda(E_i'-E_i)}}\sum_{m=1}^\infty \Bigg(\sum_{v=1}^\infty \P{D\geq v}\E{\exp\bigg(-\lambda \sum_{i=0}^{v-1}E_i\bigg)}\Bigg)^m,
		\ea \ee 
		where in the last step we interchanged summation and products and extended the range of the sum over $v_1$ to all $v_1\in\N$. We observe that the inner sum on the last line equals $\widehat\mu(\lambda)$, so that by Assumption~\eqref{ass:mufin} we can take $\lambda>0$ large enough such that $\widehat\mu(\lambda)<1$, as follows from~\eqref{eq:muless1}. The upper bound is thus finite, from which it follows that 
		\be 
		\Ps{\{uv\colon v\in \cU_\infty, v_1>K,uv\text{ catches up to }V_2\text{ and }D^{(uv)}=\infty\}|<1\,\Big|\, \cF_T}=1,\quad \mathbb P_\cS\text{-a.s.}
		\ee 
		\textbf{Case (ii)} $u=V_1$. In this case, let 
		\be 
		\cT_T(V_1)\coloneq \{V_1v\colon v\in \cU_\infty, v_1>K, \cB(V_1v)\leq T\}
		\ee 
		be the sub-tree rooted at $V_1$, restricted to the children $V_1(K+1),V_1(K+2),\ldots$ and their descendants that are born by time $T$. It is clear that  $\cT_T(V_1)\subset \bp(T)$, so that it is finite $\mathbb P_\cS$-almost surely, and that it is $\cF_T$ measurable. For $w\in \cT_T(V_1) $, we define
		\be 
		K(w)\coloneq \max\{0\leq k\leq D^{(w)}: wk\in \cT_T(V_1) \}
		\ee 
		to be the `last' child of $w$ that is included in $\cT_T(V_1) $. We then have the inclusion
		\be \ba 
		\{V_1v:{}& v\in \cU_\infty, v_1>K,D^{(V_1v)}=\infty,  V_1v\text{ c.u.t.\ }V_2\}\\ 
		&\subseteq  \cT_T(V_1)\cup \bigcup_{w\in \cT_T(V_1) }\{wv: v\in \cU_\infty, v_1>K(w), D^{(wv)}=\infty, wv \text{ c.u.t.\ }V_2\}.
		\ea \ee 
		As a result, we have
		\be \ba \label{eq:wexp}
		\mathbb E_\cS{}&\Big[|\{V_1v: v\in \cU_\infty, v_1>K, \cB(V_1v)>T,D^{(V_1v)}=\infty,  V_1v\text{ c.u.t.\ }V_2\}|\,\Big|\, \cF_T\Big]\\ 
		\leq{}&|\cT_T(V_1)|+\!\!\!\! \sum_{w\in \cT_T(V_1) }\!\!\!\! \!\!\Es{|\{wv: v\in \cU_\infty, v_1>K(w), D^{(wv)}=\infty, wv \text{ c.u.t.\ }V_2\}|\,\Big|\, \cF_T}.
		\ea \ee 
		As already concluded, the first term on the right-hand side is finite $\mathbb P_\cS$-almost surely. Each expected value in the sum on the right-hand side can be written as
		\be\label{eq:probsum}
		\sum_{m=1}^\infty \sum_{v_1=K(w)+1}^{D^{(w)}}\sum_{v_2=1}^\infty \cdots \sum_{v_m=1}^\infty \Ps{D^{(wv)}=\infty, wv\text{ c.u.t.\ }V_2\,\Big|\, \cF_T}, 
		\ee
		and each such probability equals 
		\be\ba  
		\mathbb P_\cS\bigg({}&\exists j\in\N_0\colon \cB(wK(w))+\!\!\!\!\!\sum_{i=K(w)}^{v_1-1}\!\!\!\!E^{(w)}_i+\sum_{\ell=2}^m\sum_{i=0}^{v_\ell-1}E^{(wv_1\cdots v_{\ell-1})}_i+\!\sum_{i=0}^{K+j} \!E^{(wv)}_i\!\leq \cB(V_2K)+\! \sum_{i=K}^{K+j}\!  E^{(V_2)}_i,\\ 
		&D^{(wv)}=\infty, D^{(wv_1\cdots v_{\ell-1})}\geq v_\ell\text{ for all }\ell\in[m]  \,\bigg|\, \cF_T\bigg).
		\ea\ee 
		We observe that $\cB(w(K(w)+1))=\cB(wK(w))+E^{(w)}_{K(w)}>T=\cB(V_2K)$ by the definition of $K(w)$, so that we obtain the upper bound 
		\be \ba 
		\mathbb P_\cS\bigg({}&\exists j\in\N_0: \!\!\!\!\sum_{i=K(w)+1}^{v_1-1}\!\!\!\!E^{(w)}_i+\sum_{\ell=2}^m\sum_{i=0}^{v_\ell-1}E^{(wv_1\cdots v_{\ell-1})}_i+\!\sum_{i=0}^{K+j} \!E^{(wv)}_i\!\leq  \sum_{i=K}^{K+j}\!  E^{(V_2)}_i,\\ 
		&D^{(wv)}=\infty, D^{(wv_1\cdots v_{\ell-1})}\geq v_\ell\text{ for all }\ell\in[m]  \,\bigg|\, \cF_T\bigg).\textbf{}
		\ea\ee 
		Only the random variables $D^{(w)}$, $K(w)$, and $V_2$ depend on $\cF_T$. Also, the random variable $K(w)$ is independent of the inter-birth times $E^{(w)}_i$ for $i\geq K(w)+1$. As a result, we can apply Lemma~\ref{lemma:catch} (where we note that the condition in~\eqref{eq:infprodexp} is satisfied for any $\lambda>0$ by choosing $K$ large enough, see Remark~\ref{rem:sumconvass}) to obtain the upper bound 
		\be\ba 
		\frac{\P{D=\infty}}{\P{\cS}}{}&\ind_{\{D^{(w)}\geq v_1\}} \prod_{i=K}^\infty \E{\e^{\lambda(E_i'-E_i)}}\E{\exp\bigg(-\lambda \sum_{i=0}^{K-1}E_i\bigg)} \prod_{i=K(w)+1}^{v_1-1}\E{\e^{-\lambda E_i}}\\ 
		&\times \prod_{\ell=2}^m \P{D\geq v_\ell}\E{\exp\bigg(-\lambda \sum_{i=0}^{v_\ell-1}E_i\bigg)}.
		\ea\ee 
		Substituting this upper bound in~\eqref{eq:probsum}, we thus arrive at the upper bound 
		\be\ba 
		\mathbb E_\cS\Big[{}&|\{wv: v\in \cU_\infty, v_1>K(w), D^{(wv)}=\infty, wv \text{ c.u.t.\ }V_2\}|\,\Big|\, \cF_T\Big]\\
		\leq{}& 	\sum_{m=1}^\infty \sum_{v_1=K(w)+1}^{D^{(w)}}\sum_{v_2=1}^\infty \cdots \sum_{v_m=1}^\infty \frac{\P{D=\infty}}{\P{\cS}}  \prod_{i=K}^\infty  \E{\e^{\lambda(E_i'-E_i)}}\E{\exp\bigg(-\lambda \sum_{i=0}^{K-1}E_i\bigg)} \\ 
		&\times \prod_{i=K(w)+1}^{v_1-1}\E{\e^{-\lambda E_i}}\prod_{\ell=2}^m \P{D\geq v_\ell}\E{\exp\bigg(-\lambda \sum_{i=0}^{v_\ell-1}E_i\bigg)}.
		\ea \ee 
		Switching summation and products yields 
		\be\ba 
		\frac{\P{D=\infty}}{\P{\cS}} \prod_{i=K}^\infty{}& \E{\e^{\lambda(E_i'-E_i)}}\E{\exp\bigg(-\lambda \sum_{i=0}^{K-1}E_i\bigg)}\\
		\times  \sum_{v_1=K(w)+1}^{D^{(w)}}{}&\prod_{i=K(w)+1}^{v_1-1}\!\!\!\E{\e^{-\lambda E_i}} \sum_{m=1}^\infty\Bigg(\sum_{v=1}^\infty\P{D\geq v}\E{\exp\bigg(-\lambda \sum_{i=0}^{v-1}E_i\bigg)}\Bigg)^{m-1}.
		\ea\ee 
		As in Case (i), we can write the inner sum as $\widehat\mu(\lambda)$ and by Assumption~\eqref{ass:mufin} we can choose $\lambda>0$ such that $\widehat\mu(\lambda)<1$ (see the proof of Lemma~\ref{lemma:finbp}). We then observe that 
		\be \ba 
		\sum_{v_1=K(w)+1}^{D^{(w)}}\prod_{i=K(w)+1}^{v_1-1}\!\!\!\!\!\!\E{\e^{-\lambda E_i}}&\leq  
		\sum_{v_1=K(w)+1}^\infty  \prod_{i=K(w)+1}^{v_1-1}\!\!\!\!\!\E{\e^{-\lambda E_i}}\\
		&\leq \P{D=\infty}^{-1}\!\!\!\prod_{i=0}^{K(w)}\!\E{\e^{-\lambda E_i}}^{-1} \sum_{v=1}^\infty \P{D\geq v}\prod_{i=1}^{v-1} \E{\e^{-\lambda E_i}}\\
		&=\P{D=\infty}^{-1}\widehat\mu(\lambda)\prod_{i=1}^{K(w)}\E{\e^{-\lambda E_i}}^{-1} .
		\ea\ee 
		This upper bound is $\mathbb P_\cS$-almost surely finite for any $w\in \cT_T$, since $K(w)\leq K$ by the definition of $T$ and $V_1,V_2$ and since we can take $\lambda>0$ such that $\widehat\mu(\lambda)<\infty$ by Assumption~\eqref{ass:mufin}. As a result, since $|\cT_T(V_1)|$ is finite $\mathbb P_\cS$-almost surely,  the upper bound in~\eqref{eq:wexp} is finite $\mathbb P_\cS$-almost surely. We thus conclude that sets on the left-hand side of~\eqref{eq:A-sets} are finite $\mathbb P_\cS$-almost surely conditionally on $\cF_T$, for any $u\in \cA_-\setminus\{V_2\}$ . 
		
		\textbf{Sets in~\eqref{eq:V2sets}. } We already concluded that the set $\{V_2v\colon v\in \cU_\infty, v_1>K, D^{(V_2v)}=\infty, V_2v \text{ c.u.t. }V_2\}$ is finite $\mathbb P_\cS$-almost surely by Proposition~\ref{prop:catch}, so that we are left to bound the size of the set $\{V_2v\colon v\in \cU_\infty, v_1>K, D^{(V_2v)}=\infty, V_2v \text{ c.u.t. }V_1\}$.  Define 
		\be \label{eq:Kj}
		K_j\coloneq \sup\{k\in\N\colon \cB(V_jk)\leq T\}\qquad \text{for }j\in\{1,2\}. 
		\ee   
		Here, we note that $K_2=K$ by definition of $T$ and $V_2$, and $K_1\geq K$. We then write
		\be\ba \label{eq:V2setexp}
		\mathbb E_\cS{}&\Big[|\{V_2v\colon v\in \cU_\infty, v_1>K, D^{(V_2v)}=\infty, V_2v\text{ c.u.t.\ }V_1\}|\,\Big|\, \cF_T\Big]\\
		&=\sum_{m=1}^\infty \sum_{v_1=K+1}^\infty \sum_{v_2=1}^\infty \cdots \sum_{v_m=1}^\infty \Ps{D^{(V_2v)}=\infty, V_2v\text{ c.u.t.\ }V_1\,|\, \cF_T}.
		\ea \ee 
		Since $K_2=K$ by definition of $T$ and $V_1,V_2$, we can write each probability as 
		\be\ba 
		\mathbb P_\cS\bigg({}&\exists j\in\N_0\colon \cB(V_2K)+\sum_{i=K}^{v_1-1}E^{(V_2)}_i+\sum_{\ell=2}^m \sum_{i=0}^{v_\ell-1}E^{(V_2v_1\cdots v_{\ell-1})}_i+\!\!\sum_{i=0}^{K_1+j}\!E^{(V_2v)}_i\leq \cB(V_1K_1)+\!\!\sum_{i=K_1}^{K_1+j}\!E^{(V_1)}_i,\\
		&
		D^{(V_2v)}=\infty, D^{(V_2v_1\cdots v_{\ell-1})}\geq v_\ell\text{ for all }\ell\in\{2,\ldots, m\}\,\bigg|\, \cF_T\bigg).
		\ea \ee 
		Since $\cB(V_1K_1)\leq \cB(V_2K)=T\leq \cB(V_1(K_1+1))$ by the definition of $T$ and $K_1,K_2$, and by omitting the event that $D^{(V_2v)}=\infty$, we can bound this probability from above by 
		\be\ba 
		\mathbb P_\cS\bigg({}&\exists j\in\N_0\colon \!\sum_{i=K}^{v_1-1}\!E^{(V_2)}_i+\sum_{\ell=2}^m \sum_{i=0}^{v_\ell-1}E^{(V_2v_1\cdots v_{\ell-1})}_i+\!\!\!\sum_{i=1}^{K_1+j}\!E^{(V_2v)}_i\leq \cB(V_1K_1)-T+\!\sum_{i=K_1}^{K_1+j}\!E^{(V_1)}_i,\\
		& D^{(V_2v_1\cdots v_{\ell-1})}\geq v_\ell\text{ for all }\ell\in\{2,\ldots, m\}\,\bigg|\, \cF_T\bigg). 
		\ea\ee
		Only the random variables $V_1,V_2,K_1,T$, and $E^{(V_1)}_{K_1}$ depend on $\cF_T$. All other random variables are independent of $\cF_T$. By the memoryless property of the exponential distribution and the strong Markov property, we have 
		\be 
		\cB(V_1K_1)-T+E^{(V_1)}_{K_1}\sim E_{K_1}', 
		\ee 
		where $ E_{K_1}'$ is an independent copy of $E^{(V_1)}_{K_1}$, which is also independent of $\cF_T$. The probability thus equals 
		\be\ba 
		\mathbb P_\cS\bigg({}&\exists j\in\N_0\colon \sum_{i=K}^{v_1-1}\!E^{(V_2)}_i+\sum_{\ell=2}^m \sum_{i=0}^{v_\ell-1}E^{(V_2v_1\cdots v_{\ell-1})}_i+\!\!\!\sum_{i=1}^{K_1+j}\!E^{(V_2v)}_i\leq E_{K_1}'+\!\sum_{i=K_1}^{K_1+j}\!E^{(V_1)}_i,\\
		&
		D^{(V_2v_1\cdots v_{\ell-1})}\geq v_\ell\text{ for all }\ell\in\{2,\ldots, m\}\,\bigg|\, \cF_T\bigg). 
		\ea\ee
		We now apply Lemma~\ref{lemma:catch} (where we note that the condition in~\eqref{eq:infprodexp} is satisfied for any $\lambda>0$ by choosing $K$ large enough, see Remark~\ref{rem:sumconvass}), to obtain the upper bound 
		\be\ba \label{eq:V2setineq}
		\frac{1}{\P{\cS}}{}&\prod_{i=K_1}^\infty \E{\e^{\lambda(E_i'-E_i)}}\bigg(\prod_{i=0}^{K_1}\E{\e^{-\lambda E_i}}\bigg)\P{D\geq v_1}\E{\exp\bigg(-\lambda \sum_{i=K}^{v_1-1}E_i\bigg)}\\
		&\times \prod_{\ell=2}^m \P{D\geq v_\ell}\E{\exp\bigg(-\lambda \sum_{i=0}^{v_\ell-1}E_i\bigg)}.
		\ea\ee  
		Since $\E{\exp(\lambda(E_i'-E_i))}\geq 1$ by Jensen's inequality and  $\E{\exp(-\lambda E_i)}\leq 1$ for any $i\in\N$ and as $K_1\geq K$ by definition, we obtain an upper bound when replacing $K_1$ with $K$ in the first two products. Then, by the independence of the inter-birth times, we have
		\be 
		\bigg(\prod_{i=0}^{K}\E{\e^{-\lambda E_i}}\bigg)\P{D\geq v_1}\E{\exp\bigg(-\lambda\!\!\! \sum_{i=K}^{v_1-1}\!\!\!E_i\bigg)}=\P{D\geq v_1}\E{\exp\bigg(-\lambda \sum_{i=0}^{v_1-1}E_i\bigg)}.
		\ee 
		The right-hand side can thus be incorporated in the product on the second line of~\eqref{eq:V2setineq}, so that this expression (with $K_1$ bounded by $K$) simplifies to
		\be
		\frac{1}{\P{\cS}}{}\prod_{i=K}^\infty \E{\e^{\lambda(E_i'-E_i)}}\prod_{\ell=1}^m \P{D\geq v_\ell}\E{\exp\bigg(-\lambda \sum_{i=0}^{v_\ell-1}E_i\bigg)}.
		\ee 
		Using this upper bound in~\eqref{eq:V2setexp} and writing $c=1/\P{\cS}$, we thus obtain 
		\be\ba 
		\mathbb E_\cS\Big[{}&|\{V_2v\colon v\in \cU_\infty, v_1>K, D^{(V_2v)}=\infty, V_2v\text{ c.u.t.\ }V_1\}|\,\Big|\, \cF_T\Big]\\
		\leq{}& \sum_{m=1}^\infty \sum_{v_1=K+1}^\infty \sum_{v_2=1}^\infty \cdots \sum_{v_m=1}^\infty c\prod_{i=K}^\infty \E{\e^{\lambda(E_i'-E_i)}}\prod_{\ell=1}^m \P{D\geq v_\ell}\E{\exp\bigg(-\lambda \sum_{i=0}^{v_\ell-1}E_i\bigg)}.
		\ea\ee 
		Switching summation and products and extending the range of the variable $v_1$ to $v_1\in\N$ yields the upper bound
		\be
		c \prod_{i=K}^\infty  \E{\e^{\lambda(E_i'-E_i)}}\sum_{m=1}^\infty  \Bigg(\sum_{v=1}^\infty  \P{D\geq v}\E{\exp\bigg(-\lambda \sum_{i=0}^{v-1}E_i\bigg)}\Bigg)^m. 
		\ee 
		By writing the inner sum as $\widehat\mu(\lambda)$ and using that, by Assumption~\eqref{ass:mufin}, we can take $\lambda>0$ large enough so that $\widehat\mu(\lambda)<1$, as follows from~\eqref{eq:muless1}, we conclude that the upper bound is finite. Using this in~\eqref{eq:V2sets} combined with the earlier conclusion that the set $\{V_2v\colon v\in\cU_\infty, v_1>K, D^{(V_2v)}=\infty, V_2v\text{ c.u.t.\ }V_1\}$ is finite $\mathbb P_\cS$-almost surely, we obtain
		\be
		\Ps{|\{V_2v\colon v\in \cU_\infty, v_1>K, D^{(V_2v)}=\infty, V_2v\text{ c.u.t.\ }V_1\text{ or }V_2\,\Big|\, \cF_T}=1 \qquad \mathbb P_\cS\text{-almost surely}. 
		\ee 
		\textbf{Sets in~\eqref{eq:A+sets}. } This proof follows a similar approach as for the sets in~\eqref{eq:V2sets}. By conditioning on $\cF_T$, we have 
		\be\ba \label{eq:expecsum}
		\mathbb E_\cS{}&\Big[|\{uv\colon v\in \cU_\infty,   D^{(uv)}=\infty, uv \text{ c.u.t. }V_j\}|\,\Big|\, \cF_T\Big]\\
		&=\sum_{m=1}^\infty \sum_{v_1=1}^\infty \cdots \sum_{v_m=1}^\infty \Ps{D^{(uv)}=\infty, uv \text{ c.u.t. }V_j\,\Big|\, \cF_T}.
		\ea\ee 
		Recalling $K_j$ from~\eqref{eq:Kj}, observing that that $K_1\geq K$ and $K_2=K$ by definition of $T$ and $V_1,V_2$, we can write each of the probabilities in~\eqref{eq:expecsum} as 
		\be\ba 
		\mathbb P_\cS\bigg({}&\exists n\in\N_0\colon \cB(u)+\sum_{\ell=1}^m \sum_{i=0}^{v_\ell-1}E^{(uv_1\cdots v_{\ell-1})}_i+\!\!\!\sum_{i=0}^{K_j+n}\!E^{(uv)}_i\leq \cB(V_jK_j)+\!\!\!\sum_{i=K_j}^{K_j+n}\!E^{(V_j)}_i, \\
		&D^{(uv)}=\infty, D^{(uv_1\cdots v_{\ell-1})}\geq v_\ell\text{ for all }\ell \in [m]\,\bigg|\, \cF_T\bigg).
		\ea\ee 
		Since $u\in \cA_+$, we know that $\cB(u)>T\geq \cB(V_jK_j)$, so that we obtain the upper bound 
		\be \ba 
		\mathbb P_\cS\bigg({}&\exists n\in\N_0\colon \sum_{\ell=1}^m \sum_{i=0}^{v_\ell-1}E^{(uv_1\cdots v_{\ell-1})}_i+\!\!\!\sum_{i=0}^{K_j+n}\!E^{(uv)}_i\leq \cB(V_jK_j)-T+\!\!\!\sum_{i=K_j}^{K_j+n}\!E^{(V_j)}_i, \\
		&D^{(uv)}=\infty, D^{(uv_1\cdots v_{\ell-1})}\geq v_\ell\text{ for all }\ell \in [m]\,\bigg|\, \cF_T\bigg).
		\ea \ee 
		Only the random variables $K_j$, $T$,  $V_j$, and $E^{(V_j)}_{K_j}$  depend on $\cF_T$ (the latter only for $j=1$). Since the inter-birth times are exponentially distributed and by the strong Markov property, it follows that
		\be 
		\cB(V_jK_j)+E^{(V_j)}_{K_j}-T \sim E_{K_j}' \qquad \text{for both }j\in\{1,2\}, 
		\ee 
		where $E_{K_j}'$ is an i.i.d.\ copy of $E^{(V_j)}_{K_j}$ that is independent of $\cF_T$. In fact, for $j=2$, since $K_2=K$  we have $\cB(V_2K_2)+E^{(V_2)}_{K_2}-T=E^{(V_2)}_{K_2}$, which is independent of $\cF_T$. For both $j=1$ and $j=2$, we thus obtain 
		\be \ba 
		\mathbb P_\cS\bigg({}&\exists n\in\N_0\colon \sum_{\ell=1}^m \sum_{i=0}^{v_\ell-1}E^{(uv_1\cdots v_{\ell-1})}_i+\!\!\!\sum_{i=0}^{K_j+n}\!E^{(uv)}_i\leq E_{K_j}'+\!\!\!\sum_{i=K_j+1}^{K_j+n}\!E^{(V_j)}_i, \\
		&D^{(uv)}=\infty, D^{(uv_1\cdots v_{\ell-1})}\geq v_\ell\text{ for all }\ell \in [m]\,\bigg|\, \cF_T\bigg).
		\ea \ee 
		We can then apply Lemma~\ref{lemma:catch} (where we note that the condition in~\eqref{eq:infprodexp} is satisfied for any $\lambda>0$ by choosing $K$ large enough, see Remark~\ref{rem:sumconvass}), to obtain the upper bound 
		\be\ba 
		\frac{1}{\P{\cS}}\prod_{i=K_j}^\infty\!\! \E{\e^{\lambda (E_i'-E_i)}}\prod_{i=0}^{K_j-1}\E{\e^{-\lambda E_i}}\prod_{\ell=1}^m\Bigg(  \P{D\geq v_\ell}\E{\exp\bigg(-\lambda \sum_{i=0}^{v_\ell-1}E_i\bigg)}\Bigg).
		\ea\ee 
		Since $\E{\exp(\lambda(E_i'-E_i))}\geq 1$ by Jensen's inequality and that $\E{\exp(-\lambda E_i)}\leq 1$ for any $i\in\N$ and as $K_j\geq K$ for both $j\in\{1,2\}$, we can obtain an upper bound by replacing $K_j$ with $K$. Substituting this upper bound in~\eqref{eq:expecsum} and writing $c=\P{\cS}^{-1}$, we obtain 
		\be\ba 
		\mathbb E_\cS{}&\Big[|\{uv\colon v\in \cU_\infty,   D^{(uv)}=\infty, uv \text{ c.u.t. }V_j\}|\,\Big|\, \cF_T\Big]
		\leq \sum_{m=1}^\infty \sum_{v_1=1}^\infty \cdots \sum_{v_m=1}^\infty  \prod_{i=K}^\infty \E{\e^{\lambda (E_i'-E_i)}}\\
		&c \times \prod_{i=0}^{K-1}\E{\e^{-\lambda E_i}}\prod_{\ell=1}^m\bigg(  \P{D\geq v_\ell}\E{\exp\bigg(-\lambda \sum_{i=0}^{v_\ell-1}E_i\bigg)}\bigg).
		\ea\ee 
		By switching summation and products, this equals 
		\be \ba 
		c \prod_{i=K}^\infty \E{\e^{\lambda (E_i'-E_i)}}  \prod_{i=0}^{K-1}\E{\e^{-\lambda E_i}}\sum_{m=1}^\infty\Bigg( \sum_{v=1}^\infty    \P{D\geq v }\E{\exp\bigg(-\lambda \sum_{i=0}^{v-1}E_i\bigg)} \Bigg)^m.
		\ea\ee 
		By writing the inner sum as $\widehat\mu(\lambda)$ and using that, by Assumption~\eqref{ass:mufin}, we can take $\lambda>0$ large enough so that $\widehat\mu(\lambda)<1$, as follows from~\eqref{eq:muless1}, we conclude that the upper bound is finite. Using this in~\eqref{eq:A+sets},  we thus have that 
		\be 
		\Ps{|\{uv{}\colon  v\in \cU_\infty, D^{(uv)}=\infty, uv \text{ c.u.t. }V_1\text{ or }V_2\}|<\infty\,\Big|\, \cF_T}=1 \qquad \mathbb P_\cS\text{-almost surely}.
		\ee 
		We can thus use the inclusion for $\cP_2$ in~\eqref{eq:P2incl}, combined with having shown that all sets on the right-hand side of~\eqref{eq:A-sets}, \eqref{eq:V2sets}, and~\eqref{eq:A+sets} are finite $\mathbb P_\cS$-almost surely conditionally on $\cF_T$, to finally arrive at 
		\be 
		\Ps{|\cP_2|<\infty\,|\, \cF_T}=1\qquad \mathbb P_\cS\text{-almost surely}.
		\ee 
		By taking expectation with respect to $\cF_T$, we thus arrive at the desired result and conclude the proof.
	\end{proof} 

	\textbf{Adaptations for the proof of Proposition~\ref{prop:catch} with $m=1$ and $m>2$. } When $m=1$, the proof simplifies somewhat. In this case, we instead define the stopping time $T$, as in~\eqref{eq:TTdef}, as the time at which there is at least $1$ individual $u$ such that $D^{(u)}=\infty$ that has produced at least $K$ children by time $T$. As the inter-birth times are exponentially distributed, it follows that this individual, say $V_1$, has produced exactly $K$ children at time $T$. The domination of $\cP_1$ is similar as in~\eqref{eq:P2incl}, though now we no longer need the second line (as $V_2$ does not exist in this case). Furthermore, in the first line we require the individuals $uv$ to catch up to $u$ only, and in the third line to $V_1$ only. Bounding the set on the first line then directly follows from Proposition~\ref{prop:catch}, and bounding the sets on the third line follows in an analogous way to bounding the sets on the right-hand side of~\eqref{eq:A+sets}. 
	
	When $m>2$, we define $T$ as the first time that $m$ many individuals $V_1,\ldots, V_m$ such that $D^{(V_i)}=\infty$ have produced at least $K$ children. We bound $\cP_m$ in the same way as in~\eqref{eq:P2incl}, namely
	\be \ba 
	\cP_m\subseteq \cA_-{}&\cup\!\!\!\!\!\!\!\!\!\!\bigcup_{\substack{u\in\cA_-\setminus\{V_2,\ldots, V_m\}\\ D^{(u)}=\infty}}\!\!\!\!\!\!\!\!\!\!\!\!\!\!\!\{uv\colon v\in \cU_\infty, v_1>K, D^{(uv)}=\infty, uv\text{ c.u.t.\ }u\text{ or }V_i\text{ for some }i\in[m]\setminus\{1\}\}\\
	&\cup \bigcup_{j=2}^m \{V_jv\colon v\in \cU_\infty, v_1>K, D^{(uv)}=\infty, uv\text{ c.u.t.\ }V_i\text{ for some }i\in[m]\}\\
	&\cup\bigcup_{u\in\cA_+}\{uv\colon v\in \cU_\infty, D^{(uv)}=\infty, uv\text{ c.u.t.\ }V_i\text{ for some }i\in[m]\}.
	\ea\ee   
	Proving that each of the sets on the right-hand side is finite then follows similar arguments as in the proof for $m=2$. 

	\section{Persistence in the CTPAVD branching process}\label{sec:finlife}
	
	In this section we prove Theorem~\ref{thrm:conv}. Unlike in the analysis carried out in Section~\ref{sec:inflife}, a persistent $m$-hub, as in the sense of~\eqref{eq:pershub}, does \emph{not} exist $\mathbb P_\cS$-almost surely for any $m\in\N$, since individuals live an almost surely finite time (i.e.\ $L<\infty$ a.s.). Indeed, an almost surely finite lifetime implies that $O_t^\cont$ diverges to infinity with $t$ $\mathbb P_\cS$-almost surely. Since $I_t^{(1),\cont}\geq O_t^\cont$ for all $t$, we thus also have $I_t^{(1),\cont}\to\infty$ $\mathbb P_\cS$-almost surely. To show instead that persistence occurs, in the sense of~\eqref{eq:pers}, we use a more quantitative approach where we provide precise asymptotic expansions of the random variables $O_t^\cont$ and $I_t^{(1),\cont}$ to show that their difference is a tight sequence of random variables (with respect to $t$), as in~\eqref{eq:persnoperscont}. That is, for a mapping $s\mapsto f(s)$ to be determined, we show that for any $\eps>0$ small and $A>0$ large, there exist $t',K,M>0$ such that for all $t\geq t'$, the events
	\begin{align}
		|O_s^\cont -f(s)|&<K \quad \text{for all }s\in[t-A,t+A],\label{eq:Os}
		\intertext{and}  
		I_s^\cont -O_s^\cont&<K \quad \text{for all }s\in [t-A,t+A],\label{eq:IsOs}
	\end{align}
	occur with probability at least $1-\eps$. We first collect several preliminary results in Section~\ref{sec:prelim} and prove~\eqref{eq:Os} in Section~\ref{sec:old}. We then prove several results on the offspring of `young' and `old' individuals in Section~\ref{sec:I}, to finally  prove~\eqref{eq:IsOs} in Section~\ref{sec:pers}. 
	
	\subsection{Preliminary results} \label{sec:prelim}
	
	We first state some preliminary results that are used throughout the remainder of this section. We recall the definition of the CTPAVD branching process in Section~\ref{sec:embed}, and that we write $D$ for the offspring of an individual, $S_k$ for the time an individual needs to give birth to $k$ children, and $L\coloneq S_{D+1}$ for the lifetime of an individual. Furthermore, we  recall the sequences $\varphi_1,\varphi_2,$ and $\rho_1$ from~\eqref{eq:seqs1} and define the sequence
	\be \label{eq:rho2}
	\rho_2(k)\coloneq \sum_{i=0}^{k-1}\Big(\frac{d(i)}{b(i)+d(i)}\Big)^2\qquad\text{for }k\in\N. 
	\ee 
	We extend the domain of $\rho_2$ to $\R_+$ by linear interpolation. We then have the following results.
	
	\begin{lemma}[Offspring tail bounds, Lemmas $5.3$ and $5.4$ in~\cite{HeyLod25}]\label{lemma:Dtail}
		Let $D$ be as in~\eqref{eq:D}. Then,
		\be 
		\P{D\geq k}\leq \e^{-\rho_1(k)-\frac12 \rho_2(k)}. 
		\ee 
		In particular, $\P{D=\infty}=0$ if and only if Assumption~\eqref{ass:A2} is satisfied. Furthermore, assume that $b$ and $d$ are such that Assumption~\eqref{ass:A2} is satisfied. When $\rho_2$ diverges and $d=o(b)$, 
		\be 
		\P{D\geq k}=\e^{-\rho_1(k)-(\frac12+o(1))\rho_2(k)}.
		\ee 
		If, instead, $\lim_{k\to\infty}\rho_2(k)$ exists, 
		\be 
		\P{D\geq k}=\e^{-\rho_1(k) -\cO(1)}.
		\ee 
	\end{lemma}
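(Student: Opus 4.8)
The plan is to reduce the entire lemma to an elementary estimate of an infinite product. Since $D=\inf\{i\in\N_0:B_i=0\}$ with the $B_i$ independent and $\P{B_i=1}=b(i)/(b(i)+d(i))$, we have
\be
\P{D\geq k}=\prod_{i=0}^{k-1}\frac{b(i)}{b(i)+d(i)}=\prod_{i=0}^{k-1}(1-p_i),\qquad p_i:=\frac{d(i)}{b(i)+d(i)}\in[0,1),
\ee
and $\rho_1(k)=\sum_{i=0}^{k-1}p_i$, $\rho_2(k)=\sum_{i=0}^{k-1}p_i^2$. Taking logarithms reduces everything to controlling $\sum_{i=0}^{k-1}\bigl(-\log(1-p_i)\bigr)$ via the power series $-\log(1-p)=p+\tfrac12p^2+\sum_{j\geq3}p^j/j$, all of whose terms are non-negative for $p\in[0,1)$.

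For the first bound I would use $-\log(1-p)\geq p+\tfrac12p^2$; summing gives $-\log\P{D\geq k}\geq\rho_1(k)+\tfrac12\rho_2(k)$, i.e.\ $\P{D\geq k}\leq\e^{-\rho_1(k)-\frac12\rho_2(k)}$. For the dichotomy, $\P{D=\infty}=\prod_{i=0}^\infty(1-p_i)$, and since $p_i<1$ for every $i$ (because $b(i)>0$), the standard criterion for infinite products gives $\prod_{i=0}^\infty(1-p_i)=0$ iff $\sum_{i=0}^\infty p_i=\infty$, which is exactly Assumption~\eqref{ass:A2}: one direction follows from $-\log(1-p)\geq p$, the other from $-\log(1-p)\leq p+p^2\leq 2p$ for $p\leq\tfrac12$ together with the fact that $\sum p_i<\infty$ forces $p_i\to0$.

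For the refined asymptotics, note that $\rho_2$ is monotone, so either $\lim_k\rho_2(k)$ exists finitely or $\rho_2$ diverges, and in both claimed cases $p_i\to0$: immediate when $\rho_2$ converges, and when $d=o(b)$ because $p_i=(d(i)/b(i))/(1+d(i)/b(i))\to0$ (this equivalence is precisely why the hypothesis $d=o(b)$ is needed). Writing $R_i:=\sum_{j\geq3}p_i^j/j$, for all $i$ beyond some index $i_0$ we have $p_i\leq\tfrac12$ and $0\leq R_i\leq\tfrac23p_i^3\leq\tfrac23p_i\cdot p_i^2$. When $\lim_k\rho_2(k)$ exists, $\sum_{i\geq i_0}R_i\leq\tfrac23\sum_{i\geq i_0}p_i^2<\infty$ and the finitely many terms $i<i_0$ contribute a constant, so $-\log\P{D\geq k}=\rho_1(k)+\sum_{i=0}^{k-1}\bigl(\tfrac12p_i^2+R_i\bigr)=\rho_1(k)+\cO(1)$. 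When $\rho_2$ diverges and $d=o(b)$, given $\delta>0$ choose $i_1\geq i_0$ with $\tfrac23p_i\leq\delta$ for $i\geq i_1$; then $\sum_{i=0}^{k-1}R_i\leq C+\delta\,\rho_2(k)$, and since $\rho_2(k)\to\infty$ this yields $\sum_{i=0}^{k-1}R_i=o(\rho_2(k))$, whence $-\log\P{D\geq k}=\rho_1(k)+\tfrac12\rho_2(k)+o(\rho_2(k))=\rho_1(k)+\bigl(\tfrac12+o(1)\bigr)\rho_2(k)$.

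There is no serious obstacle; the only points requiring care are the bookkeeping of the finitely many small indices where $p_i$ need not be small (these always contribute a harmless additive $\cO(1)$ since $p_i<1$ throughout and $p_i\to0$), and checking that the cubic-and-higher remainder $\sum_i R_i$ is genuinely $o(\rho_2(k))$ in the divergent regime, which is exactly where the equivalence $d=o(b)\iff p_i\to0$ is used.
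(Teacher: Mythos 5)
Your proof is correct and is the natural elementary argument: expand $-\log(1-p_i)$ in its power series, identify $\rho_1(k)$ and $\tfrac12\rho_2(k)$ as the first two partial sums, and control the cubic-and-higher remainder by $\tfrac23 p_i^3$ once $p_i\leq\tfrac12$, which holds for all but finitely many $i$ in each regime because $p_i\to0$ (via $\sum p_i^2<\infty$ when $\rho_2$ converges, and via $d=o(b)$ when $\rho_2$ diverges). The paper cites this lemma from~\cite{HeyLod25} without reproducing a proof, and your derivation — including the correct observation that $\rho_2\to\infty$ alone does not force $p_i\to0$, so the $d=o(b)$ hypothesis is genuinely needed in the divergent case — is exactly how one would establish it.
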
 
	
	The lemma follows from an asymptotic expansion of the tail distribution of $D$, as in~\eqref{eq:D}.

	\begin{lemma}[Probability to survive with many children, Lemma $5.9$ in~\cite{HeyLod25}]\label{lemma:survdeg}
		$\;$\\ $(i)$ Suppose that there exist $x\geq 0$ and $K=K(x)\in\N_0$ such that $d(i)\geq x$ for all $i\geq K$. Then, for all $k\geq K$ and all $t'\geq t\geq0$,
		\be 
		\P{D\geq k, S_k\leq t, S_{D+1}>t'}\leq \e^{-x(t'-t)}\P{D\geq k}\E{\ind_{\{S_k\leq t\}}\e^{x(S_k-t)}}.
		\ee 
		$(ii)$ Suppose that there exist $x\geq 0$ and $K=K(x)\in\N$ such that $d(i)\leq x$ for all $i\geq K$. Then, for all $k\geq K$ and all $t'\geq t\geq0$,  
		\be 
		\P{D\geq k, S_k\leq t, S_{D+1}>t'}\geq \e^{-x(t'-t)}\P{D\geq k}\E{\ind_{\{S_k\leq t\}}\e^{x(S_k-t)}}.
		\ee 
	\end{lemma}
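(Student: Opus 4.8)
The plan is to condition on the first $k$ inter-birth times, reduce the statement to a tail bound for the \emph{residual lifetime} of an individual that already has $k$ children, and then prove that bound via a killing-clock representation. Write $\beta_i:=b(i)+d(i)$. The event $\{D\ge k\}=\{B_0=\cdots=B_{k-1}=1\}$ depends only on $(B_i)_{i<k}$ and $S_k=\sum_{i<k}E_i$ only on $(E_i)_{i<k}$, whereas, on $\{D\ge k\}$, the increment $S_{D+1}-S_k=\sum_{i=k}^{D}E_i$ depends only on $(B_i)_{i\ge k}$ and $(E_i)_{i\ge k}$, is therefore independent of $S_k$, and has the law of the lifetime $\tilde L_k$ of a CTPAVD individual started already in state $k$. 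Hence, writing $g(r):=\P{\tilde L_k>r}$,
\be
\P{D\ge k,\ S_k\le t,\ S_{D+1}>t'}=\P{D\ge k}\,\E{\ind_{\{S_k\le t\}}\,g(t'-S_k)}.
\ee
On $\{S_k\le t\}$ one has $t'-S_k\ge t'-t\ge 0$ and $\e^{-x(t'-S_k)}=\e^{-x(t'-t)}\,\e^{x(S_k-t)}$, so both statements of the lemma follow once we prove $g(r)\le\e^{-xr}$ for all $r\ge0$ in case $(i)$, and $g(r)\ge\e^{-xr}$ for all $r\ge0$ in case $(ii)$.

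For the tail bound I would represent $\tilde L_k$ through a killing clock. Let $(\hat N_r)_{r\ge0}$ be the pure birth chain with $\hat N_0=k$ and jump rate $b(i)$ in state $i$, let $\mathcal E\sim\mathrm{Exp}(1)$ be independent, and set $\tilde L_k:=\inf\{r\ge0:\int_0^r d(\hat N_u)\,\dd u\ge\mathcal E\}$. A routine computation using the splitting property of exponentials and lack of memory shows that this makes the holding time in state $i$ equal to $\mathrm{Exp}(\beta_i)$ and the move-to-$(i+1)$ probability equal to $b(i)/\beta_i$, i.e.\ it reproduces the CTPAVD dynamics; moreover Assumption~\eqref{ass:A1}, in force throughout, together with $b(i)\le\beta_i$ gives $\sum_i 1/b(i)=\infty$, so $\hat N$ is non-explosive and $\int_0^r d(\hat N_u)\,\dd u<\infty$ for every $r$. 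Consequently
\be
g(r)=\P{\tilde L_k>r}=\P{\int_0^r d(\hat N_u)\,\dd u<\mathcal E}=\E{\exp\Big(-\int_0^r d(\hat N_u)\,\dd u\Big)}.
\ee
Since $\hat N_u\ge k\ge K$ for all $u$: in case $(i)$, $d(\hat N_u)\ge x$, hence $\int_0^r d(\hat N_u)\,\dd u\ge xr$ and $g(r)\le\e^{-xr}$; in case $(ii)$, $d(\hat N_u)\le x$, hence $\int_0^r d(\hat N_u)\,\dd u\le xr$ and $g(r)\ge\e^{-xr}$. Inserting these into the first display completes the proof of both parts.

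An equivalent route for the second step is a pathwise coupling with a single auxiliary exponential clock $\Gamma\sim\mathrm{Exp}(x)$: in case $(i)$ realise the death transition in state $i$ as the minimum of the residual of $\Gamma$ (itself $\mathrm{Exp}(x)$ by lack of memory) and an independent $\mathrm{Exp}(d(i)-x)$ clock, so the individual dies no later than $\Gamma$ rings; in case $(ii)$ add to the true dynamics an independent $\mathrm{Exp}(x-d(i))$ killing clock in state $i$, so the accelerated individual (which then has constant death rate $x$, hence lifetime $\mathrm{Exp}(x)$) dies no sooner than the true one. The independence bookkeeping in the first step and the deterministic bounds on $\int_0^r d(\hat N_u)\,\dd u$ are routine; the only point requiring genuine care is the identification of the law of $\tilde L_k$ with the killing representation — equivalently, that the clock manipulations reproduce the joint law of the offspring and inter-birth times of an individual — which rests on the memoryless property and on the independence of a competing exponential's holding time from its transition indicator. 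I expect this law identification to be the only delicate step.
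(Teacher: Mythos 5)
The paper does not provide a proof of this lemma; it cites Lemma~5.9 of~\cite{HeyLod25}, so there is no in-text argument to compare against. Your proof is correct and self-contained. The factorisation $\P{D\geq k,\,S_k\leq t,\,S_{D+1}>t'}=\P{D\geq k}\,\E{\ind_{\{S_k\leq t\}}g(t'-S_k)}$ is exact: setting $\tilde D:=\inf\{i\geq k:B_i=0\}$ and $\tilde L_k:=\sum_{i=k}^{\tilde D}E_i$, so that $S_{D+1}-S_k=\tilde L_k$ on $\{D\geq k\}$, the three quantities $\{D\geq k\}$, $S_k$, $\tilde L_k$ are measurable with respect to the mutually independent blocks $(B_i)_{i<k}$, $(E_i)_{i<k}$ and $\{(E_i)_{i\geq k},(B_i)_{i\geq k}\}$. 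The killing-clock identity $g(r)=\E{\e^{-\int_0^r d(\hat N_u)\,\dd u}}$ is also correct, and you single out the right delicate point: non-explosion of $\hat N$ (guaranteed, as you note, by Assumption~\eqref{ass:A1}, since $\sum_i 1/b(i)\geq\sum_i 1/(b(i)+d(i))=\infty$) is what makes the representation match the law of $\tilde L_k$, because otherwise the event $\{\tilde D=\infty,\ \sum_{i\geq k}E_i<\infty\}$ would not be correctly accounted for; observe that your alternate single-$\Gamma$ coupling needs the same non-explosion to identify the accelerated lifetime with $\mathrm{Exp}(x)$, so it saves nothing. With the integrand uniformly $\geq x$ in case (i) and $\leq x$ in case (ii), the two inequalities for $g$ are immediate, and $\e^{-x(t'-S_k)}=\e^{-x(t'-t)}\,\e^{x(S_k-t)}$ on $\{S_k\leq t\}$ concludes. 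Given that the right-hand side of the lemma already has $\P{D\geq k}$ factored out and displays an exponential tilting of $\ind_{\{S_k\leq t\}}$, this decomposition-then-residual-lifetime-bound route is essentially forced and very likely coincides with the argument in~\cite{HeyLod25}.
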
 
	
	Recall that $\cS$ denotes the event that the branching process survives, and that
	\be \label{eq:Ps} 
	\Ps{\cdot}\coloneq \P{\cdot\,|\,\cS} \quad\text{and}\quad \mathbb E_{\cS}[\cdot]\coloneq \E{\cdot\,|\,\cS}
	\ee 
	denote the conditional probability measure and its corresponding expected value, under the event $\cS$. For $0\leq s<t<\infty$, we further define 
	\be 
	\cB(s,t)\coloneq \{u\in \cU_\infty: \cB(u)\in [s,t]\}
	\ee 
	as the set of individuals born in the time interval $[s,t]$. Also, recall $\cA_t^\cont$ from~\eqref{eq:At} as the set of individuals alive at time $t$, and $\cR$ from~\eqref{eq:D} as the point process that governs the production of offspring of individuals. We have the following results regarding the growth-rate of  $\cA^\cont_t$ and $\cB(s,t)$.  The first is similar to the well-known Kesten-Stigum theorem and is a direct consequence of results for general CMJ branching processes in~\cite{Don72,Ner81}; the second is Corollary $6.6$ in~\cite{HeyLod25}.
	
	\begin{proposition}[Growth rate of branching process]\label{prop:growth}
		Suppose that $b$ and $d$ are such that Assumptions~\eqref{ass:A1} and \eqref{ass:C1} are satisfied. There exists a non-negative random variable $W$, such that 
		\be \label{eq:Aconv}
		\e^{-\lambda^*\!t} |\cA^\cont_t|\toas W. 
		\ee 
		Moreover, with
		\be 
		\widehat\cR^{\lambda^*}(t)\coloneq \int_0^t \e^{-\lambda^* u}\,\cR(\dd u), 
		\ee 
		the following are equivalent:
		\begin{enumerate}
			\item $\E{ \widehat\cR^{\lambda^*}(\infty)\log^+ \widehat\cR^{\lambda^*}(\infty)}<\infty$, 
			\item $\E{W}>0$,
			\item $\E{\e^{-\lambda^*\!t}|\cA^\cont_t|}\to \E{W}$ as $t\to\infty$, 
			\item $W>0$ almost surely on $\cS$. 
		\end{enumerate}
	\end{proposition}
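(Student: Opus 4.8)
The plan is to recognise the CTPAVD branching process of Section~\ref{sec:embed} as a supercritical Crump--Mode--Jagers (CMJ) branching process with reproduction point process $\cR=\sum_{j=1}^{D}\delta_{S_j}$, and to read off both assertions from the classical limit theory for such processes. Under Assumption~\eqref{ass:C1} the Malthusian parameter of $\cR$ is precisely $\lambda^*$, since $\widehat\mu(\lambda^*)=1$; the extra requirement $\lambda^*>\underline\lambda$ forces $\widehat\mu(\lambda)<\infty$ on a left-neighbourhood of $\lambda^*$, which is exactly the ``not too heavy'' regularity hypothesis needed by Nerman's theorem and which moreover makes $\widehat\mu$ differentiable at $\lambda^*$ with $\widehat\mu'(\lambda^*)\in(-\infty,0)$. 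Assumption~\eqref{ass:A1} guarantees non-explosion, and since each $S_k$ possesses a density the tilted measure $\e^{-\lambda^*s}\mu(s)\,\dd s$ is spread out, so the non-lattice hypothesis is automatic.

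\textbf{Almost sure convergence.} In the notation of Section~\ref{sec:embed}, $|\cA^\cont_t|=Z_{\mathrm a}(t)$ is $\bp$ counted with the bounded, right-continuous characteristic $\chi_{\mathrm a}(t)=\ind_{\{0\le t<L\}}$, whose tilted mean $s\mapsto\e^{-\lambda^*s}\E{\chi_{\mathrm a}(s)}=\e^{-\lambda^*s}\P{L>s}$ is continuous and dominated by the directly Riemann integrable function $\e^{-\lambda^*s}$, hence is itself directly Riemann integrable. Nerman's strong law~\cite{Ner81} then applies and yields
\be
\e^{-\lambda^*t}\,|\cA^\cont_t|\toas c_{\mathrm a}\,W_\infty,\qquad c_{\mathrm a}:=\frac{\int_0^\infty\e^{-\lambda^*s}\P{L>s}\,\dd s}{-\widehat\mu'(\lambda^*)}\in(0,\infty),
\ee
where $W_\infty$ is the almost-sure limit of Nerman's intrinsic martingale; relabelling $c_{\mathrm a}W_\infty$ as $W$ gives~\eqref{eq:Aconv}.

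\textbf{The equivalences.} These are the CMJ analogue of the Kesten--Stigum theorem. The discounted total offspring $\widehat\cR^{\lambda^*}(\infty)=\sum_{j=1}^{D}\e^{-\lambda^*S_j}$ has mean $\widehat\mu(\lambda^*)=1$ and drives the intrinsic martingale, so by Doney~\cite{Don72} and Nerman~\cite{Ner81} one has $\E{W_\infty}=1$ when $\E{\widehat\cR^{\lambda^*}(\infty)\log^+\widehat\cR^{\lambda^*}(\infty)}<\infty$ and $W_\infty=0$ almost surely otherwise, which is $(1)\Leftrightarrow(2)$. For $(1)\Leftrightarrow(3)$ I would note that the key renewal theorem gives $\e^{-\lambda^*t}\E{|\cA^\cont_t|}\to c_{\mathrm a}$ unconditionally, so that $(3)$ — which after the relabelling reads $\e^{-\lambda^*t}\E{|\cA^\cont_t|}\to c_{\mathrm a}\E{W_\infty}$ — holds exactly when $\E{W_\infty}=1$, i.e.\ under $(1)$. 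For $(2)\Leftrightarrow(4)$, the branching decomposition $W_\infty=\sum_{j=1}^{D}\e^{-\lambda^*S_j}W_\infty^{(j)}$ with $(W_\infty^{(j)})_j$ i.i.d.\ copies of $W_\infty$ independent of $(D,(S_j)_j)$ shows that $q_0:=\P{W_\infty=0}$ satisfies $q_0=\E{q_0^{D}}$, hence equals either the extinction probability $\P{\cS^c}$ or $1$; since $(2)$ rules out $q_0=1$ and $W_\infty=0$ on $\cS^c$, we obtain $W_\infty>0$ $\mathbb P_\cS$-almost surely, and the reverse implication is immediate. All of this is carried out in~\cite[Corollary~6.6]{HeyLod25}, so the cleanest write-up merely checks that Assumptions~\eqref{ass:A1} and~\eqref{ass:C1} place us in its hypotheses and then invokes it.

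\textbf{Main obstacle.} There is no conceptual obstacle here: the content sits in the cited CMJ theory. The work is technical bookkeeping — verifying Nerman's hypotheses (direct Riemann integrability of the tilted characteristic, finiteness and strict negativity of $\widehat\mu'(\lambda^*)$, the spread-out property of $\e^{-\lambda^*s}\mu(s)\,\dd s$) and, most delicately, carrying the conditioning on $\cS$ through every step, since $W$ is genuinely degenerate under $\P$ outside the $x\log x$ regime yet must be shown to be strictly positive under $\mathbb P_\cS$ exactly on~$(1)$.
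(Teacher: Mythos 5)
Your proposal is correct and takes essentially the same route as the paper, which simply states that Proposition~\ref{prop:growth} ``is a direct result of~\cite[Theorem $5.4$]{Ner81} and~\cite{Don72}''; you have merely unpacked the verification of Nerman's hypotheses (direct Riemann integrability of the tilted characteristic $\e^{-\lambda^*s}\P{L>s}$, non-lattice reproduction measure, $-\widehat\mu'(\lambda^*)\in(0,\infty)$ guaranteed by $\lambda^*>\underline\lambda$) and the standard Kesten--Stigum dichotomy that the paper leaves implicit.
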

	
	Proposition~\ref{prop:growth} is a direct result of~\cite[Theorem $5.4$]{Ner81} and~\cite{Don72} (see also~\cite[Proposition $1.1$]{Ner81} for a condensed version).
	
	\begin{corollary}[Corollary $6.6$ in \cite{HeyLod25}]\label{cor:growth}
		Suppose that $b$ and $d$ are such that Assumptions~\eqref{ass:A1} and \eqref{ass:C1} are satisfied, and let $\cS\coloneq \{\lim_{t\to\infty}|\bp(t)|=\infty\}$ be the event that the process $\mathrm{BP}$ survives. Then,  
		\begin{align}  
			\lim_{M\to\infty} \P{\sup_{t\geq 0} |\cB(0,t)|\e^{-\lambda^*\!t}\geq M}&=0,\label{eq:supB} 
			\intertext{and} 
			\lim_{M\to\infty} \Ps{\inf_{t\geq 0} |\cB(0,t)|\e^{-\lambda^*\!t}\leq 1/M}&=0.\label{eq:infB}
		\end{align}
	\end{corollary}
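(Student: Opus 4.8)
The plan is to reduce both displays to a single almost-sure statement --- that $|\cB(0,t)|\e^{-\lambda^*t}$ converges a.s.\ to a finite random variable $W$ which is $\mathbb P_\cS$-a.s.\ strictly positive --- and then to read off tightness of the running supremum and positivity of the running infimum by an elementary monotonicity argument. To get the convergence I would sandwich $|\cB(0,t)|$ between the number of alive individuals and $N(t)$. For every $t\ge 0$, an alive individual has in particular been born, so $\cA^\cont_t\subseteq\cB(0,t)=\bp(t)$ and hence $|\cA^\cont_t|\le|\cB(0,t)|$; and since $N(t)=|\cB(0,t)|+(|\cB(0,t)|-|\cA^\cont_t|)$ by~\eqref{eq:Nt}, that same inequality gives $|\cB(0,t)|\le N(t)$. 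Under Assumptions~\eqref{ass:A1} and~\eqref{ass:C1}, Proposition~\ref{prop:growth} gives $\e^{-\lambda^*t}|\cA^\cont_t|\toas W$ and~\eqref{eq:Nconv} gives $\e^{-\lambda^*t}N(t)\toas W$ with the same non-negative, a.s.\ finite $W$ satisfying $\P{W>0\,|\,\cS}=1$, so squeezing yields
\be
|\cB(0,t)|\e^{-\lambda^*t}\toas W.
\ee
Note also that, since $\widehat\mu(\lambda^*)=1<\infty$ under~\eqref{ass:C1}, Lemma~\ref{lemma:finbp} gives $|\cB(0,t)|=|\bp(t)|<\infty$ for every $t\ge0$ a.s., and $t\mapsto|\cB(0,t)|$ is non-decreasing.

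For~\eqref{eq:supB}, I would show $\Xi:=\sup_{t\ge0}|\cB(0,t)|\e^{-\lambda^*t}$ is a.s.\ finite, which at once gives $\lim_{M\to\infty}\P{\Xi\ge M}=0$. On the a.s.\ event where $|\cB(0,t)|\e^{-\lambda^*t}\to W<\infty$, pick $T_0$ with $|\cB(0,t)|\e^{-\lambda^*t}\le W+1$ for all $t\ge T_0$; on $[0,T_0]$, monotonicity of $|\cB(0,\cdot)|$ and $\e^{-\lambda^*t}\le1$ give $|\cB(0,t)|\e^{-\lambda^*t}\le|\cB(0,T_0)|<\infty$, so $\Xi\le|\cB(0,T_0)|\vee(W+1)<\infty$. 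No conditioning on $\cS$ enters, because on extinction $|\cB(0,t)|$ eventually stabilises while $\e^{-\lambda^*t}\to0$. For~\eqref{eq:infB}, I would work on the a.s.\ event $\cS\cap\{|\cB(0,t)|\e^{-\lambda^*t}\to W\}$, on which $W>0$: pick $T_0$ with $|\cB(0,t)|\e^{-\lambda^*t}>W/2$ for $t\ge T_0$, while on $[0,T_0]$ we have $|\cB(0,t)|\ge1$ (the root lies in $\cB(0,t)$ for every $t$), hence $|\cB(0,t)|\e^{-\lambda^*t}\ge\e^{-\lambda^*T_0}$. Therefore $\iota:=\inf_{t\ge0}|\cB(0,t)|\e^{-\lambda^*t}\ge\min\{\e^{-\lambda^*T_0},\,W/2\}>0$ $\mathbb P_\cS$-a.s. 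Since $\{\iota\le1/M\}$ decreases to $\{\iota=0\}$ as $M\to\infty$ and $\Ps{\iota=0}=0$, continuity of measure gives $\lim_{M\to\infty}\Ps{\iota\le1/M}=0$.

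I do not expect a genuine obstacle here: once the a.s.\ convergence $|\cB(0,t)|\e^{-\lambda^*t}\to W$ is in place, both conclusions follow by a few lines of monotonicity and continuity of measure. The only subtlety worth flagging is the asymmetric role of the survival event $\cS$ --- the supremum bound is unconditional, because $|\cB(0,t)|$ never outgrows $\e^{\lambda^*t}$ by more than an a.s.\ finite factor (even on extinction), whereas the infimum bound genuinely requires $W>0$, which holds only $\mathbb P_\cS$-a.s.
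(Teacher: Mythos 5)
Your strategy is right, but the squeezing step contains an overclaim. The random variable $W$ appearing in Proposition~\ref{prop:growth} (the a.s.\ limit of $\e^{-\lambda^*t}|\cA^\cont_t|$) and the $W$ appearing in~\eqref{eq:Nconv} (the a.s.\ limit of $\e^{-\lambda^*t}N(t)$) are denoted by the same letter but are generically \emph{different} random variables: in Nerman's theory each is a positive constant multiple of the same fundamental martingale limit, with constants $\widehat{\mathbb E[\chi_{\mathrm a}]}(\lambda^*)/\beta$ and $\widehat{\mathbb E[\chi]}(\lambda^*)/\beta$ for the respective characteristics, and these constants differ. Consequently the sandwich $|\cA^\cont_t|\leq|\cB(0,t)|\leq N(t)$ only yields
\be
W_1\leq \liminf_{t\to\infty}|\cB(0,t)|\e^{-\lambda^*t}\leq \limsup_{t\to\infty}|\cB(0,t)|\e^{-\lambda^*t}\leq W_2
\ee
with $W_1$ possibly strictly smaller than $W_2$, which does not give convergence.

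This is easily repaired and does not endanger your conclusion. Either observe that~\eqref{eq:Nt} gives the exact identity $|\cB(0,t)|=\tfrac12\big(N(t)+|\cA^\cont_t|\big)$, so that $|\cB(0,t)|\e^{-\lambda^*t}\toas\tfrac12(W_1+W_2)$, which is a.s.\ finite and $\mathbb P_\cS$-a.s.\ strictly positive (both $W_1$ and $W_2$ are non-negative, and at least one is positive $\mathbb P_\cS$-a.s.); or simply drop the convergence claim, since your argument for~\eqref{eq:supB} only uses $\limsup_t|\cB(0,t)|\e^{-\lambda^*t}<\infty$ a.s.\ (supplied by the upper bound $N(t)$) and your argument for~\eqref{eq:infB} only uses $\liminf_t|\cB(0,t)|\e^{-\lambda^*t}>0$ $\mathbb P_\cS$-a.s.\ (supplied by the lower bound $|\cA^\cont_t|$). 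Apart from this, the rest is sound: the appeal to Lemma~\ref{lemma:finbp} for $|\cB(0,t)|<\infty$ on compacts, the observation that $\varnothing\in\cB(0,t)$ for all $t\geq 0$, the asymmetric role of $\cS$ between the sup and inf bounds, and the concluding continuity-of-measure step are all correct.
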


	We conclude this sub-section with a few elementary results regarding the functions $\varphi_1, \varphi_2$, and $\rho_1, \rho_2$, as well as $\alpha$ and $\cK_\alpha$, as introduced in Section~\ref{sec:results}. 
	
	\begin{lemma}[Lemma $5.13$, \cite{HeyLod25}]\label{lemma:func} 
		$\,$ 
		\begin{enumerate}[label=(\alph*)]
			\item Suppose that Assumption~\eqref{ass:A2} is satisfied and $d=o(b)$. Then, $\rho_2=o(\rho_1)$. 
			\item Suppose that Assumption~\eqref{ass:A1} is satisfied and that $\overline d\coloneq \limsup_{i\to\infty}d(i)<\infty$. Then, $\limsup_{k\to\infty} \rho_1(k)/\varphi_1(k)\leq \overline d$. In particular, when $d$ converges to zero, then $\rho_1=o(\varphi_1)$. Similarly, if $\underline d\coloneq \limsup_{i\to\infty}d(i)>0$, then $\liminf_{k\to\infty}\rho_1(k)/\varphi_1(k)\geq \underline d$. In particular, if $d$ tends to infinity, then $\varphi_1=o(\rho_1)$.
			\item Suppose that Assumption~\eqref{ass:A1} is satisfied and $\lim_{i\to\infty}d(i)=d^*\in[0,\infty)$. Then, we have $\cK_\alpha(t)=o(t)$. Moreover, for any function $s\colon (0,\infty)\to\R_+$ such that $s(t)=o(t)$, we have $\cK_\alpha(t)-\cK_\alpha(t-s(t))=o(s(t))$.
		\end{enumerate}
	\end{lemma}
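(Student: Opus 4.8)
The plan is to reduce all three parts to a single weighted Ces\`aro (Toeplitz) estimate and then specialise. The basic fact to record first is that for nonnegative weights $(w_i)_{i\ge0}$ with $\sum_{i\ge0}w_i=\infty$ and a bounded sequence $(f_i)_{i\ge0}$ one has
$\liminf_{i}f_i\le\liminf_{k}\big(\sum_{i<k}f_iw_i\big)/\big(\sum_{i<k}w_i\big)\le\limsup_{k}\big(\sum_{i<k}f_iw_i\big)/\big(\sum_{i<k}w_i\big)\le\limsup_{i}f_i$;
this is proved by the routine split $\sum_{i<k}f_iw_i=\sum_{i<N}f_iw_i+\sum_{N\le i<k}f_iw_i$, bounding the second sum by $(\limsup_i f_i+\eps)\sum_{N\le i<k}w_i$, dividing by $\sum_{i<k}w_i\to\infty$, and letting $\eps\downarrow0$. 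I will also use the obvious integral version, in which $\sum_{i<k}(\cdot)$ is replaced by $\int_0^t(\cdot)$; this is the form actually needed, because $\varphi_1,\varphi_2,\rho_1,\rho_2,\alpha$ are the linear interpolations of those partial sums, so that $\varphi_1(t)=\int_0^t\frac{\dd x}{b(\lfloor x\rfloor)+d(\lfloor x\rfloor)}$, $\alpha(t)=\int_0^t\frac{d(\lfloor x\rfloor)-d^*}{b(\lfloor x\rfloor)+d(\lfloor x\rfloor)}\,\dd x$, and so on.

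For (a) I would set $a_i:=d(i)/(b(i)+d(i))$, so that $\rho_1(k)=\sum_{i<k}a_i$, $\rho_2(k)=\sum_{i<k}a_i^2$, and $\sum_i a_i=\infty$ by~\eqref{ass:A2}; since $d=o(b)$ forces $a_i\to0$, applying the Ces\`aro estimate with $w_i=a_i$ and $f_i=a_i$ gives $\rho_2(k)/\rho_1(k)\to0$, i.e.\ $\rho_2=o(\rho_1)$. For (b) I would take $w_i=1/(b(i)+d(i))$ (so $\sum_i w_i=\infty$ by~\eqref{ass:A1}) and $f_i=d(i)$, bounded since $\overline d<\infty$; then $\rho_1(k)=\sum_{i<k}f_iw_i$ and $\varphi_1(k)=\sum_{i<k}w_i$, so the estimate yields $\underline d\le\liminf_k\rho_1(k)/\varphi_1(k)\le\limsup_k\rho_1(k)/\varphi_1(k)\le\overline d$. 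The two ``in particular'' statements follow: if $d\to0$ then $\overline d=0$, hence $\rho_1=o(\varphi_1)$; if $d\to\infty$ then for each $M$ there is $N$ with $d(i)\ge M$ for $i\ge N$, whence $\rho_1(k)\ge M(\varphi_1(k)-\varphi_1(N))$ and $\liminf_k\rho_1(k)/\varphi_1(k)\ge M$, so $\varphi_1=o(\rho_1)$.

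For (c), writing $t=\varphi_1(r)$ with $r=\varphi_1^{-1}(t)\to\infty$ so that $\cK_\alpha(t)=\alpha(r)$, I would apply the integral estimate with $w_i=1/(b(i)+d(i))$ and $f_i=|d(i)-d^*|\to0$ to bound $|\alpha(r)|\le\int_0^r\frac{|d(\lfloor x\rfloor)-d^*|}{b(\lfloor x\rfloor)+d(\lfloor x\rfloor)}\,\dd x=o(\varphi_1(r))=o(t)$, which is $\cK_\alpha(t)=o(t)$. For the local estimate, fix $\eps>0$, choose $N$ with $|d(i)-d^*|<\eps$ for $i\ge N$, and set $r_1:=\varphi_1^{-1}(t)$ and $r_0:=\varphi_1^{-1}(t-s(t))$; since $s(t)=o(t)$ we have $t-s(t)\to\infty$, hence $r_0\to\infty$, so $r_0\ge N$ once $t$ is large, and then, using $r_1\ge r_0$ and the integral representations of $\alpha$ and $\varphi_1$,
\begin{align*}
\bigl|\cK_\alpha(t)-\cK_\alpha(t-s(t))\bigr|
&=\Bigl|\int_{r_0}^{r_1}\frac{d(\lfloor x\rfloor)-d^*}{b(\lfloor x\rfloor)+d(\lfloor x\rfloor)}\,\dd x\Bigr|\\
&\le\eps\int_{r_0}^{r_1}\frac{\dd x}{b(\lfloor x\rfloor)+d(\lfloor x\rfloor)}=\eps\,(\varphi_1(r_1)-\varphi_1(r_0))=\eps\,s(t),
\end{align*}
and letting $\eps\downarrow0$ gives $\cK_\alpha(t)-\cK_\alpha(t-s(t))=o(s(t))$.

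I do not anticipate a real obstacle: the whole lemma is bookkeeping around one Stolz--Ces\`aro estimate. The two places that need a little care are (i) transferring the discrete estimate to the linearly interpolated functions $\varphi_1,\alpha$, which I handle via their explicit integral representations; and (ii) in the local estimate of (c), verifying that the inner endpoint $\varphi_1^{-1}(t-s(t))$ diverges, so that the uniform tail bound $|d(i)-d^*|<\eps$ is available on the entire integration window $[r_0,r_1]$ — this is precisely where the hypothesis $s(t)=o(t)$ (rather than merely $s(t)$ being comparable to $t$) is used.
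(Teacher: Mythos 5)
Your proof is correct. The paper itself does not include a proof of this lemma (it is cited from~\cite{HeyLod25}), so there is no in-text argument to compare against, but your reduction of all three parts to a single weighted Ces\`aro/Stolz estimate, applied to the integral representations $\varphi_1(t)=\int_0^t \frac{\dd x}{b(\lfloor x\rfloor)+d(\lfloor x\rfloor)}$ and $\alpha(t)=\int_0^t\frac{d(\lfloor x\rfloor)-d^*}{b(\lfloor x\rfloor)+d(\lfloor x\rfloor)}\,\dd x$, is the natural route and all steps hold. Two small points worth noting explicitly: (i) the statement's definition $\underline d:=\limsup_{i\to\infty}d(i)$ is evidently a typo for $\liminf$, and you have implicitly corrected it in invoking the Ces\`aro lower bound; (ii) for the ``$d\to\infty$'' conclusion the hypothesis $\overline d<\infty$ fails, so it must be read as a separate claim, and you correctly give it a direct argument rather than deducing it from the Ces\`aro bound. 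The only invocation of Assumption~\eqref{ass:A1} in part (c) is to ensure $\varphi_1(k)\to\infty$, hence that $\varphi_1^{-1}$ is defined on all of $\R_+$ and $\varphi_1^{-1}(t-s(t))\to\infty$; you use this exactly where it is needed.
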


	\subsection{The oldest alive individual}\label{sec:old}
	
	In this sub-section we study the quantity $O_t^\cont$; the birth-time of the oldest alive individual in $\bp(t)$ conditionally on survival. To this end, we first obtain precise asymptotic distributional properties of the lifetime $L=S_{D+1}$ of individuals in the branching process, to then prove an asymptotic expansion for $O_t^\cont$ as $t\to\infty$. 
	
	\subsubsection{Lifetime distribution} \label{sec:lifetime}
	
	We know from~\cite[Lemma $6.1$]{HeyLod25} that the lifetime $L$ of an individual is asymptotically distributed as an exponential random variable with rate $d^*$ when $d$ converges to $d^*\leq R$. That is $\P{L>t}=\exp(-(d^*+o(1))t)$ as $t$ tends to infinity. A more precise asymptotic expansion is provided in~\cite[Lemma $6.3$]{HeyLod25} when Assumption~\eqref{ass:varphi2} is \emph{not} satisfied. We complement this result by the following lemma, which provides  a (different) higher-order expansion when Assumption~\eqref{ass:varphi2} is satisfied.

	\begin{lemma}[Third-order asymptotic lifetime distribution]\label{lemma:lifetime2nd}
		Suppose that $b$ and $d$ are such that Assumptions~\eqref{ass:A1}, \eqref{ass:A2}, and~\eqref{ass:varphi2} are satisfied, and recall $R$ from~\eqref{eq:R}. Also suppose that $\lim_{i\to\infty}d(i)=d^*\in[0,R)$. Recall the function $\cK_\alpha$ from~\eqref{eq:Ks}, and suppose that Assumption~\hyperref[ass:Kalpha]{$\cK_\alpha$} is satisfied.  Then,
		\be 
		\sup_{t\geq 0}\big|\log(\P{L>t})+d^*t+\cK_\alpha(t)\big|<\infty.
		\ee 
	\end{lemma}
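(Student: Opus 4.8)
The plan is to establish matching bounds $\log\P{L>t}=-d^*t-\cK_\alpha(t)+\cO(1)$ for large $t$ and then observe that $t\mapsto\log\P{L>t}+d^*t+\cK_\alpha(t)$ is finite and continuous on every compact subset of $[0,\infty)$, so that the uniform bound follows. Throughout I would use that in the CTPAVD set-up $L\overset{\mathrm d}{=}S_{D+1}$ with $D$ independent of $(S_k)_k$ (indeed $D$ is a functional of the Bernoulli's $(B_i)$ and $S_k$ of the independent exponentials $(E_i)$), together with the decomposition
\[
\P{L>t}=\P{S_{D+1}>t}=\sum_{d\ge0}\P{D=d}\,\P{S_{d+1}>t}.
\]
The guiding heuristic: $S_k$ concentrates around $\varphi_1(k)=\E{S_k}$ with uniformly bounded variance $\varphi_2(k)\le\varphi_2(\infty)<\infty$ (Assumption~\eqref{ass:varphi2}), so $S_{D+1}>t$ essentially forces $D\gtrsim\varphi_1^{-1}(t)$; since $d$ is bounded, $\rho_2$ converges, so Lemma~\ref{lemma:Dtail} gives $\P{D\ge k}=\e^{-\rho_1(k)-\cO(1)}$; and as $\rho_1(k)=d^*\varphi_1(k)+\alpha(k)$ with $\alpha(k)=\cK_\alpha(\varphi_1(k))$ (using invertibility of $\varphi_1$), one has $\rho_1(\varphi_1^{-1}(t))=d^*t+\cK_\alpha(t)$, the claimed exponent. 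I will also use repeatedly that $\cK_\alpha'(s)=d(\lfloor\varphi_1^{-1}(s)\rfloor)-d^*\to0$, so $\cK_\alpha$ varies by $o(1)$ over $\cO(1)$-windows near infinity, and that $\cK_\alpha(w)=o(w)$ by Lemma~\ref{lemma:func} $(c)$.

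For the lower bound I would retain only the dominant contribution: $\P{L>t}\ge\P{D\ge k,\,S_k\le t,\,S_{D+1}>t}$, to which Lemma~\ref{lemma:survdeg}$(ii)$ applies with any fixed constant $x>d^*$ (so $d(i)\le x$ for all large $i$), giving $\P{L>t}\ge\P{D\ge k}\,\E{\ind_{\{S_k\le t\}}\e^{x(S_k-t)}}$. Taking $k=k_t:=\lceil\varphi_1^{-1}(t-c_0)\rceil$ for a suitable fixed $c_0$, Chebyshev's inequality (via $\varphi_2(\infty)<\infty$) forces $S_{k_t}$ into an $\cO(1)$-window below $t$ with probability at least $\tfrac12$, so the expectation is bounded below by a positive constant; combined with $\rho_1(k_t)\le d^*t+\cK_\alpha(t)+\cO(1)$ (from $|\varphi_1(k_t)-t|=\cO(1)$ and the $o(1)$-variation of $\cK_\alpha$) and Lemma~\ref{lemma:Dtail}, this yields $\P{L>t}\ge c\,\e^{-d^*t-\cK_\alpha(t)}$.

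The upper bound is the substantive part. Split the sum at $d^{(t)}:=\min\{d:\varphi_1(d+1)\ge t\}$. For $d\ge d^{(t)}$ bound $\P{S_{d+1}>t}\le1$, so that portion is at most $\P{D\ge d^{(t)}}\le\e^{-\rho_1(d^{(t)})}\le C\,\e^{-d^*t-\cK_\alpha(t)}$, using $\varphi_1(d^{(t)})\in[t-1/R,t)$. For $d<d^{(t)}$ the key is to write $\P{D=d}=\tfrac{d(d)}{b(d)+d(d)}\P{D\ge d}\le\tfrac{\bar d}{b(d)+d(d)}\e^{-\rho_1(d)}$ with $\bar d:=\sup_id(i)<\infty$: the extra weight $\tfrac1{b(d)+d(d)}=\varphi_1(d+1)-\varphi_1(d)$ turns the sum into a Riemann sum (essential, since $b+d$ may grow). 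Crucially using $d^*<R$, fix $\theta\in(d^*,R)$ and bound $\P{S_{d+1}>t}\le C_\theta\,\e^{-\theta(t-\varphi_1(d+1))}$ via the uniform exponential moment bound $\E{\e^{\theta(S_k-\varphi_1(k))}}\le C_\theta$ (valid because $\varphi_2(\infty)<\infty$ and $\theta<R$). Substituting $\rho_1(d)=d^*\varphi_1(d)+\cK_\alpha(\varphi_1(d))$, each term is at most a constant times $\e^{-\theta t}\bigl(\varphi_1(d+1)-\varphi_1(d)\bigr)\e^{(\theta-d^*)\varphi_1(d)-\cK_\alpha(\varphi_1(d))}$; since $w\mapsto(\theta-d^*)w-\cK_\alpha(w)$ has derivative $\theta-d(\lfloor\varphi_1^{-1}(w)\rfloor)\to\theta-d^*>0$, it is eventually increasing with log-derivative bounded below, so the Riemann sum is controlled by $\int_0^t\e^{(\theta-d^*)w-\cK_\alpha(w)}\dd w\le C'\e^{(\theta-d^*)t-\cK_\alpha(t)}$. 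Hence this portion is $\le C''\e^{-d^*t-\cK_\alpha(t)}$, and altogether $\P{L>t}\le C\,\e^{-d^*t-\cK_\alpha(t)}$.

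The main obstacle is precisely this $d<d^{(t)}$ sum, and it is where $d^*<R$ is indispensable: replacing $\P{D=d}$ by $\P{D\ge d}$, or using a crude $\P{S_{d+1}\le t}$-type estimate, produces a sum that can blow up by a factor as large as $\e^{t}$ when $b$ grows; only the $\tfrac1{b(d)+d(d)}$ weight (making it a Riemann sum) together with an exponential tail rate $\theta$ strictly larger than $d^*$ (available exactly because $d^*<R$) keeps it at the right order $\e^{-d^*t-\cK_\alpha(t)}$.
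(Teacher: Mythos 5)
Your proof is correct, but the upper bound takes a genuinely different route from the paper's, so it is worth comparing the two. The paper discretises by \emph{time}: it sets $k_i=\lceil\varphi_1^{-1}(t-i)\rceil$, bounds $\P{L>t}\le\P{D\ge k_0}+\P{S_{k_M}>t}+\sum_{i}\P{D\ge k_{i+1}}\P{S_{k_i}>t}$, applies a Chernoff bound to each $\P{S_{k_i}>t}$, and then controls the resulting sum $\sum_i\e^{-\cK_\alpha(t-(i+1))-(\theta-d^*)i}$ by a two-case analysis depending on whether $\cK_{\wt\alpha}$ converges or diverges. You instead discretise by \emph{offspring value}: you write $\P{L>t}=\sum_d\P{D=d}\P{S_{d+1}>t}$ and exploit the exact identity $\P{D=d}=\tfrac{d(d)}{b(d)+d(d)}\P{D\ge d}$, noting that the extra factor $\tfrac{1}{b(d)+d(d)}=\varphi_1(d+1)-\varphi_1(d)$ is exactly the mesh of the partition $(\varphi_1(d))_d$. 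Together with a uniform Chernoff moment bound $\E{\e^{\theta(S_k-\varphi_1(k))}}\le C_\theta$ (which holds precisely because $\varphi_2(\infty)<\infty$ and $\theta<R$, so $1-\theta/(b(i)+d(i))\ge1-\theta/R>0$ uniformly), this turns the sum into a Riemann sum for $\int_0^t\e^{(\theta-d^*)w-\cK_\alpha(w)}\,\dd w$. Since $\cK_\alpha'(w)=d(\lfloor\varphi_1^{-1}(w)\rfloor)-d^*\to0$, the integrand's log-derivative $\theta-d(\lfloor\varphi_1^{-1}(w)\rfloor)$ is eventually bounded below by $(\theta-d^*)/2>0$ and bounded above, which simultaneously controls the mesh error and forces $\int_0^t\le C'\e^{(\theta-d^*)t-\cK_\alpha(t)}$. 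This yields the $\e^{-d^*t-\cK_\alpha(t)}$ rate in one stroke, with no case split; the price is the slightly heavier moment machinery. Both arguments need $d^*<\theta<R$ for exactly the same reason. Your lower bound is also a minor variant of the paper's (which simply takes $k=\lfloor\varphi_1^{-1}(t)\rfloor$ and $\P{S_k>t}\ge\P{S_k-\varphi_1(k)>0}\to\P{M_\infty>0}$, rather than invoking Lemma~\ref{lemma:survdeg} and Chebyshev), but both work. One small presentational point: you correctly observe that Assumption~\hyperref[ass:Kalpha]{$\cK_\alpha$} is not actually invoked in this argument — only Lemma~\ref{lemma:func}$(c)$, which needs Assumption~\eqref{ass:A1} and $d(i)\to d^*$; the paper's own proof has the same feature.
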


	\begin{proof}
		We start by making the following observations. Since $\varphi_2$ converges and $d$ converges, it follows that $\rho_2$, as in~\eqref{eq:rho2}, converges. Furthermore, as $\varphi_2(k)=\Var(S_k)=\Var(S_k-\varphi_1(k))$ converges and $\varphi_1(k)=\E{S_k}$, it follows that $S_k-\varphi_1(k)$ converges almost surely to some random variable $M_\infty$ by Kolmogorov's two series theorem~\cite[Theorem 2.5.6]{Durr19}.
		
		\textbf{Lower bound.} Set $k\coloneq \floor{\varphi_1^{-1}(t)}$. Then, using Lemma~\ref{lemma:Dtail} and the above observations,
		\be\ba 
		\P{L>t}&\geq \P{D\geq k}\P{S_k>t}\\ 
		&=\exp(-\rho_1(k)+\cO(1))\P{S_k>t}\\ 
		&\geq \exp(-\rho_1(\varphi_1^{-1}(t))+\cO(1))\P{S_k-\varphi_1(k)>0}\\ 
		&=\exp(-d^*t-\cK_\alpha(t)+\cO(1))(\P{M_\infty>0}+o(1)),
		\ea \ee
		which yields the desired lower bound.
		
		\textbf{Upper bound.} We set
		\be 
		M=M(t)\coloneq \ceil{(1-\eps)t} \qquad\text{and}\qquad k_i\coloneq \ceil{\varphi_1^{-1}(t-i)}\quad \text{for }i\in \{0,\ldots, M\}.
		\ee 
		Further, we fix $\theta\in(d^*,R)$ and take $\eps\in(0,1)$ small enough such that $\theta(1-\eps)>d^*$. Then, using Lemma~\ref{lemma:Dtail} and Chernoff bounds, 
		\be \ba 
		\P{L>t}&\leq \P{D\geq k_0}+\P{S_{k_M}>t}+\sum_{i=0}^{M-1}\P{D\geq k_{i+1}}\P{S_{k_i}>t}\\ 
		&\leq  \e^{-\rho_1(k_0)}+\e^{-\theta t}\prod_{i=0}^{k_M-1}\frac{b(i)+d(i)}{b(i)+d(i)-\theta}+\e^{-\theta t}\sum_{i=0}^{M-1}\e^{-\rho_1(k_{i+1})}\prod_{i=0}^{k_i-1}\frac{b(i)+d(i)}{b(i)+d(i)-\theta}.
		\ea \ee 
		Since $\varphi_2$ converges, it follows that for any $i\leq M$,
		\be 
		\prod_{i=0}^{k_i-1}\frac{b(i)+d(i)}{b(i)+d(i)-\theta}=\exp(\theta \varphi_1(k_i)+\cO(1))= \exp(\theta t-\theta i+\cO(1)), 
		\ee 
		where the $\cO(1)$ term is independent of $t$ and $i$. By the definition of $\alpha$ in~\eqref{eq:alpha}, we thus arrive at the upper bound 
		\be \label{eq:bigub}
		\e^{-d^*t-\cK_\alpha(t)}+\e^{-\theta (1-\eps)t}+\sum_{i=0}^{M-1}\e^{-d^*t-\cK_\alpha(t-(i+1))-(\theta -d^*)i +\cO(1)}.
		\ee 
		As $\theta(1-\eps)>d^*$ by the choice of $\theta$ and $\eps$, and $\cK_\alpha=o(t)$ by Lemma~\ref{lemma:func}$(c)$, it follows that the second term is negligible compared to the first term. To deal with the sum, we define 
		\be 
		\wt\alpha(k)=\sum_{i=0}^{k-1}\frac{|d(i)-d^*|}{b(i)+d(i)}\qquad \text{for }k\in\N_0, \quad\text{and}\quad \cK_{\wt\alpha}(t)\coloneq \wt\alpha\big(\varphi_1^{-1}(t)\big) \qquad \text{for }t\geq0, 
		\ee 
		where we extend the domain of $\wt\alpha$ to $\R_+$ by linear interpolation. We then distinguish between two cases. 
		
		\textbf{$\boldsymbol{\cK_{\wt \alpha}}$ converges. } In this case, it follows that $\cK_\alpha$ is bounded, as $|\cK_\alpha(t)|\leq \cK_{\wt \alpha}(t)$ for all $t\geq 0$. Hence the term $\cK_\alpha(t-(i+1))$ in the exponential summands in~\eqref{eq:bigub} can be included in the $\cO(1)$ term  for each $i\leq M-1$, to yield
		\be 
		\sum_{i=0}^{M-1}\e^{-d^*t-\cK_\alpha(t-(i+1))-(\theta -d^*)i +\cO(1)}=\sum_{i=0}^{M-1}\e^{-d^*t-(\theta -d^*)i +\cO(1)}\leq C\e^{-d^*t}, 
		\ee 
		for some constant $C>0$. Combined with the first two terms in~\eqref{eq:bigub} this implies the desired result, since $\cK_\alpha$ is bounded. 
		
		\textbf{$\boldsymbol{\cK_{\wt \alpha}}$ tends to infinity. } In this case, we let $C'>2(\theta-d^*)^{-1}$ be a constant and split the sum in~\eqref{eq:bigub} into two parts, namely 
		\be \ba \label{eq:twosum}
		\sum_{i=0}^{M-1}\e^{-d^*t-\cK_\alpha(t-(i+1))-(\theta -d^*)i +\cO(1)}={}&\sum_{i=0}^{C' \cK_{\wt \alpha}(t)}\e^{-d^*t-\cK_\alpha(t-(i+1))-(\theta -d^*)i +\cO(1)}\\ 
		&+\sum_{i=C'\cK_{\wt\alpha}(t)}^{M-1}\e^{-d^*t-\cK_\alpha(t-(i+1))-(\theta -d^*)i +\cO(1)}.
		\ea \ee 
		We note that this is possible since $\cK_{\wt\alpha}(t)=o(t)$ (this follows in the same way as $\cK_\alpha(t)=o(t)$ in Lemma~\ref{lemma:func}$(c)$) and $M\geq (1-\eps)t$. Using the second part of Lemma~\ref{lemma:func}$(c)$  yields
		\be 
		\cK_\alpha(t-(i+1))-\cK_\alpha(t)=o(i) \quad \text{for any }i\leq C'\cK_{\wt\alpha}(t). 
		\ee 
		As a result, since $\theta>d^*$, the first sum on the right-hand side of~\eqref{eq:twosum} can be bounded from above for all $t$ large by 
		\be \label{eq:firstsumbound}
		\e^{-d^*t-\cK_\alpha(t)}\sum_{i=0}^{C'\cK_{\wt\alpha}(t)}\e^{-(\theta-d^*+o(1))i+\cO(1)}\leq C_2\e^{-d^*t-\cK_\alpha(t)}, 
		\ee 
		for some constant $C_2>0$. For the second sum on the right-hand side of~\eqref{eq:twosum}, we bound for each index $i$ in the sum,
		\be 
		-\cK_\alpha(t-(i+1))\leq \sup_{x\in [\eps t,t]}|\cK_\alpha(x)|\leq \sup_{x\in[\eps t,t]}\cK_{\wt\alpha}(x)=\cK_{\wt\alpha}(t), 
		\ee  
		where the final step uses that $\cK_{\wt\alpha}$ is increasing. As a result, we obtain 
		\be 
		\sum_{i=C'\cK_{\wt\alpha}(t)}^{M-1}\!\!\!\!\!\!\!\exp(-d^*t-\cK_\alpha(t-(i+1))-(\theta -d^*)i +\cO(1))\leq\exp(-d^*t+\cK_{\wt \alpha}(t)-(\theta -d^*)C'\cK_{\wt \alpha}(t) +\cO(1)).
		\ee 
		By the choice of $C'$, the upper bound is at most 
		\be 
		\exp(-d^*t-\cK_{\wt \alpha}(t)+\cO(1))\leq \exp(-d^*t-\cK_\alpha(t)+\cO(1)).
		\ee 
		Combined with the bound in~\eqref{eq:firstsumbound} for the first sum on the right-hand side of~\eqref{eq:twosum} and the first two terms in~\eqref{eq:bigub}, we thus arrive at the desired result and conclude the proof.
	\end{proof} 
	
	\subsubsection{Non-survival of old individuals}\label{sec:oldproof}
	
	We conclude this sub-section by proving asymptotic results for $O_t^\cont$. In essence, we show that all individuals that are `too old' have all died by time $t$ and that there exists `young enough' individuals that survive up to time $t$ with probability close to one. This is made precise in the following result.
	
	\begin{proposition}[The oldest alive individual]\label{prop:Otpers}
		Suppose that the sequences $b$ and $d$ are such that Assumptions~\eqref{ass:A1}, \eqref{ass:A2}, ~\eqref{ass:C1}, and~\eqref{ass:varphi2} are satisfied. Recall $R$ from~\eqref{eq:R} and recall that the event $\cS$ denotes that the branching process $\bp$ survives. Suppose that $\lim_{i\to\infty} d(i)=d^*\in[0,R)$ and that Assumption~\hyperref[ass:Kalpha]{$\cK_\alpha$} is satisfied. Then, for any $\eps,A>0$ there exist $K,t'>0$ such that for all $t>t'$, 
		\be 
		\Ps{\forall s\in [t-A,t+A]: \Big|O_s^\cont-\frac{d^*}{\lambda^*+d^*}t-\frac{1}{\lambda^*+d^*}\cK_\alpha(r(t))\Big|<K}>1-\eps. 
		\ee 
	\end{proposition}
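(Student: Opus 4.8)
Write $m(t):=\frac{d^*}{\lambda^*+d^*}t+\frac{1}{\lambda^*+d^*}\cK_\alpha(r(t))$, so the claim is that $O_s^\cont$ concentrates around $m(t)$ uniformly for $s\in[t-A,t+A]$. The plan is to establish two one-sided statements, both conditionally on $\cS$ and both with probability exceeding $1-\eps/2$ once $K$ and then $t'$ are taken large enough: (i) no individual born before time $m(t)-K$ is alive at time $t-A$; and (ii) at least one individual born before time $m(t)+K$ is alive at time $t+A$. These suffice. First, $O_s^\cont$ is almost surely non-decreasing in $s$ (births carry the current time, deaths only remove the current minimiser), so (i) gives $O_s^\cont\ge O_{t-A}^\cont>m(t)-K$ for every $s\ge t-A$. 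Second, the individual $v$ provided by (ii) has $\cB(v)\le m(t)+K<t-A$ for all large $t$ — because $d^*/(\lambda^*+d^*)<1$ (as $d^*<R\le b(0)+d(0)$ and $\lambda^*>0$) and $\cK_\alpha(r(t))=o(t)$ by Lemma~\ref{lemma:func}(c) — hence $v$ is alive on all of $[t-A,t+A]$ and $O_s^\cont\le m(t)+K$ there. Intersecting the two events and replacing $K$ by the larger of the two thresholds (which only helps both) yields the proposition. The two engines throughout are the sharp lifetime tail $\log\P{L>u}=-d^*u-\cK_\alpha(u)+\cO(1)$ of Lemma~\ref{lemma:lifetime2nd} and the growth control on $|\cB(0,\cdot)|$ from Corollary~\ref{cor:growth}.

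For (i), a first-moment bound is enough. Working on the high-probability event of \eqref{eq:supB}, on which $|\cB(0,u)|\le Me^{\lambda^*u}$ for all $u$, and using Lemma~\ref{lemma:lifetime2nd} for the survival probability,
\[
\E{\#\{v:\cB(v)\le m(t)-K,\ v\text{ alive at }t-A\}}\lesssim\int_0^{m(t)-K}e^{\lambda^*u}\,\P{L>t-A-u}\,\dd u\lesssim\int_0^{m(t)-K}\exp\!\bigl((\lambda^*+d^*)u-d^*t-\cK_\alpha(t-u)\bigr)\,\dd u.
\]
The exponent is increasing in $u$ with derivative $\lambda^*+d^*+\cK_\alpha'(t-u)\ge\lambda^*>0$, since $\cK_\alpha'(x)=d(\varphi_1^{-1}(x))-d^*\in[-d^*,\infty)$ and $\to0$; hence the integral is of the order of its value at $u=m(t)-K$. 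Evaluating there one uses the exact identity $(\lambda^*+d^*)m(t)-d^*t=\cK_\alpha(r(t))$, the relation $t-m(t)=r(t)+\cO(1)$ coming from the definition of $r$ (Remark~\ref{rem:Kalpha}), and the fact that $\cK_\alpha$ is Lipschitz with vanishing slope — so $|\cK_\alpha(r(t))-\cK_\alpha(r(t)+c)|\le|c|\sup_{x\ge r(t)/2}|d(\varphi_1^{-1}(x))-d^*|\to0$ for $|c|$ bounded (essentially Lemma~\ref{lemma:func}(c)) — to conclude that the value at $u=m(t)-K$ is $\exp(-(\lambda^*+d^*)K+\cO(1))$ with $\cO(1)$ uniform in $t$ once $t$ is large relative to $K$. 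Thus the expected count is $\le Ce^{-(\lambda^*+d^*)K}$, and Markov's inequality together with $\P{\cS}>0$ gives (i) after choosing $K$ large.

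For (ii) — the main obstacle — the same computation gives $\E N\asymp e^{(\lambda^*+d^*)K}\to\infty$ for $N:=\#\{v:\cB(v)\le m(t)+K,\ L^{(v)}>t+A-\cB(v)\}$, but a first moment no longer closes the argument: one must upgrade to $\Ps{N\ge1}\to1$. The key device is to condition on the skeleton $\cG:=\sigma\bigl((\cB(v),D^{(v)})_{v\in\cU_\infty}\bigr)$. Given $\cG$, the only undetermined part of $L^{(v)}=\cB(vD^{(v)})-\cB(v)+X\bigl(v(D^{(v)}+1)\bigr)$ is the last inter-birth time, an independent exponential; hence $N$ is a sum of $\cG$-conditionally independent Bernoulli variables, so $\Var(N\mid\cG)\le\E{N\mid\cG}$ and $\Ps{N=0\mid\cG}\le1/\E{N\mid\cG}$. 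It remains to show $\E{N\mid\cG}\to\infty$ in $\mathbb P_\cS$-probability: restricting attention to individuals born in a bounded window ending at $m(t)+K$, one uses the lower bound $|\cB(0,m(t))|\ge e^{\lambda^*m(t)}/M$ of \eqref{eq:infB} (which holds on $\cS$ with probability $>1-\eps$ for $M$ large) together with the branching property to get $\E{N\mid\cG}\gtrsim e^{\lambda^*m(t)}\,\P{L>t+A-m(t)-K+\cO(1)}$, and the identity $\lambda^*m(t)-d^*r(t)-\cK_\alpha(r(t))=\cO(1)$ shows the right-hand side is $\gtrsim e^{(\lambda^*+d^*)K}$. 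Choosing $K$ large then gives (ii).

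So the heart of the matter is entirely in (ii), and within it in two points: the conditional-independence-given-$\cG$ observation, which makes the second moment trivial, and the genuine (not merely in-expectation) lower bound $\E{N\mid\cG}\gtrsim e^{(\lambda^*+d^*)K}$, which needs the $e^{\lambda^*m(t)}$-growth of the population at time $m(t)$ (from \eqref{eq:infB} and $W>0$ on $\cS$) combined with the precise three-term lifetime tail of Lemma~\ref{lemma:lifetime2nd}. Everything else is bookkeeping around $\cK_\alpha$ — that evaluating $\cK_\alpha$ at $t-m(t)$ and nearby points returns $\cK_\alpha(r(t))+o(1)$, resting on the Lipschitz-with-vanishing-slope property and on Assumption~\hyperref[ass:Kalpha]{$\cK_\alpha$}/Remark~\ref{rem:Kalpha} fixing the relation between $r(t)$ and $t-m(t)$.
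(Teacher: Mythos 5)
The overall architecture — split $O_s^\cont$'s two-sided control into "no old individual survives" and "some young-enough individual does survive," invoke Lemma~\ref{lemma:lifetime2nd} for the tail of $L$, and Corollary~\ref{cor:growth} for the population count — matches the paper exactly, and your part (i) is a correct (marginally sharper) version of the paper's first-moment bound for $\Ps{O^\cont_{t-A}\le\ell_t}$. The trouble is in part (ii), which you correctly identify as the crux, and here your argument has a genuine gap.

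You condition on $\cG=\sigma((\cB(v),D^{(v)})_{v\in\cU_\infty})$, observe that given $\cG$ only the last inter-birth time $X(v(D^{(v)}+1))$ of each individual is undetermined, and invoke a conditional second-moment bound $\Ps{N=0\mid\cG}\le1/\E{N\mid\cG}$. That observation is clean, but your next step — "$\E{N\mid\cG}\gtrsim e^{\lambda^*m(t)}\,\P{L>t+A-m(t)-K+\cO(1)}$" — silently replaces the conditional expectation by the unconditional one, and that replacement is false. Given $\cG$, the conditional survival probability of an individual $v$ is
\[
\P{L^{(v)}>t+A-\cB(v)\mid\cG}=\exp\!\bigl(-(b(D^{(v)})+d(D^{(v)}))\,(t+A-\cB(vD^{(v)}))^{+}\bigr),
\]
which depends on $D^{(v)}$ and $\cB(vD^{(v)})$, both of which are $\cG$-measurable. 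A \emph{typical} $v$ born near $m(t)$ has $D^{(v)}$ of order $1$ and $\cB(vD^{(v)})-\cB(v)\approx\varphi_1(D^{(v)})=\cO(1)$, so the exponent is of order $-(b(D^{(v)})+d(D^{(v)}))(t-m(t))$, which is \emph{much} smaller than $\log\P{L>t+A-m(t)}\approx-d^*(t-m(t))-\cK_\alpha(\cdot)$ when $b(i)+d(i)$ is bounded away from $d^*$. The bulk of the unconditional $\P{L>t+A-m(t)}$ comes from the atypical skeletons where $D^{(v)}$ is large and $\cB(vD^{(v)})$ is close to $t$; those are rare under $\cG$, so without an additional concentration argument for the random variable $\E{N\mid\cG}$ (i.e.\ a genuine second-moment bound for $\sum_v p_v$ over $\cG$, not merely over the last exponentials) you cannot conclude it is $\gtrsim e^{(\lambda^*+d^*)K}$ with high $\mathbb P_\cS$-probability. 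Such a concentration statement is not obvious and is not in the paper.

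The paper sidesteps exactly this by conditioning on the coarser $\sigma$-algebra $\sigma(\cF_{\wt u_t},|\cB(\wt u_t,u_t)|)$ with $\wt u_t=u_t-K/2$: given $\cF_{\wt u_t}$ and the \emph{count} $m$ of individuals born in $(\wt u_t,u_t)$, the life variables $(D^{(v)},(X(vi))_i)$ of those $m$ individuals are unconstrained, so each survives to time $t+A$ with the \emph{deterministic} unconditional probability $p=\P{L\ge t+A-\wt u_t}$, and the bound $(1-p)^m\le e^{-mp}$ closes the argument after setting $m=M^{-1}e^{\lambda^*u_t}$ and invoking~\eqref{eq:infB}. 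If you want to preserve your conditional-independence idea, you would need to show that the sum of the random $\cG$-conditional survival probabilities concentrates around its mean — or more simply, condition only on the count as the paper does.
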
 
	
	\begin{proof}
		For $K>0$, we define 
		\be 
		\ell_t\coloneq \frac{d^*}{\lambda^*+d^*}t+\frac{1}{\lambda^*+d^*}\cK_\alpha(r(t))-K, \qquad u_t\coloneq \frac{d^*}{\lambda^*+d^*}t+\frac{1}{\lambda^*+d^*}\cK_\alpha(r(t))+K.
		\ee 
		Then, as $O_s^\cont$ is increasing in $s$, 
		\be \ba\label{eq:Otincr}
		\mathbb P_\cS{}&\bigg(\forall s\in[t-A,t+A]:\Big|O^\cont_s-\frac{d^*}{\lambda^*+d^*}t-\frac{1}{\lambda^*+d^*}\cK_\alpha(r(t))\Big|<K\bigg)\\ 
		&\geq 1-\Ps{O^\cont_{t-A}\leq \ell_t}-\Ps{O^\cont_{t+A}\geq u_t}. 
		\ea \ee 
		By conditioning on the number of individuals born up to time $\ell_t$ and using a union bound, we bound the first probability on the right-hand side from above by 
		\be \ba
		\Ps{O^\cont_{t-A}\leq \ell_t}&=\Ps{\exists v\in \cB(0,\ell_t): L^{(v)}>t-A-\cB(v)}\\ 
		&\leq \frac{M}{\P{\cS}}\e^{\lambda^* \ell_t}\P{L>t-A-\ell_t}+\Ps{\sup_{t\geq 0}|\cB(0,t)|\e^{-\lambda^* t}>M}.
		\ea\ee 
		The second probability on the right-hand side is at most $\eps/4$ when we choose $M=M(\eps)$ sufficiently large, as follows from~\eqref{eq:supB} in Corollary~\ref{cor:growth}. For the first probability, we use Lemma~\ref{lemma:lifetime2nd} and the definition of $\ell_t$ to further bound this from above by 
		\be \ba 
		\exp\big(\lambda^* {}&\ell_t-d^*(t-A-\ell_t)-\cK_\alpha(t-A-\ell_t)+C_1\big)\\ 
		= \exp\Big({}& \cK_\alpha(r(t))-\cK_\alpha\Big(\frac{\lambda^*}{\lambda^*+d^*}t-\frac{1}{\lambda^*+d^*}\cK_\alpha(r(t))+ K-A\Big)-(\lambda^*+d^*) K+d^*A+C_1\Big),
		\ea \ee 
		where $C_1>0$ is a sufficiently large constant. We use Lemma~\ref{lemma:func}$(c)$ to obtain that for any $K,A>0$ there exists $t'=t'(K,A)>0$ such that for all $t\geq t'$, 
		\be \label{eq:Kalphabound}
		\Big|\cK_\alpha\Big(\frac{\lambda^*}{\lambda^*+d^*}t-\frac{1}{\lambda^*+d^*}\cK_\alpha(r(t))+ K-A\Big)-\cK_\alpha\Big(\frac{\lambda^*}{\lambda^*+d^*}t-\frac{1}{\lambda^*+d^*}\cK_\alpha(r(t))\Big)\Big|<1. 
		\ee 
		Then,  Assumption~\hyperref[ass:Kalpha]{$\cK_\alpha$} yields that there exists a constant $C_2>0$ such that for all $t>0$, 
		\be \label{eq:Kalphabound2}
		\Big|\cK_\alpha\Big(\frac{\lambda^*}{\lambda^*+d^*}t-\frac{1}{\lambda^*+d^*}\cK_\alpha(r(t))\Big)-\cK_\alpha(r(t))\Big|\leq C_2.
		\ee 
		We thus arrive for all $t\geq t'$ at the upper bound
		\be 
		\Ps{O^\cont_{t-A}\leq \ell_t}\leq \exp\big(C_2+1-(\lambda^*+d^*) K+d^*A+C_1\big)+\eps/4\leq \eps/2,
		\ee  
		where we choose $K$ sufficiently large so that the exponential term is at most $\eps/4$ in the final step. We note that we can first choose such a $K$ and then take $t'=t'(K,A)$ sufficiently large such that~\eqref{eq:Kalphabound} holds.
		
		We then bound the second probability on the right-hand side of~\eqref{eq:Otincr}. In a similar way as for the first term, with $\wt u_t=u_t-K/2$, 
		\be \label{eq:Otbound}
		\Ps{O^\cont_{t+A}\geq u_t}\leq \Ps{\forall v\in \cB(\wt u_t,u_t):L^{(v)}\leq t+A-\wt u_t}.
		\ee 
		For ease of writing, we define the events 
		\be 
		\cE_t\coloneq \{\forall v\in \cB(\wt u_t,u_t): L^{(v)}\leq  t+A-\wt u_t\}, \qquad \cD_t\coloneq \{|\cB(\wt u_t,u_t)|>m\} \quad \text{for }m\in\N,t \geq0.
		\ee 
		We can thus bound 
		\be 
		\Ps{O^\cont_{t+A}\geq u_t}\leq \Ps{\cE_t}=\P{\cS}^{-1}\E{\ind_\cS\ind_{\cE_t}}\leq \P{\cS}^{-1}\E{\ind_\cS\ind_{\cE_t}\ind_{\cD_t}}+\P{\cS}^{-1}\E{\ind_\cS\ind_{\cD_t^c}}. 
		\ee 
		The second term on the right-hand side equals the probability of the event $\cD_t^c$ under the conditional probability measure $\mathbb P_\cS$. For the first term, we condition on two things. First, we condition $\cF_{\wt u_t}$, the $\sigma$-algebra generated by the branching process up to time $\wt u_t$. Second, we condition on $|\cB(\wt u_t, u_t)|$, the number of individuals born in the branching process in the interval $(\wt u_t,u_t)$. We can thus write
		\be \ba 
		\P{\cS}^{-1}\E{\ind_\cS\ind_{\cE_t}\ind_{\cD_t}}&=\P{\cS}^{-1}\E{\E{\ind_\cS\ind_{\cE_t}\ind_{\cD_t}\,|\, \cF_{\wt u_t}, |\cB(\wt u_t,u_t)|}}\\ 
		&\leq \P{\cS}^{-1}\E{\ind_{\cD_t}\E{\ind_{\cE_t}\,|\, \cF_{\wt u_t}, |\cB(\wt u_t,u_t)|}}\\ 
		&=\P{\cS}^{-1}\E{\ind_{\cD_t}\P{\cE_t\,|\, \cF_{\wt u_t}, |\cB(\wt u_t,u_t)|}}.
		\ea \ee 
		Conditionally on $|\cB(\wt u_t, u_t)|$ and upon the event $\cD_t$, we can use the independence of the lifetimes of distinct individuals to bound the terms in the expected value from above by 
		\be 
		\ind_{\cD_t}\P{\cE_t\,|\, \cF_{\wt u_t}, |\cB(\wt u_t,u_t)|}\leq \ind_{\cD_t}\big(1-\P{L\geq t+A-\wt u_t}\big)^m\leq \exp\big(-m\P{L\geq t+A-\wt u_t}\big),
		\ee 
		where we have omitted the indicator random variable and used that $1-x\leq \e^{-x}$ for $x\in\R$ in the last step. Combining all of the above  we thus arrive at 
		\be \label{eq:Otmsplit}
		\Ps{O_{t+A}^\cont\geq u_t}\leq \P{\cS}^{-1}\exp\big(-m\P{L\geq t+A-\wt u_t}\big)+\Ps{|\cB(\wt u_t,u_t)|\leq m}.
		\ee 
		We choose $m=M^{-1}\exp(\lambda^* u_t)$ for a large constant $M>0$ to be determined later to obtain the upper bound
		\be \label{eq:utbound2}
		\P{\cS}^{-1}\exp\big(-M^{-1}\P{L\geq t+A-\wt u_t}\e^{\lambda^*u_t }\big)+\Ps{|\cB(\wt u_t,u_t)|\leq M^{-1}\e^{\lambda^* u_t}}.
		\ee 
		We bound the first term from above and take care of the second term later. We use Lemma~\ref{lemma:lifetime2nd} and the definition of $u_t$ and $\wt u_t$ to arrive at the upper bound
		\be \ba 
		\P{\cS}^{-1}\exp\big(-M^{-1}\exp\big(-d^*(t{}&+A-\wt u_t)-\cK_\alpha(t+A-\wt u_t)-C_3+\lambda^* u_t\big)\big)\\ 
		=\mathbb P(\cS)^{-1}\exp\Big(-M^{-1}\exp\Big({}&\cK_\alpha(r(t))-\cK_\alpha\Big(\frac{\lambda^*}{\lambda^*+d^*}t-\frac{1}{\lambda^*+d^*}\cK_\alpha(r(t))-\frac K2\Big)\\ 
		&+(d^*/2+\lambda^*)K-d^*A-C_3 \Big)\Big),
		\ea \ee 
		where $C_3>0$ is a large constant. We again use Lemma~\ref{lemma:func}$(c)$ and Assumption~\hyperref[ass:Kalpha]{$\cK_\alpha$} to obtain similar bounds as in~\eqref{eq:Kalphabound} and~\eqref{eq:Kalphabound2}. We thus obtain, for some constants $C_4>0$ and  $t'=t'(K,A)>0$ that for all $t>t'$ we have have the upper bound
		\be 
		\mathbb P(\cS)^{-1}\exp\Big(-M^{-1}\exp\Big(C_4+(d^*/2+\lambda^*)K-d^*A-C_3 \Big)\Big).
		\ee 
		Given $A$, we bound $C_4-d^*A-C_3\geq -C_5$ for some large constant $C_5=C_5(A,d^*)$>0. Then, we choose $K$ sufficiently large so that the entire term is at most $\eps/4$. Again, we can first choose such a $K$ and then choose $t'$.
		
		It remains to bound the final term in~\eqref{eq:utbound2} from above by $\eps/4$ to obtain the desired result. To this end, we write
		\be\ba
		\Ps{|\cB(\wt u_t,u_t)|\leq M^{-1}\e^{\lambda^* u_t}}={}&\Ps{|\cB(0,u_t)|\e^{-\lambda^* u_t}-|\cB(0,\wt  u_t)|\e^{-\lambda^* \wt u_t-\lambda^* K/2}\leq M^{-1}}\\
		\leq {}&\Ps{|\cB(0,u_t)|\e^{-\lambda^* u_t}\leq \frac2M}\\
		&+\Ps{|\cB(0,\wt u_t)|\e^{-\lambda^*u_t}\geq \frac1M \e^{\lambda^*\frac K2}}.
		\ea\ee 
		We can  bound the first term on the right-hand side from above by 
		\be 
		\Ps{\inf_{t\geq 0}|\cB(0,t)|\e^{-\lambda^*t}\leq \frac2M}, 
		\ee 
		which can be made smaller than $\eps/6$ when taking $M$ sufficiently large by~\eqref{eq:infB} in Corollary~\ref{cor:growth}. Given such a fixed $M$, we then choose $K$ sufficiently large so that, by~\eqref{eq:supB} in Corollary~\ref{prop:growth}, 
		\be 
		\Ps{\sup_{t\geq 0}|\cB(0,t)|\e^{-\lambda^*t} \geq \frac1M\e^{\lambda^*K/2}}<\eps/12. 
		\ee 
		Combined, we thus obtain
		\be 
		\Ps{|\cB(\wt u_t,u_t)|\leq M^{-1}\e^{\lambda^* u_t}}<\eps/4,
		\ee 
		as desired, which concludes the proof.
	\end{proof}

	\subsection{The richest individual}\label{sec:I}
	
	In this sub-section we provide an asymptotic expansion of $I^\cont_t$, the birth-time of the richest individual in $\bp(t)$. That is, the individual that is alive at time $t$ and has produced the largest offspring by time $t$. To this end, we show that individuals that are `too young' produce `small' offspring, whereas there exist `old' individuals that produce large offspring. Combined, these then imply that $I_t^\cont$ is close to $O_t^\cont$.
	
	\subsubsection{Young individuals have small offspring and no individual has too large offspring} 
	
	We start by showing that individuals that are alive in an interval of time centred at $ t$ but born `too late', that is, born too long after the birth-time $O_t^\cont$ of the oldest alive individual (at time $t$), have too small offspring to attain the largest offspring among all alive individuals. Similarly, we show that there does not exist any individual that attains a `too large' offspring by time $\approx t$.
	
	Fix $C_1,C_1'>0$ and set 
	\be\ba  \label{eq:kut}
	k&\coloneq \ceil{\varphi_1^{-1}\big(\tfrac{\lambda^*}{\lambda^*+d^*}t-\tfrac{1}{\lambda^*+d^*}\cK_\alpha(r(t))-C_1\big)},\qquad u_t\coloneq \frac{d^*}{\lambda^*+d^*}t+\frac{1}{\lambda^*+d^*}\cK_\alpha(r(t)),\\ 
	k'&\coloneq \ceil{\varphi_1^{-1}\big(\tfrac{\lambda^*}{\lambda^*+d^*}t-\tfrac{1}{\lambda^*+d^*}\cK_\alpha(r(t))+C_1'\big)},
	\ea\ee  
	where we recall the function $r$ from Assumption~\hyperref[ass:Kalpha]{$\cK_\alpha$}. Then, we have the following result. 
	
	\begin{lemma}\label{lemma:oldsmalloff}
		Suppose $b$ and $d$ satisfy Assumptions~\eqref{ass:A1}, \eqref{ass:A2}, \eqref{ass:C1}, and \eqref{ass:varphi2}. Recall $R$ from~\eqref{eq:R} and suppose that $\lim_{i\to\infty}d(i)=d^*\in[0,R)$. Finally, suppose that Assumption~\hyperref[ass:Kalpha]{$\cK_\alpha$} is satisfied. For any $\eps, A,C_1>0$ there exist $C_2,t'>0$ such that for all $t>t'$,
		\be 
		\Ps{\forall s\in[t-A,t+A]\ \forall v\in \cB(u_t+C_2,t)\cap \cA_s^\cont: \deg_s(v)<k}\geq 1-\eps.
		\ee 
		Similarly, for any $\eps,A>0$ there exists $C_1',t'>0$ such that for all $t>t'$, 
		\be 
		\Ps{\forall s\in[t-A,t+A]\ \forall v\in \cA_s^\cont: \deg_s(v)<k'}\geq 1-\eps.
		\ee 
	\end{lemma}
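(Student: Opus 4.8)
The plan is to prove both statements by a first‑moment estimate: in each case bound the expected number of ``offending'' individuals and then apply Markov's inequality together with $\mathbb P_\cS(\cdot)=\P{\cdot}/\P{\cS}$ on $\cS$. First reduce the events to statements about a single individual's own reproduction. If $v$ is born at $\tau=\cB(v)$, is alive at some $s\in[t-A,t+A]$ and satisfies $d_s(v)\ge\kappa$ (with $\kappa=k$ for the first statement, $\kappa=k'$ for the second), then, since $d_s(v)$ is non‑decreasing in $s$ and $s\le t+A$, necessarily $D^{(v)}\ge\kappa$ and $S_\kappa^{(v)}\le (t+A)-\tau$; I will in fact discard the aliveness constraint and merely count individuals $v$ with $\cB(v)$ in the relevant range, $D^{(v)}\ge\kappa$ and $S^{(v)}_\kappa\le (t+A)-\cB(v)$ (so the lemma is proved in the stronger form that also allows dead individuals, cf.\ Remark~\ref{rem:incldead}). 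Because an individual's offspring $D^{(v)}$ and inter‑birth times $(X(vi))_{i}$ are independent of the ancestral data determining whether and when $v$ is born, the expected number of offending individuals equals $\sum_{v\in\cU_\infty}\E{\ind_{\{v\text{ born}\}}\ind_{\{\cB(v)\in\,\cdot\,\}}\,g_\kappa(\cB(v))}$, where $g_\kappa(\tau):=\P{D\ge\kappa,\ S_\kappa\le (t+A)-\tau}=\P{D\ge\kappa}\,\P{S_\kappa\le (t+A)-\tau}$ using independence of $D$ and $S_\kappa$. I then partition the range of $\tau$ into unit intervals, bound $g_\kappa$ on each by its value at the left endpoint (it is decreasing in $\tau$), and bound the number of individuals born by time $s$ via $\E{|\cB(0,s)|}\le C\e^{\lambda^* s}$ (Proposition~\ref{prop:growth}).

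The two analytic inputs are $\P{D\ge\kappa}\le\e^{-\rho_1(\kappa)}$ (Lemma~\ref{lemma:Dtail}) and the Chernoff bound $\P{S_\kappa\le\varphi_1(\kappa)-y}\le\e^{-\theta y+\frac{\theta^2}{2}\varphi_2(\infty)}$, valid for every $\theta\ge0$ and $y\ge0$, which holds because $\varphi_1(\kappa)=\E{S_\kappa}$, $\Var(S_\kappa)=\varphi_2(\kappa)\le\varphi_2(\infty)<\infty$ by Assumption~\eqref{ass:varphi2}, and $\log\E{\e^{-\theta S_\kappa}}\le-\theta\varphi_1(\kappa)+\tfrac{\theta^2}2\varphi_2(\infty)$. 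The key identity is
\begin{equation*}
\rho_1(k)=\lambda^* u_t-d^*C_1+\cO(1),\qquad \rho_1(k')=\lambda^* u_t+d^*C_1'+\cO(1),
\end{equation*}
with $\cO(1)$ uniform in $t$ for $t$ large. It follows from the definitions of $k,k'$ by writing $\rho_1(\kappa)=d^*\varphi_1(\kappa)+\alpha(\kappa)=d^*\varphi_1(\kappa)+\cK_\alpha(\varphi_1(\kappa))$, using Lemma~\ref{lemma:func}~$(c)$ to absorb the bounded shift $\pm C_1$ (resp.\ $+C_1'$) together with the ceiling rounding in $\varphi_1(\kappa)$ into an $o(1)$ change of $\cK_\alpha$, invoking Assumption~\hyperref[ass:Kalpha]{$\cK_\alpha$} to replace $\cK_\alpha\big(\tfrac{\lambda^*}{\lambda^*+d^*}t-\tfrac1{\lambda^*+d^*}\cK_\alpha(r(t))\big)$ by $\cK_\alpha(r(t))+\cO(1)$, and simplifying. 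Since the number of individuals born near $\tau=u_t$ is of order $\e^{\lambda^*u_t}$, the factor $\P{D\ge\kappa}\le\e^{-\rho_1(\kappa)}$ cancels this growth and leaves only a geometric series in the ``gap'' variable.

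Concretely, for the first statement set $c_0:=C_2-C_1-A>0$ (we may take $C_2$ large). For $v$ born in the $j$‑th unit cohort $[u_t+C_2+j,\,u_t+C_2+j+1)$ one gets $(t+A)-\cB(v)\le\varphi_1(k)-(c_0+j)$, hence $g_k(\cB(v))\le\e^{-\rho_1(k)}\e^{-(\lambda^*+1)(c_0+j)+\cO(1)}$ by choosing $\theta=\lambda^*+1$; multiplying by the cohort‑size bound $C\e^{\lambda^*(u_t+C_2+j+1)}$, using $\e^{\lambda^*u_t-\rho_1(k)}=\e^{\cO(1)}$, and summing the geometric series in $j$ gives $\E{\#\{\text{offending }v\}}\le C_\star\,\e^{-C_2}$, with $C_\star$ depending on $C_1,A$ and the model but not on $t$ (for $t\ge t'$) nor on $C_2$; choosing $C_2$ large makes this $<\eps\,\P{\cS}$. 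The second statement has the same structure, but the sum over cohorts now runs over all $\tau\in[0,t+A]$: cohorts with $(t+A)-\tau\ge\varphi_1(k')$ contribute at most $C\e^{-\rho_1(k')}\sum_{j\le u_t-c_0'}\e^{\lambda^* j}$ of order $\e^{-\rho_1(k')}\e^{\lambda^*(u_t-c_0')}$ with $c_0':=C_1'-A$, and the remaining cohorts contribute a geometric series of the same order, so using $\e^{\lambda^*u_t-\rho_1(k')}=\e^{-d^*C_1'+\cO(1)}$ one obtains $\E{\#\{\text{offending }v\}}\le C_\star'\,\e^{-(\lambda^*+d^*)C_1'}$, made $<\eps\,\P{\cS}$ by taking $C_1'$ large. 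In both cases a final appeal to Markov's inequality yields the claimed $\mathbb P_\cS$‑lower bound.

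The part that needs care is the bookkeeping with $\cK_\alpha$: one must verify that the bounded perturbations ($\pm C_1$, $+C_1'$, the ceiling error, and the unit‑cohort discretisation) change $\cK_\alpha$ only by $o(1)$ — precisely the regularity afforded by Lemma~\ref{lemma:func}~$(c)$ with a constant (hence $o(t)$) shift — and that Assumption~\hyperref[ass:Kalpha]{$\cK_\alpha$} lets one identify $\cK_\alpha(\varphi_1(\kappa))$ with $\cK_\alpha(r(t))$ up to $\cO(1)$, uniformly for $t$ large; this is what makes the cancellation of $\e^{\lambda^* u_t}$ against $\e^{-\rho_1(\kappa)}$ exact. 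Everything else (the Chernoff estimate, $\P{D\ge\kappa}\le\e^{-\rho_1(\kappa)}$, the geometric sums, and $\E{|\cB(0,s)|}\le C\e^{\lambda^* s}$) is routine, and in particular the lifetime tail (Lemma~\ref{lemma:lifetime2nd}) and Lemma~\ref{lemma:survdeg} are not needed here, since discarding the aliveness constraint already suffices.
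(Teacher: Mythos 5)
Your proposal is correct and follows essentially the same route as the paper's proof: both discard the aliveness constraint (the paper also omits the event $S_{D+1}>\cdots$), both partition the birth times into unit cohorts, both use $\P{D\geq\kappa}\leq\e^{-\rho_1(\kappa)}$ from Lemma~\ref{lemma:Dtail}, both use a Chernoff bound with the variance estimate $-\log\E{\e^{-\theta S_\kappa}}\geq\theta\varphi_1(\kappa)-\tfrac{\theta^2}{2}\varphi_2(\kappa)$ from Assumption~\eqref{ass:varphi2}, both rely on the key cancellation $\rho_1(k)=\lambda^*u_t-d^*C_1+\cO(1)$ obtained from Lemma~\ref{lemma:func}~$(c)$ and Assumption~\hyperref[ass:Kalpha]{$\cK_\alpha$}, and both sum a convergent series in the cohort index $j$ and choose $C_2$ large. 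Your identity check is right, and the second statement is handled by the same bookkeeping (the paper only sketches it, so your extra detail is a welcome addition).

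There are two minor technical differences worth noting. First, the paper works via a union bound \emph{conditioned} on the high-probability event $\{\sup_{t\geq0}|\cB(0,t)|\e^{-\lambda^* t}\leq M\}$, bounding its complement via~\eqref{eq:supB} in Corollary~\ref{cor:growth}; you instead use a first-moment bound $\E{\#\text{offending}}$ followed by Markov, invoking $\E{|\cB(0,s)|}\leq C\e^{\lambda^*s}$. That expectation bound is true under Assumption~\eqref{ass:C1} (it is standard CMJ renewal theory, $\E{Z_\mathrm{b}(t)}\e^{-\lambda^* t}\to\text{const}$), but it is \emph{not} literally what Proposition~\ref{prop:growth} states (that proposition is about $|\cA_t^\cont|$ in the a.s.\ and $L^1$ senses, not about $\cB$), so you would need to supply a reference or a short argument; the paper's event-based route, while slightly clunkier, sidesteps this by needing only the tail bound~\eqref{eq:supB}, which \emph{is} in the paper. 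Second, you fix a single Chernoff parameter $\theta=\lambda^*+1$ yielding a geometric $\e^{-j}$ decay in the cohort sum; the paper optimises $\theta$ over $j$, taking $\theta=(C_2+(j-1)-(A+C_1))/\varphi_2(\infty)$, which produces a Gaussian-type $\e^{-\Theta(j^2)}$ bound. Either suffices for the summability and the $\e^{-\Theta(C_2)}$ dependence needed to absorb into $\eps$, so the two choices are interchangeable here; yours is a bit leaner.
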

	
	The first result states that all individuals born `too late', namely after time $u_t+C_2$, that survive up to time $s\in [t-A,t+A]$, have a degree smaller than $k$ with probability close to $1$. The second result tells us that no individual that survives up to time $s\in[t-A,t+A]$ has degree larger than $k'$ with probability close to one.
	
	\begin{proof}
		We bound the probability of the complement of the event of interest from above by $\eps$. That is, we bound
		\be \ba 
		\mathbb P_\cS{}&\big(\exists s\in[t-A,t+A]\ \exists v\in \cB(u_t+C_2,t)\cap \cA_s^\cont: \deg_s(v)\geq k\big)\\ 
		={}&\Ps{\exists s\in[t-A,t+A]\ \exists v\in \cB(u_t+C_2,t): D^{(v)}\geq k, S_k^{(v)}\leq s-\cB(v), S^{(v)}_{D^{(v)}+1}>s-\cB(v)}.
		\ea \ee 
		We now introduce the event $\{\sup_{t\geq0 }|\cB(0,t)|\e^{-\lambda^* t}\leq M\}$, for some large but fixed $M>0$ to be determined. We then  partition the set $\cB(u_t,t)$ into sets of individuals born in the intervals $(u_t+C_2+(j-1),u_t+C_2+j)$, i.e.\ into sets $\cB(u_t+C_2+(j-1),u_t+C_2+j)$, with $j\in [\ceil{t-u_t-C_2}]$ and apply a union bound. For individuals $v\in \cB(u_t+C_2+(j-1),u_t+C_2+j)$, we can bound their birth-time $\cB(v)$ from below and above by $u_t+C_2+(j-1)$ and $u_t+C_2+j$, respectively. Similarly we can bound $s$ from below and above by $t-A$ and $t+A$, respectively. As a result, we bound the above probability from above by
		\be \ba \label{eq:sumub}
		\sum_{j=1}^{\ceil{t-u_t-C_2}}\!\!\!\!{}&\frac{M}{\P{\cS}}\e^{\lambda^*(u_t+j)}\P{D\geq k, S_k\leq t+A-(u_t+C_2+(j-1)), S_{D+1}>t-A-(u_t+C_2+j)}\\ 
		&+\mathbb P_\cS\Big(\sup_{t\geq0 }|\cB(0,t)|\e^{-\lambda^* t}\geq M\Big).
		\ea\ee 
		We now choose $M$ large enough so that, by~\eqref{eq:supB} in Corollary~\ref{cor:growth}, the probability on the second line is at most $\eps/2$. It thus remains to bound the sum from above by $\eps/2$ to arrive at the desired result.  For any $j\in[\ceil{t-u_t}]$, we omit the part of the event that $S_{D+1}>t-A-(u_t+C_2+j)$, use the independence of $D$ and $S_k$ and use a Chernoff bound with $\theta>0$ to arrive at the upper bound
		\be 
		\P{D\geq k}\E{\e^{-\theta S_k}}\exp\big(\theta(t+A-(u_t+C_2+(j-1)))\big).
		\ee 
		Applying the inequality $\log(1+x)\geq x-x^2/2$ for $x>0$, we can bound
		\be 
		-\log \E{\e^{-\theta S_k}}=\sum_{i=0}^{k-1}\log\Big(1+\frac{\theta}{b(i)+d(i)}\Big)\geq \theta \varphi_1(k)-\frac{\theta^2}{2}\varphi_2(k). 
		\ee 
		Since $\varphi_2(k)$ is increasing and converges to $\varphi_2(\infty)\coloneq \lim_{k\to\infty}\varphi_2(k)$ as $k$ tends to infinity by Assumption~\eqref{ass:varphi2}, and by using Lemma~\ref{lemma:Dtail}, we finally arrive at the upper bound
		\be \ba
		\P{D\geq k}{}&\E{\e^{-\theta S_k}}\exp\big(\theta(t+A-(u_t+C_2+(j-1)))\big)\\ 
		\leq \exp{}&\Big(-\rho_1(k)-\theta\varphi_1(k)+\frac{\theta^2}{2}\varphi_2(\infty)+\theta(t+A-(u_t+C_2+(j-1)))\Big).
		\ea\ee 
		Note that $\rho_1$ and $\varphi_1$ are increasing and that $\rho_1(x)=d^*\varphi_1(x)+\alpha(x)$ for $x\geq 0$. Then, also using the definitions of $k$ and $u_t$ in~\eqref{eq:kut}, we obtain the upper bound
		\be \ba 
		\exp\Big({}&-\frac{\lambda^*d^*}{\lambda^*+d^*}t+\frac{d^*}{\lambda^*+d^*}\cK_\alpha(r(t))+d^*C_1-\cK_\alpha\big(\tfrac{\lambda^*}{\lambda^*+d^*}t-\tfrac{1}{\lambda^*+d^*}\cK_\alpha(r(t))-C_1\big)\\ 
		&+\theta \big(C_1-C_2+A-(j-1)\big)+\frac{\theta^2}{2}\varphi_2(\infty)\Big).
		\ea\ee 
		We apply Lemma~\ref{lemma:func}$(c)$ and Assumption~\hyperref[ass:Kalpha]{$\cK_\alpha$} and set $\theta=(C_2+(j-1)-(A+C_1))/\varphi_2(\infty)$ (which is positive for all $j\geq 1$ when $C_2$ is large), to bound this for all $t>t'$ by
		\be 
		\exp\Big(-\lambda^*\Big(\frac{d^*}{\lambda^*+d^*}t+\frac{1}{\lambda^*+d^*}\cK_\alpha(r(t))\Big)+C+d^*C_1-\frac12\frac{(C_2+(j-1)-(A+C_1))^2}{\varphi_2(\infty)}\Big),
		\ee 
		where $C>0$ and $t'$ are fixed constants, where $C$ is independent of all other constants and parameters, and $t'=t'(C_1,d^*,\lambda^*)$. By substituting this upper bound in the sum in~\eqref{eq:sumub}, we can bound it from above by 
		\be 
		\frac{M}{\P{\cS}}\sum_{j=1}^{\ceil{t-u_t-C_2}}\exp\Big(\lambda^*C_2+\lambda^*j+C+d^*C_1-\frac12\frac{(C_2+(j-1)-(A+C_1))^2}{\varphi_2(\infty)}\Big).
		\ee 
		We now choose $C_2$ large enough so that $\lambda^*-(C_2-(A+C_1))/\varphi_2(\infty)<0$. This implies that the summands are exponentially decreasing in $j$ and are maximised when $j=1$. This yields, for some constant $c=c(C_2)>0$, the upper bound 
		\be 
		\frac{Mc}{\P{\cS}}\exp\Big(\lambda^*C_2+\lambda^*+C+d^*C_1-\frac12\frac{(C_2-(A+C_1))^2}{\varphi_2(\infty)}\Big).
		\ee 
		The constant $c$ is decreasing in $C_2$, so that we can make this upper bound smaller than $\eps/2$ by again choosing $C_2$ sufficiently large. Using this in~\eqref{eq:sumub} thus yields the desired result.
		
		For the second result in Lemma~\ref{lemma:oldsmalloff}, an analogous proof can be used (where, at least intuitively, one would take $C_2<0$ such that $|C_2|$ is large but fixed and $C_1<0$ such that $|C_1|$ is  sufficiently large and set $C_1'=-C_1$). We leave the details to the reader.
	\end{proof}
	
	\subsubsection{There exists an old individual with large offspring} 
	
	The next step is to show that there exists an `old' individual that survives until time $\approx t$ and has a sufficiently large offspring. Combined with Proposition~\ref{prop:Otpers} and Lemma~\ref{lemma:oldsmalloff}, this implies that the birth-time of the individual with the largest offspring (i.e.\ $I_t^\cont$) cannot be `too large' compared to $O_t^\cont$. We make this precise in the following result.
	
	\begin{lemma}\label{lemma:existoldoff}
		Suppose $b$ and $d$ satisfy Assumptions~\eqref{ass:A1}, \eqref{ass:A2}, \eqref{ass:C1}, and \eqref{ass:varphi2}. Recall $R$ from~\eqref{eq:R} and suppose that $\lim_{i\to\infty}d(i)=d^*\in[0,R)$. Finally, suppose that Assumption~\hyperref[ass:Kalpha]{$\cK_\alpha$} is satisfied. Recall $k=k(C_1)$ and $u_t$ from~\eqref{eq:kut}. For any $\eps,A>0$ there exist $C_1,t'>0$ such that for all $t>t'$, 
		\be 
		\Ps{\forall s\in[t-A,t+A]\ \exists v\in \cB(u_t,u_t+\tfrac12C_1)\cap \cA_s^\cont: \deg_s(v)\geq k}\geq 1-\eps. 
		\ee  
	\end{lemma}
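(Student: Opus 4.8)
The plan is to establish the existence of a suitable ``old'' individual by a second-moment / Paley--Zygmund argument applied to the number of individuals born in the window $\cB(u_t,u_t+\tfrac12C_1)$ that have at least $k$ children and are still alive throughout $[t-A,t+A]$. First I would fix the event $\cG_M:=\{M^{-1}\leq |\cB(0,s)|\e^{-\lambda^*s}\leq M \text{ for all }s\geq 0\}$, which by~\eqref{eq:supB} and~\eqref{eq:infB} in Corollary~\ref{cor:growth} has $\mathbb P_\cS$-probability at least $1-\eps/2$ once $M=M(\eps)$ is large. On $\cG_M$ the number of individuals born in the interval $(u_t,u_t+\tfrac12C_1)$ is of order $\e^{\lambda^* u_t}$, so it suffices to show that, \emph{conditionally on} $\cF_{u_t}$ and on $|\cB(u_t,u_t+\tfrac12C_1)|$ being of this order, with high probability at least one such individual achieves $D^{(v)}\geq k$, $S_k^{(v)}\le (t-A)-\cB(v)$ and $S^{(v)}_{D^{(v)}+1}>(t+A)-\cB(v)$, i.e. is born with $\geq k$ children in time and survives past $t+A$.

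The key quantitative input is a lower bound on the single-individual probability
\[
p_t:=\P{D\geq k,\ S_k\leq \delta_t,\ S_{D+1}>\delta_t'}
\]
for $\delta_t$ of order $(t-A)-(u_t+\tfrac12C_1)=\tfrac{\lambda^*}{\lambda^*+d^*}t-\tfrac{1}{\lambda^*+d^*}\cK_\alpha(r(t))+\cO(1)$ and $\delta_t'$ of the same order up to additive constants. For this I would use Lemma~\ref{lemma:survdeg}$(ii)$ (with $x$ slightly above $d^*$, legitimate since $d(i)\to d^*$ and $d^*<R$) to get
\[
p_t\ \geq\ \e^{-x(\delta_t'-\delta_t)}\,\P{D\geq k}\,\E{\ind_{\{S_k\leq \delta_t\}}\e^{x(S_k-\delta_t)}}.
\]
Since $\varphi_2$ converges, $S_k-\varphi_1(k)\to M_\infty$ a.s. and in $L^2$, and $\varphi_1(k)=\tfrac{\lambda^*}{\lambda^*+d^*}t-\tfrac{1}{\lambda^*+d^*}\cK_\alpha(r(t))-C_1+\cO(1)\leq \delta_t$ for $C_1$ large, so $\P{S_k\leq\delta_t}\to 1$ and the expectation above is bounded below by a constant. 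Combining with $\P{D\geq k}=\e^{-\rho_1(k)-\cO(1)}$ from Lemma~\ref{lemma:Dtail} and $\rho_1(k)=d^*\varphi_1(k)+\cK_\alpha(\varphi_1(k))+\cO(1)$, together with Lemma~\ref{lemma:func}$(c)$ and Assumption~\hyperref[ass:Kalpha]{$\cK_\alpha$} (exactly as in the proof of Proposition~\ref{prop:Otpers}, cf.~\eqref{eq:Kalphabound}--\eqref{eq:Kalphabound2}), one gets $p_t\geq c\,\e^{-\lambda^* u_t}$ for a constant $c=c(C_1,A,M)>0$ and all $t$ large. Hence the conditional expected number of good individuals among the $\gtrsim M^{-1}\e^{\lambda^* u_t}$ born in the window is at least $cM^{-1}$, bounded away from zero.

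To upgrade this expectation bound to a ``with probability $\geq 1-\eps$ at least one exists'' statement I would use that the individuals born in $(u_t,u_t+\tfrac12C_1)$ have, conditionally on $\cF_{u_t}$ and on their birth-times, mutually independent offspring processes $(D^{(v)},(S^{(v)}_k)_k)$, so the count of good individuals stochastically dominates a Binomial with mean $\gtrsim M^{-1}\e^{\lambda^* u_t}$ and per-trial success probability $\gtrsim c\,\e^{-\lambda^* u_t}\to 0$; this converges to a Poisson with parameter bounded below, whose probability of being zero is bounded away from one — and by taking $C_1$ (hence the window length and the slack in $\delta_t,\delta_t'$, and thus $p_t$) large enough, that Poisson parameter can be made as large as desired, making $\P{\text{no good individual}}<\eps/2$. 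A union bound over $s$ is unnecessary since requiring survival past $t+A$ and birth of $k$ children before $t-A$ already forces $d_s(v)\geq k$ for \emph{all} $s\in[t-A,t+A]$ simultaneously. The main obstacle is purely bookkeeping: tracking the additive $\cO(1)$ constants through the $\cK_\alpha$-manipulations so that the final exponent is $-\lambda^* u_t+\cO(1)$ with the $\cO(1)$ uniform in $t$ (controllable in $A$ but allowed to depend on $C_1$), and ensuring the window width $\tfrac12 C_1$ grows the Poisson mean rather than the per-individual rate growing it — i.e. choosing $C_1$ first, then $M$, then $t'$, in the right order, mirroring the constant-chasing already done in Proposition~\ref{prop:Otpers}.
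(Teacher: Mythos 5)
Your proposal follows essentially the same route as the paper: lower-bound a single-individual success probability via Lemma~\ref{lemma:survdeg}$(ii)$, Lemma~\ref{lemma:Dtail}, the convergence $S_k-\varphi_1(k)\to M_\infty$, Lemma~\ref{lemma:func}$(c)$ and Assumption~\hyperref[ass:Kalpha]{$\cK_\alpha$}; control $|\cB(u_t,u_t+\tfrac12 C_1)|$ by Corollary~\ref{cor:growth}; and convert ``many independent Bernoulli trials'' into ``at least one success whp'' (the paper does this with $1-x\leq \e^{-x}$, which is what your Binomial-to-Poisson heuristic amounts to; the Paley--Zygmund label in your opening sentence is a red herring since you never use a second moment). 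Two points need repair.

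First, the order of constants must be $M$ \emph{before} $C_1$, not the reverse as you wrote. The three error terms behave as follows: the $\inf$-term $\Ps{\inf_{s}|\cB(0,s)|\e^{-\lambda^* s}\leq 2M^{-1}}$ shrinks only by increasing $M$ and is insensitive to $C_1$; the $\sup$-term $\Ps{\sup_{s}|\cB(0,s)|\e^{-\lambda^* s}\geq M^{-1}\e^{\lambda^* C_1/2}}$ and the ``no success'' term $\exp\bigl(-\Theta(M^{-1}\e^{(\lambda^*/2-\eta)C_1})\bigr)$ both shrink by taking $C_1$ large \emph{for fixed} $M$. If you fix $C_1$ first and then enlarge $M$ to kill the $\inf$-term, you simultaneously destroy both of the other bounds (the Poisson mean is $\propto M^{-1}$). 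The paper's choice — pick $M$ from the $\inf$-term, then pick $C_1$ large enough given that $M$ — is the one that closes.

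Second, the step ``$\P{S_k\leq\delta_t}\to 1$ so the expectation $\E{\ind_{\{S_k\leq\delta_t\}}\e^{x(S_k-\delta_t)}}$ is bounded below by a constant'' glosses over the fact that on the typical event $S_k\approx\varphi_1(k)=\delta_t - (C_1/2 - A)+o(1)$, the exponent is $\approx -x(C_1/2-A)$, which is very negative for large $C_1$; the expectation is \emph{not} bounded away from $0$ uniformly in $C_1$. What one actually does (as the paper does) is restrict the indicator to the window $S_k\in[t-u_t-A-C_1,\,t-u_t-A-\tfrac12 C_1]$, on which $\e^{x(S_k-\delta_t)}\geq\e^{-x C_1/2}$ and whose $\P{\cdot}$-mass converges to $\P{M_\infty\in[-A,-A+\tfrac12 C_1]}>0$. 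The resulting lower bound on $p_t$ is $\propto \e^{-\lambda^* u_t-\eta C_1}$, not $\propto\e^{-\lambda^* u_t}$ with a $C_1$-uniform constant; the argument still works because the window size contributes $\e^{\lambda^* C_1/2}$, but the exponent bookkeeping you flag as ``the main obstacle'' genuinely has to be carried out, and your ``bounded below by a constant'' claim as stated is not quite right.
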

	
	\begin{proof}
		We first rewrite the probability as 
		\be \ba 
		\mathbb P_\cS{}&\big(\forall s\in[t-A,t+A]\ \exists v\in \cB(u_t,u_t+\tfrac12C_1): D^{(v)}\geq k, S^{(v)}_k\leq s-\cB(v), S^{(v)}_{D^{(v)}+1}>s-\cB(v)\big)\\ 
		\geq {}&\mathbb P_\cS\big(\exists v\in \cB(u_t,u_t+\tfrac12C_1): D^{(v)}\geq k, S^{(v)}_k\leq t-A-u_t-\tfrac12C_1, S^{(v)}_{D^{(v)}+1}>t+A-u_t\big).
		\ea \ee 
		Here we bound $s$ and $\cB(v)$ from below and above to obtain the inequality. Similar to the steps in~\eqref{eq:Otbound} through~\eqref{eq:Otmsplit}, we bound the right-hand side from below for any $m\in\N$ by 
		\be \ba \label{eq:mlb}
		1{}&-\Ps{|\cB(u_t,u_t+\tfrac12C_1)|\leq m}\\ 
		&-\P{\cS}^{-1}\exp\Big(-m \P{D\geq k, S_k\leq t-A-u_t-\tfrac12C_1, S_{D+1}>t+A-u_t}\Big).
		\ea\ee 
		We then bound the probability in the exponential term from below. For any $\eta>0$ there exists $t'>0$ such that $d(i)\leq d^*+\eta$ for all $i\geq k$ and all $t>t'$ (since $k$ tends to infinity with $t$). Hence, by using Lemma~\ref{lemma:survdeg},
		\be\ba \label{eq:problb}
		\mathbb P({}&D\geq k, S_k\leq t-A-u_t-\tfrac12C_1, S_{D+1}>t+A-u_t)\\ 
		\geq{}&\P{D\geq k}\E{\ind\{S_k\leq t-u_t-A-\tfrac12C_1\}\exp\big(-(d^*+\eta)(t-u_t-A-\tfrac12C_1-S_k)\big)}\\ 
		&\times \exp\big(-(d^*+\eta)(2A+\tfrac12C_1)\big).
		\ea\ee 
		We bound the expected value from below by restricting the range of $S_k$ in the indicator to $S_k\in[t-u_t-A-C_1,t-u_t-A-\tfrac12C_1]$, to obtain 
		\be \ba \label{eq:explb}
		\mathbb E\big[{}&\ind\{S_k\leq t-u_t-A-\tfrac12C_1\}\exp\big(-(d^*+\eta)(t-u_t-A-\tfrac12C_1-S_k)\big)\big]\\ 
		\geq{}& \mathbb E\big[\ind\{S_k\in[t-u_t-A-C_1, t-u_t-A-\tfrac12C_1]\}\exp\big(-(d^*+\eta)(t-u_t-A-\tfrac12C_1-S_k)\big)\big]\\ 
		\geq{}& \P{S_k\in [t-u_t-A-C_1, t-u_t-A-\tfrac12C_1]}\e^{-\frac12(d^*+\eta)C_1}.
		\ea\ee 
		We then use that $S_k-\varphi_1(k)$ converges almost surely to some random variable $M_\infty$ as $k$ tends to infinity, since $\varphi_2(k)=\Var(S_k)$ converges by Assumption~\eqref{ass:varphi2}. By the choice of $k$ (as in~\eqref{eq:kut}) and since $b+d$ tends to infinity (since $\varphi_2$ converges), we have
		\be \label{eq:varphi1exp}
		\varphi_1(k)=\frac{\lambda^*}{\lambda^*+d^*}t-\frac{1}{\lambda^*+d^*}\cK_\alpha(r(t))-C_1+o(1)=t-u_t-C_1+o(1). 
		\ee 
		As a result, we can write 
		\be \ba 
		\P{S_k\in [t-u_t-A-C_1, t-u_t-A-\tfrac12C_1]}&=\P{S_k-\varphi_1(k)\in [-A+o(1),-A+\tfrac12C_1+o(1)]}\\ 
		&=\P{M_\infty\in[-A,-A+\tfrac12C_1]}+o(1).
		\ea \ee 
		We observe that the right-hand side is strictly positive for any choice of $A$ and $C_1>0$ and can thus be bounded from below by 
		\be \label{eq:cinf}
		c_\infty(A,C_1)\coloneq \frac12\P{M_\infty\in[-A,-A+\tfrac12C_1]},
		\ee 
		for all $t$ (and thus $k$) sufficiently large. Combining this with~\eqref{eq:explb} in~\eqref{eq:problb} yields the lower bound
		\be
		\mathbb P(D\geq k, S_k\leq t-A-u_t, S_{D+1}>t+A-u_t+\tfrac12C_1)\geq c_\infty(A,C_1)\P{D\geq k}\e^{-(d^*+\eta)(2A+C_1)}.
		\ee 
		Then, we use Lemma~\ref{lemma:Dtail} to obtain, for some small constant $\kappa>0$, the lower bound $\P{D\geq k}\geq \kappa \exp(-\rho_1(k))$. As in~\eqref{eq:varphi1exp} we can write, by the definition of $\alpha$ and $\cK_\alpha$ in~\eqref{eq:alpha} and~\eqref{eq:Ks}, respectively,
		\be \ba 
		\rho_1(k)={}&\rho_1\big(\varphi_1^{-1}\big(\tfrac{\lambda^*}{\lambda^*+d^*}t-\tfrac{1}{\lambda^*+d^*}\cK_\alpha(r(t))-C_1\big)\big)+o(1)\\ 
		={}& d^*\Big[\frac{\lambda^*}{\lambda^*+d^*}t-\frac{1}{\lambda^*+d^*}\cK_\alpha(r(t))-C_1\Big]+\cK_\alpha\big(\tfrac{\lambda^*}{\lambda^*+d^*}t-\tfrac{1}{\lambda^*+d^*}\cK_\alpha(r(t))-C_1\big)+o(1).
		\ea \ee 
		Using Lemma~\ref{lemma:func}$(c)$ and Assumption~\hyperref[ass:Kalpha]{$\cK_\alpha$}, we can bound the right-hand side from above for all $t>t''$ by 
		\be 
		\lambda^*\Big[\frac{d^*}{\lambda^*+d^*}t+\frac{1}{\lambda^*+d^*}\cK_\alpha(r(t))\Big]-d^*C_1+C=\lambda^*u_t-d^*C_1+C, 
		\ee 
		where $C>0$ is some large constant, independent of all other constants and parameters, and $t''=t''(C_1,d^*,\lambda^*)$. Hence,
		\be
		\P{D\geq k}\geq \kappa\e^{-\lambda^*u_t+d^*C_1-C}.
		\ee 
		To conclude, we thus obtain the lower bound, 
		\be \ba 
		\mathbb P{}&(D\geq k, S_k\leq t-A-u_t-\tfrac12C_1, S_{D+1}>t+A-u_t)\\ 
		&\geq \kappa c_\infty(A,C_1)\exp\big(-\lambda^*u_t-\eta C_1-2A(d^*+\eta)-C\big).
		\ea\ee 
		We use this in~\eqref{eq:mlb} and set, for some large constant $M>0$ to be determined, 
		\be 
		m\coloneq M^{-1}\exp\big(\lambda^*(u_t+\tfrac12C_1)\big),
		\ee 
		to arrive at 
		\be \ba \label{eq:finbound}
		\mathbb P_\cS(\forall{}& s\in[t-A,t+A]\ \exists v\in \cB(u_t,u_t+\tfrac12C_1)\cap \cA_s^\cont: \deg_s(v)\geq k)\\ 
		\geq1{}&-\Ps{|\cB(u_t,u_t+\tfrac12C_1)|\leq M^{-1}\exp\big(\lambda^*(u_t+\tfrac12C_1)\big)}\\ 
		&-\P{\cS}^{-1}\exp\Big(- \frac{\kappa c_\infty(A,C_1)}{M}\exp\big([\tfrac12\lambda^*-\eta]C_1-2A(d^*+\eta)-C\big)\Big).
		\ea \ee 
		We bound the probability on the second line by
		\be \ba 
		\mathbb P_\cS\big({}&|\cB(u_t,u_t+\tfrac12C_1)|\leq M^{-1}\exp\big(\lambda^*(u_t+\tfrac12C_1)\big)\\ 
		={}&\Ps{|\cB(0,u_t+\tfrac12C_1)|-|\cB(0,u_t)|\leq M^{-1}\exp\big(\lambda^*(u_t+\tfrac12C_1)\big)}\\ 
		\leq{}&\Ps{|\cB(0,u_t+\tfrac12C_1)|\exp\big(-\lambda^*(u_t+\tfrac12C_1)\big) \leq 2M^{-1}}\\
		&+\Ps{ |\cB(0,u_t)|\exp\big(-\lambda^*u_t\big)\geq M^{-1}\exp\big(\tfrac{\lambda^*}{2}C_1\big)}\\ 
		\leq{}&\Ps{\inf_{t\geq0}|\cB(0,t)|\e^{-\lambda^*t} \leq 2M^{-1}}+\P{\sup_{t\geq 0}|\cB(0,t)|\e^{-\lambda^*t}\geq M^{-1}\exp\big( \tfrac{\lambda^*}{2}C_1\big)}.
		\ea \ee 
		We first choose $M$ sufficiently large so that, by~\eqref{eq:infB} in Corollary~\ref{cor:growth}, the first term on the last line is smaller than $\eps/3$. Next, given this $M$, we notice that the second term on the last line is decreasing in $C_1$. Hence, by~\eqref{eq:supB} in Corollary~\ref{cor:growth}, there exists $C_1'>0$ such that for all $C_1>C_1'$, the second term is at most $\eps/3$. We finally turn to the term on the last line of~\eqref{eq:finbound}. Here, we note that $c_\infty(A,C_1)$, as defined in~\eqref{eq:cinf}, is increasing in $C_1$, so that 
		\be 
		c_\infty(A,C_1)\geq c_\infty(A,C'_1)>0,
		\ee 
		for all $C_1>C_1'$. Similarly, by choosing $\eta<\lambda^*/2$, the inner exponential term on the last line of~\eqref{eq:finbound} is also increasing in $C_1$. As a result, we can take $C_1>C_1'$ sufficiently large so that the last line of~\eqref{eq:finbound} is at most $\eps/3$. This yields the desired lower bound and concludes the proof.
	\end{proof}

	\subsection{Persistence in the PAVD model} \label{sec:pers}
	
	In this sub-section we combine the result from the previous sub-sections to prove that $(I^\cont_t-O^\cont_t)_{t\geq 0}$ is a tight sequence of random variables. We then make the correspondence in~\eqref{eq:OtItequiv} precise to translate this into the results in Theorem~\ref{thrm:conv}.
	
	The first part is made precise in the following result. 
	
	\begin{proposition}\label{prop:ItOttight} 
		Suppose $b$ and $d$ satisfy Assumptions~\eqref{ass:A1}, \eqref{ass:A2}, \eqref{ass:C1}, and \eqref{ass:varphi2}. Recall $R$ from~\eqref{eq:R} and suppose that $\lim_{i\to\infty}d(i)=d^*\in[0,R)$. Finally, suppose that Assumption~\hyperref[ass:Kalpha]{$\cK_\alpha$} is satisfied. For any $\eps,A>0$ there exists $M=M(\eps,A),t'>0$ such that for all $t>t'$,
		\be 
		\Ps{\forall s\in[t-A,t+A]: I_s^\cont-O^\cont_s<M}\geq 1-\eps.
		\ee 
	\end{proposition}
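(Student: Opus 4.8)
The plan is to combine the three preceding lemmas — Proposition~\ref{prop:Otpers} (the oldest alive individual), Lemma~\ref{lemma:oldsmalloff} (young individuals have small offspring and no individual has too large an offspring), and Lemma~\ref{lemma:existoldoff} (there exists an old individual with large offspring) — on a single high-probability event, and then read off the bound on $I_s^\cont - O_s^\cont$ directly.

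First I would fix $\eps, A > 0$ and apply each of the three results with error parameter $\eps/4$ (or $\eps/3$; the precise split is immaterial). Proposition~\ref{prop:Otpers} gives $K$ and a time threshold such that, with $\mathbb P_\cS$-probability at least $1-\eps/4$, for all $s \in [t-A, t+A]$ we have $|O_s^\cont - \tfrac{d^*}{\lambda^*+d^*}t - \tfrac{1}{\lambda^*+d^*}\cK_\alpha(r(t))| < K$; in particular $O_s^\cont < u_t + K$, where $u_t$ is as in~\eqref{eq:kut}. Next, Lemma~\ref{lemma:existoldoff} supplies a constant $C_1 > 0$ such that, with probability at least $1-\eps/4$, for all $s \in [t-A,t+A]$ there is an individual $v^* \in \cB(u_t, u_t + \tfrac12 C_1) \cap \cA_s^\cont$ with $d_s(v^*) \geq k = k(C_1)$. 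For that same $C_1$, apply the first part of Lemma~\ref{lemma:oldsmalloff}, which yields a constant $C_2 > 0$ such that, with probability at least $1-\eps/4$, for all $s \in [t-A,t+A]$ every individual $v \in \cB(u_t + C_2, t) \cap \cA_s^\cont$ has $d_s(v) < k$. Intersecting these three events (plus, if needed, the event that the process survives, which is implicit under $\mathbb P_\cS$), the total probability is at least $1-\eps$, provided $t$ exceeds the maximum of the three thresholds.

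On this intersection, fix $s \in [t-A, t+A]$ and consider the richest alive individual at time $s$, whose birth time is $I_s^\cont$. Since $v^* \in \cA_s^\cont$ has $d_s(v^*) \geq k$, the richest individual has offspring at least $k$ at time $s$, so it is \emph{not} one of the individuals covered by Lemma~\ref{lemma:oldsmalloff}; hence its birth time satisfies $I_s^\cont \leq u_t + C_2$ (any individual born after $u_t + C_2$ and alive at time $s$ has offspring strictly less than $k$). On the other hand, $O_s^\cont > u_t - K$ by the reverse inequality in Proposition~\ref{prop:Otpers}... more simply, since $O_s^\cont$ is the birth time of the oldest alive individual and $v^*$ is alive at time $s$ with $\cB(v^*) \geq u_t$, while Proposition~\ref{prop:Otpers} gives $O_s^\cont \geq \tfrac{d^*}{\lambda^*+d^*}t + \tfrac{1}{\lambda^*+d^*}\cK_\alpha(r(t)) - K = u_t - K$. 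Therefore
\be
I_s^\cont - O_s^\cont \leq (u_t + C_2) - (u_t - K) = C_2 + K =: M,
\ee
uniformly over $s \in [t-A, t+A]$. Setting $M = M(\eps, A) := C_2 + K$ and letting $t'$ be the maximum of the three thresholds arising above completes the argument.

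The step I expect to require the most care is the bookkeeping of \emph{which} $C_1$ feeds into \emph{which} lemma: Lemma~\ref{lemma:existoldoff} produces $C_1$ (depending on $\eps, A$), and this value of $C_1$ then determines the cutoff parameter in $k = k(C_1)$, which must be the \emph{same} $k$ appearing in the ``small offspring'' conclusion of Lemma~\ref{lemma:oldsmalloff}; one must invoke Lemma~\ref{lemma:oldsmalloff} with that already-chosen $C_1$ to obtain $C_2 = C_2(C_1)$. The quantifier order in all three lemmas is ``for any $\eps, A$ there exist constants and $t'$ such that for all $t > t'$'', so one simply takes $t'$ to be the maximum of the finitely many thresholds. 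A minor point worth spelling out is that the richest alive individual at time $s$ is well-defined and alive, so comparing its offspring to $d_s(v^*) \geq k$ is legitimate; and that $\cB(v^*) \le u_t + \tfrac12 C_1 \le u_t + C_2$ once $C_2 \ge \tfrac12 C_1$, which we may assume by enlarging $C_2$. No genuinely new estimate is needed — the proposition is a clean synthesis of the three preceding results.
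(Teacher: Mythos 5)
Your proof is correct and follows essentially the same argument as the paper: intersect the high-probability events from Proposition~\ref{prop:Otpers}, Lemma~\ref{lemma:existoldoff}, and Lemma~\ref{lemma:oldsmalloff} (the paper splits as $\eps/3$ rather than $\eps/4$, which is immaterial), then conclude $I_s^\cont < u_t + C_2$ and $O_s^\cont > u_t - K$ on that intersection, giving $M = C_2 + K$. The bookkeeping point you flag about choosing $C_1$ first and then $C_2 = C_2(C_1) \geq \tfrac12 C_1$ is exactly how the paper proceeds.
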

	
	\begin{proof}
		Fix $\eps>0$ small, $A>0$ and $t'>0$ large (according to Proposition~\ref{prop:Otpers}, Lemma~\ref{lemma:oldsmalloff}, and Lemma~\ref{lemma:existoldoff}) and recall $k=k(C_1)$ and $u_t$ from~\eqref{eq:kut}. Then, for any $t>t'$:
		\begin{enumerate}
			\item We take $K>0$ sufficiently large so that, with probability at least $1-\eps/3$, for all $s\in [t-A,t+A]$ we have  $O_s^\cont >u_t-K$. 
			\item We take $C_1>0$ sufficiently large so that, with probability at least $1-\eps/3$, for all $s\in[t-A,t+A]$ there exists $ v\in \cB(u_t,u_t+\tfrac12C_1)\cap \cA_s^\cont$ such that $\deg_s(v)\geq k$. 
			\item We take $C_2>\tfrac12C_1$ sufficiently large so that, with probability at least $1-\eps/3$, for all $s\in[t-A,t+A]$ and for all  $v\in \cB(u_t+C_2,t)\cap \cA_s^\cont$ we have $\deg_s(v)<k$.
		\end{enumerate} 
		$(a)$, $(b)$, and $(c)$ follow from Proposition~\ref{prop:Otpers}, Lemma~\ref{lemma:existoldoff}, and Lemma~\ref{lemma:oldsmalloff}, respectively. Then, $(b)$ and $(c)$ combined imply that $I_s^\cont\leq u_t+C_2$ for all $s\in[t-A,t+A]$, with probability at least $1-\tfrac23\eps$.  Combined with $(a)$, we thus conclude that the result follows for $M=C_2+K$.
	\end{proof}
	
	We finally prove Theorem~\ref{thrm:conv} by translating the results proved in this section for the CTPAVD branching process to the desired results for the discrete time PAVD model, leveraging 	Proposition~\ref{prop:embed}. 
	
	\begin{proof}[Proof of Theorem~\ref{thrm:conv}]
		We fix $\eps>0$ and use~\eqref{eq:Nconv} to conclude that there exist $n_0\in\N$ and $A=A(\eps,n_0)>0$ sufficiently large such that 
		\be \label{eq:taubound}
		\Ps{\tau_n \in\big[\tfrac{1}{\lambda^*}\log n-A,\tfrac{1}{\lambda^*}\log n+A\big]}\geq 1-\eps/3, \qquad \text{for all }n\geq n_0.
		\ee 
		We let $F_{n,A}\coloneq \big[\frac{1}{\lambda^*}\log n-A,\frac{1}{\lambda^*}\log n+A\big]$ for ease of writing.	Then, 
		\be \ba \label{eq:Otaun}
		\mathbb P_\cS{}&\bigg(\Big|O_{\tau_n}^\cont-\frac{d^*}{\lambda^*(\lambda^*+d^*)}\log n-\frac{1}{\lambda^*+d^*}\cK_\alpha\big(r\big(\tfrac{1}{\lambda^*}\log n\big)\big)\Big|<M\bigg)\\
		\geq{}&\Ps{\forall s\in F_{n,A}:\Big| O_{s}^\cont-\frac{d^*}{\lambda^*(\lambda^*+d^*)}\log n-\frac{1}{\lambda^*+d^*}\cK_\alpha\big(r\big(\tfrac{1}{\lambda^*}\log n\big)\big)\Big|<M}\\ 
		&-\Ps{\tau_n\not\in F_{n,A}}.
		\ea \ee 
		The second term on the right-hand side is at most $\eps/3$ for all $n\geq n_0$ by~\eqref{eq:taubound}. Using Proposition~\ref{prop:Otpers} with $t=\frac{1}{\lambda^*}\log n$ we can take $M=M(A,\eps)$ and $n_1=n_1(A,\eps)\geq n_0$ such that the first term on the right-hand side is at least $1-\eps/3$ for all $n\geq n_1$, so that the entire lower bound is at least $1-\frac23\eps$.  We then define
		\be 
		\ell_n\coloneq \frac{d^*}{\lambda^*+d^*}\log n+\frac{\lambda^*}{\lambda^*+d^*}\cK_\alpha\big(r\big(\tfrac{1}{\lambda^*}\log n\big)\big),
		\ee 
		and write 
		\be  
		\bigg\{\Big|\log O_n-\frac{d^*}{\lambda^*+d^*}\log n-\frac{\lambda^*}{\lambda^*+d^*}\cK_\alpha\big(r\big(\tfrac{1}{\lambda^*}\log n\big)\big)\Big|>M'\bigg\}=\big\{O_n\not\in\big[\e^{-M'+\ell_n},\e^{M'+\ell_n}\big]\big\}.
		\ee 
		Now, using that $O_n \overset{\mathrm{d}}{=} N(O_{\tau_n}^\cont)$, where we recall that $(N(t))_{t\geq 0}$, as in~\eqref{eq:Nt}, counts the number of births and deaths in the branching process $(\bp(t))_{t\geq 0}$ up to time $t$, we obtain the lower bound
		\be \ba\label{eq:Ontight} 
		\mathbb P_\cS{}&\bigg(\Big|\log O_n-\frac{d^*}{\lambda^*+d^*}\log n-\frac{\lambda^*}{\lambda^*+d^*}\cK_\alpha\big(r\big(\tfrac{1}{\lambda^*}\log n\big)\big)\Big|<M'\bigg)\\ 
		={}&\Ps{O_n\in\big[\e^{-M'+\ell_n},\e^{M'+\ell_n}\big]}\\ 
		\geq{} & \Ps{O_{\tau_n}^\cont  \in \Big[\frac{1}{\lambda^*}\ell_n-\frac{M'}{2\lambda^*},\frac{1}{\lambda^*}\ell_n+\frac{M'}{2\lambda^*}\Big]}-\Ps{N\Big(\frac{1}{\lambda^*}\ell_n-\frac{M'}{2\lambda^*}\Big)\leq \e^{-M'+\ell_n}}\\ 
		&-\Ps{N\Big(\frac{1}{\lambda^*}\ell_n+\frac{M'}{2\lambda^*}\Big)\geq \e^{M'+\ell_n}}.
		\ea\ee 
		The first probability is at least $1-\frac23\eps$ by taking $M'=2\lambda^* M$ and applying~\eqref{eq:Otaun} with $M$ and $n$ sufficiently large. Then, since $N(t)\geq |\cB((0,t))|$ for all $t\geq 0$ deterministically, 
		\be \ba 
		\Ps{N\Big(\frac{1}{\lambda^*}\ell_n-\frac{M'}{2\lambda^*}\Big)\leq \e^{-M'+\ell_n}}&\leq \Ps{\Big|\cB\Big(0,\frac{1}{\lambda^*}\ell_n-\frac{M'}{2\lambda^*}\Big)\Big|\leq \e^{-M'+\ell_n}}\\ 
		&\leq \Ps{\inf_{t\geq 0}|\cB(0,t)|\e^{-\lambda^*t}\leq \e^{-M'/2}},
		\ea\ee 
		which can be made smaller than $\eps/6$ when taking $M'=2\lambda^* M$ large enough by Corollary~\ref{cor:growth}. In a similar manner, using Corollary~\ref{cor:growth} and that $N(t)\leq 2|\cB(0,t)|$, we can bound the final probability on the right-hand side of~\eqref{eq:Ontight} from above by $\eps/6$ as well when $M$ is large enough, to finally yield the lower bound $1-\eps$, which concludes the proof of~\eqref{eq:Onconv}. 
		
		We continue by proving that $I_n/O_n$ is tight. We apply Proposition~\ref{prop:ItOttight} and~\eqref{eq:taubound} to conclude that for any $\eps>0$ there exist $n_1\geq n_0$ and $M=M(\eps,A,n_1)>0$ such that for all $n\geq n_1$,  
		\be\label{eq:contbound}
		\Ps{I^\cont_{\tau_n}-O^\cont_{\tau_n}<M}\geq \Ps{\forall s\in F_{n,A}: I^\cont_s-O^\cont_s<M}-\Ps{\tau_n \not\in F_{n,A}}\geq 1-\frac23\eps.
		\ee 
		Then, we again use that $I_n\overset{\mathrm{d}}{=} N(I^\cont_{\tau_n})$ and $O_n\overset{\mathrm{d}}{=} N(I^\cont_{\tau_n})$. As a result, for some $C,M'>0$ to be determined, 
		\be \ba \label{eq:discbound}
		\Ps{\frac{I_n}{O_n}<M'}=\Ps{\frac{N(I^\cont_{\tau_n})}{N(O^\cont_{\tau_n})}<M'}\geq{}& \Ps{C^2\exp\big(\lambda^*\big(I^\cont_{\tau_n}-O^\cont_{\tau_n}\big)\big)<M'}\\ 
		&-\Ps{N(I^\cont_{\tau_n})\geq C\exp\big(\lambda^*I^\cont_{\tau_n}\big)}\\ 
		&-\Ps{N(O^\cont_{\tau_n})\leq C^{-1}\exp\big(\lambda^*O^\cont_{\tau_n}\big)}.
		\ea \ee 
		We can bound the first term on the right-hand side from below by $1-2\eps/3$ for all $n\geq n_1$ using~\eqref{eq:contbound} and setting $M'\coloneq C^2\e^{\lambda^*M}$. The second term can be bounded from above by using that $N(t)\leq 2|\cB(0,t)|$ deterministically for all $t\geq0$, so that 
		\be \ba 
		\Ps{N(I^\cont_{\tau_n})\geq C\exp\big(\lambda^*I^\cont_{\tau_n}\big)}&\leq \Ps{|\cB(0,I^\cont_{\tau_n})|\exp\big(-\lambda^*I^\cont_{\tau_n}\big)\geq \tfrac C2}\\ 
		&\leq \Ps{\sup_{t\geq0}|\cB(0,t)|\e^{-\lambda^*t}\geq \tfrac C2}.
		\ea \ee 	
		Applying Corollary~\ref{cor:growth} then yields the upper bound $\eps/6$ when we choose $C$ sufficiently large. In a similar way, since $N(t)\geq |\cB(0,t)|$ deterministically for any $t\geq 0$, 
		\be \ba 
		\Ps{N(O^\cont_{\tau_n})\leq C^{-1}\exp\big(\lambda^*O^\cont_{\tau_n}\big)}&\leq \Ps{|\cB(0,O^\cont_{\tau_n})|\exp\big(-\lambda^*O^\cont_{\tau_n}\big)\leq C^{-1}}\\ 
		&\leq \Ps{\inf_{t\geq 0}|\cB(0,t)|\e^{-\lambda^*t}\leq C^{-1}}.
		\ea \ee 
		Again, Corollary~\ref{cor:growth} yields that the last line is at most $\eps/6$ for $C$ large enough. Combining all three bounds in~\eqref{eq:discbound} finally yields
		\be 
		\Ps{\frac{I_n}{O_n}<M'}\geq 1-\eps \qquad\text{for all }n\geq n_1.
		\ee 
		Combined with~\eqref{eq:Onconv}, we directly arrive at~\eqref{eq:Inconv} as well. 
		
		We conclude by proving~\eqref{eq:maxdegconv}. Here, we use that, by Proposition~\ref{prop:embed}, 
		\be 
		\{\varphi_1(\max_{v\in\cA_n}\deg_n(v):n\in\N\}\overset{\mathrm{d}}{=}\{\varphi_1(\max_{v\in \cA_{\tau_n}^\cont}\deg_{\tau_n}(v)):n\in\N\}.
		\ee 
		We thus are required to prove the result for the quantity on the right-hand side. To this end, we write 
		\be \ba
		\mathbb P_\cS{}&\bigg( \Big|\varphi_1(\max_{v\in \cA_{\tau_n}^\cont}\deg_{\tau_n}(v))-\frac{1}{\lambda^*+d^*}\log n+\frac{1}{\lambda^*+d^*}\cK_\alpha\big(r\big(\tfrac{1}{\lambda^*}\log n\big)\big)\Big|<K\bigg)\\ 
		\geq {}& \mathbb P_\cS\bigg(\forall s \in F_{n,A}:  \Big|\varphi_1(\max_{v\in \cA_s^\cont}\deg_s(v))-\frac{1}{\lambda^*+d^*}\log n+\frac{1}{\lambda^*+d^*}\cK_\alpha\big(r\big(\tfrac{1}{\lambda^*}\log n\big)\big)\Big|<K\bigg)\\ 
		&-\Ps{\tau_n\not\in F_{n,A}}.
		\ea\ee 
		The final term is at most $\eps/2$ for $A$ and $n$ sufficiently large by~\eqref{eq:taubound}. The first probability on the right-hand side is also at least $1-\eps/2$ by setting $C_1'=K$ in Lemma~\ref{lemma:oldsmalloff} and $C_1=K$ in Lemma~\ref{lemma:existoldoff} and taking $K$ sufficiently large (with $t=\frac{1}{\lambda^*}\log n$ and $n$ sufficiently large). We thus arrive at the desired result, which concludes the proof.
	\end{proof}
	
	\section{Discussion and Open Problems}\label{sec:disc}
	
	In this article we have analysed the existence persistent $m$-hubs and the occurrence of persistence for the \emph{preferential attachment tree with vertex death} model. In both the `infinite lifetime' regime and the `finite lifetime' regime, we have seen that the summability condition in~\eqref{ass:varphi2} is crucial for the emergence of either persistent hubs or of persistence. Complemented by the study of lack of persistence in~\cite{HeyLod25}, this provides a good overview of the different regimes and the behaviour therein. Still, there are several further extensions and open problems that we would like to state here.

	\textbf{Characterising the difference between the richest and oldest individual. }  In the `rich are old' regime, we have showed that the difference between the birth-time of the richest and oldest individual in $(\bp(t))_{t\geq 0}$, that is, $I^\cont_t-O^\cont_t$, is a tight sequence of random variables with respect to the probability measure $\mathbb P_\cS$. It remains unclear if more precise results can be proved. 
	
	\begin{problem}
		Under the conditions in Theorem~\ref{thrm:conv}, characterize the random process $(I^\cont_t-O^\cont_t)_{t\geq0}$. 
	\end{problem}
	
	\textbf{Bounded non-converging death rates. } The results presented in the `finite lifetime' regime do not consider the case when the death rates are bounded but do not converge. This is mainly due to the difficulty to determine the lifetime distribution of individuals in complete generality. As an example, let us consider the case of converging birth rates and alternating convergent death rates. That is, take three sequences $(c_i)_{i\in\N}$, $(d_{1,i})_{i\in\N}$ and $(d_{2,i})_{i\in\N}$ such that $\lim_{i\to\infty}c_i=c>0$ and $\lim_{i\to\infty}d_{j,i}=d_j$ for $j\in\{1,2\}$, with $d_1\neq d_2\geq 0$. We set
	\be 
	d=(d_{1,1},d_{2,1},d_{1,2},d_{2,2},\ldots), \qquad b(i)=c_i.
	\ee 
	We then claim (without proof) that the lifetime distribution satisfies 
	\be 
	\lim_{t\to\infty} \frac1t\log \P{L>t}=-\min\bigg\{c+\frac{d_1+d_2}{2}-\sqrt{c^2+\Big(\frac{d_1-d_2}{2}\Big)^2},R\bigg\}=:-\theta. 
	\ee 
	This already non-trivial result can be obtained due to the somewhat simple choice of the birth and death rates. More general birth or death rates (e.g.\ $b$ that diverge to infinity) make it much harder to determine the limiting value of $\theta$ that characterises the exponential decay of the lifetime distribution. 
	
	We expect that in this more general setting, persistence can be shown to occur when $\theta<R$ and when Assumption~\eqref{ass:varphi2} is satisfied.  We leave this as an open problem. 
	
	\begin{problem}
		Suppose the sequences $b$ and $d$ are such that Assumptions~\eqref{ass:A1} and~\eqref{ass:C1} are satisfied. Assume that $\theta\coloneq \lim_{t\to\infty}-\frac1t\log\P{L>t}$ exists and recall $R$ from~\eqref{eq:R}. If $\theta<R$ and Assumption~\eqref{ass:varphi2}, does persistence occur in the sense of~\eqref{eq:pers}?
	\end{problem} 
	
	\textbf{Non-tree graphs. } The analysis carried out here considers trees only. It would be interesting to also study evolving graphs with vertex death that are not trees, e.g.\ when every new vertex connects to $k>1$ many alive vertices. The continuous-time embedding into a CMJ branching process is central in the analysis in this paper and naturally restricts us to the setting of trees. 
	
	In the case of \emph{affine} preferential attachment graphs without vertex death, a procedure known as \emph{collapsing branching processes} is used to extend the branching process analysis to non-tree graphs as well~\cite{GarHof18}. However, this is limited to the affine case only. When including vertex death, this could only have a potential when the death rates are affine as well. Even then, this introduces difficulties due to the fact that now the lifetime of distinct vertices could have dependencies, which further complicates the analysis.
	
	\textbf{Other modelling choices. } In the (embedding of the) PAVD model the offspring of an individual equals the number of children said individual has produced. Similarly, the rate at which an individual produces its next child or dies depends on the total number of children produced. Instead, one could let the birth and death rates depend on the number of \emph{alive} children only. Either way of defining the model can be motivated from applications. However, analysing this alternative model presents more challenges, since now an individual needs to produce $n$ children out of which $n-k$ should die to have an offspring of size $k$. The birth and death rates of an individual thus depend the lifetimes of its children. Deijfen concludes from simulations presented in~\cite{Dei10} that the asymptotic behaviour of the degree distribution should not change significantly, and we expect this to be the case for other properties such as persistence as well. Still, it would be interesting to further investigate these models too.
	
	\section*{Acknowledgements} 
	
	Bas Lodewijks has received funding from the European Union’s Horizon 2022 research and innovation programme under the Marie Sk\l{}odowska-Curie grant agreement no.~$101108569$, ``DynaNet". He also thanks Tejas Iyer for a discussion regarding the proof of Proposition~\ref{prop:P}.
	
	\appendix
	
	\section{Persistent hubs in general CMJ branching processes}\label{app:genproof}
	
	In this appendix we discuss how part of Theorem~\ref{thrm:genpers} can be proved for more general branching processes. To this end, we recall the general definition of CMJ branching processes at the start of Section~\ref{sec:embed}. We make the following assumptions regarding the offspring random variable $D$ and the inter-birth time random variables $(X(i))_{i\in\N}$. 
	
	\begin{assumption}\label{ass:appendix}$\,$
		\begin{enumerate}[label=(\roman*)]
			\item $D$ and $(X(i))_{i\in\N}$ are mutually independent. 
			\item $X(i)<\infty$ for all $i\in\N$ almost surely. 
			\item\label{item:subcritimmediate} $\sum_{k=1}^\infty \P{D\geq k}\prod_{i=0}^{k-1}\P{X(i)=0}<1$. 
			\item\label{item:Dinf} $\P{D=\infty}>0$.
		\end{enumerate}
	\end{assumption}
	
	Part $(i)$ is natural for many classical branching process models (e.g.\ the Bienaym\'e-Galton-Watson branching process, the Yule branching process, and the Bellman-Harris branching process). Part $(ii)$ implies that $D$ corresponds to the number of children born to an individual, since each child will be born a finite amount of time after their parent is born. Part $(iii)$ can be viewed as the mean number of \emph{immediate} children. That is, the number of children born at the same time as their parent. We assume this number is sub-critical, to avoid the branching process being able to grow infinitely large in finite time with positive probability. Finally, Part $(iv)$ states that individual produce infinite offspring with positive probability, which is necessary for the (non-)existence of persistent hubs to be a non-trivial question. It is relatively straightforward to check that the CTPAVD branching process defined in Section~\ref{sec:embed} that we analyse in Sections~\ref{sec:inflife} and~\ref{sec:finlife} is an example of a branching process that satisfies Assumption~\ref{ass:appendix}.
	
	Let us now discuss the further assumptions used in Section~\ref{sec:inflife}. The point process $\cR$, as in~\eqref{eq:D}, is now defined as 
	\be 
	\cR \coloneq \sum_{k=1}^D \delta_{\sum_{i=1}^k X(i)}.  
	\ee  
	With $\mu$ the density of $\cR$ and $\widehat \mu$ its Laplace transform, we have 
	\be 
	\widehat\mu(\lambda)=\E{\sum_{k=1}^D\exp\bigg(-\lambda \sum_{i=1}^k X(i)\bigg)}.
	\ee 
	Assumption~\eqref{ass:mufin} then remains identical, though now for the more general $\widehat \mu$ stated here. Similar to the argument in~\eqref{eq:muless1}, Assumption~\ref{ass:appendix}\ref{item:subcritimmediate} implies that Assumption~\eqref{ass:mufin} is equivalent to the existence of a $\lambda>0$ such that $\widehat\mu(\lambda)<1$. Then, Assumption~\eqref{ass:A1} is generalised to 
	\be
	\sum_{i=1}^\infty X(i)=\infty\qquad \text{almost surely}, \tag{N-E'}\label{ass:NEgen}
	\ee 
	which implies that no individual can produce infinite offspring in finite time, preventing the branching process from growing infinitely large in finite time with positive probability. In Section~\ref{sec:inflife} we worked with Assumption~\eqref{ass:A2} \emph{not} being satisfied, which ensured that individuals can produce infinite offspring with positive probability. This is now replaced by Assumption~\ref{ass:appendix}\ref{item:Dinf}. Finally, Assumption~\eqref{ass:varphi2}, in light of Lemma~\ref{lemma:sumexp} (and Remark~\ref{rem:sumconvass}), is now replaced by the more general 
	\be 
	\sum_{i=1}^\infty (X(i)-X(i)')<\infty \qquad \text{almost surely}, \tag{C-V'}\label{ass:CVgen}
	\ee 
	where $(X(i)')_{i\in\N}$ is an i.i.d.\ copy of $(X(i))_{i\in\N}$. 
	
	With these more general definitions and assumptions at hand, we have the following result, which generalises part of Theorem~\ref{thrm:genpers}.
	
	\begin{theorem}\label{thrm:persappendix}
		Consider the CMJ branching process $(\bp(t))_{t\geq0}$ as defined in Section~\ref{sec:embed}, where the offspring $D$ and inter-birth times $(X(i))_{i\in\N}$ satisfy Assumption~\ref{ass:appendix}. 
		\begin{enumerate}[label=(\alph*)]
			\item If Assumption~\eqref{ass:CVgen} is not satisfied, then, for any $m\in\N$, the branching process $(\bp(t))_{t\geq 0}$ does not contain a persistent $m$-hub $\mathbb P_\cS$-almost surely. 
			\item If Assumption~\eqref{ass:CVgen} is satisfied and  there exists $\lambda>0$ and $K\in\N$  such that 
			\be \label{eq:perscond}
			\widehat\mu(\lambda)<1\qquad \text{and}\qquad \prod_{i=K+1}^\infty\E{\e^{\lambda(X(i)'-X(i))}}<\infty.  
			\ee    
			Then, $\mathbb P_\cS$-almost surely $(\bp(t))_{t\geq0}$ contains a persistent $1$-hub. 
		\end{enumerate}
	\end{theorem} 
	
	\begin{remark}
		Theorem~\ref{thrm:persappendix} does not include the existence of persistent $m$-hubs for $m>1$. This is due to the fact that, in the proof of Theorem~\ref{thrm:genpers}(b), we rely on the memoryless property of the exponential distribution when $m>1$.
		
		Part $(b)$ does not require Assumption~\eqref{ass:NEgen}, as it is implied by $\widehat\mu(\lambda)<1$ and Assumption~\eqref{ass:CVgen}. \ensymboldremark
	\end{remark}
	
	Iyer~\cite[Theorem 2.8]{Iyer24} present sufficient conditions under which a persistent $1$-hub, when it exists, is unique, in the sense that there exists a unique individual in the branching process such that it attains the largest offspring for all but a finite amount of time. This result can directly be extended to the more general class of CMJ branching process as well as to persistent $m$-hubs for any $m\in\N$.  introduced in this appendix. 
	
	\begin{theorem}[Uniqueness of persistent $m$-hubs, Theorem 2.8 in~\cite{Iyer24}]\label{thrm:uniqueappendix}
		Consider the CMJ branching process $(\bp(t))_{t\geq0}$ as defined in Section~\ref{sec:embed}, where the offspring $D$ and inter-birth times $(X(i))_{i\in\N}$ satisfy Assumption~\ref{ass:appendix}. Fix $m\in\N$ and suppose that $(\bp(t))_{t\geq 0}$ contains a persistent $m$-hub $\mathbb P_\cS$-almost surely. Also suppose one of the following conditions is satisfied. 
		\begin{enumerate}[label=(\arabic*)]
			\item We have 
			\be 
			\sum_{k=1}^\infty\P{0\leq \sum_{i=1}^k (X(i)'-X(i))-\sum_{i=1}^k X(i)<X(k+1)}<\infty.
			\ee  
			\item Both of the following conditions hold: 
			\begin{enumerate}[label=(\roman*)]
				\item We have $\sum_{i=1}^\infty \P{X(i)\neq X(i)'}=\infty$, or there exists $i\in\N$ such that $\P{X(i)\neq X(i)'}=1$. 
				\item For any $\eps>0$ we have $\sum_{i=1}^\infty \P{X(i)>\eps}<\infty$.
				\end{enumerate}  
		\end{enumerate}
		Then, the persistent $m$-hub is $\mathbb P_\cS$-almost surely unique.
	\end{theorem} 
	
	\begin{remark}
		We have used Condition (2) to prove the uniqueness of the persistent $m$-hub in the proof of Theorem~\ref{thrm:genpers}(b). \ensymboldremark
	\end{remark}
	
	We conclude this section with a brief discussion where the proof of Theorem~\ref{thrm:genpers}  requires adjustments to be able to prove Theorem~\ref{thrm:persappendix}. 
	
	\textbf{Adapting the proof of Theorem~\ref{thrm:genpers}$(a)$. } 
	
	 The proof of Part $(a)$ of Theorem~\ref{thrm:genpers} relies on Part $(c)$ of Lemma~\ref{lemma:tauwin}, where we replace Assumption~\eqref{ass:varphi2} with Assumption~\eqref{ass:CVgen}. The proof of Lemma~\ref{lemma:tauwin} can be adapted easily by simply 
	 replacing the birth times $(E^{(u)}_i)_{i\in\N}$ and $(E^{(v)}_i)_{i\in\N}$ with $(X(ui))_{i\in\N}$ and $(X(vi))_{i\in\N}$, respectively. Part $(a)$ leverages Lemma~\ref{lemma:finbp}, which holds as we still assume that $\widehat\mu(\lambda)<1$ for some $\lambda<1$. Part $(b)$, assuming Assumption~\eqref{ass:CVgen} is satisfied, requires one additional step. Namely, we distinguish between whether the distribution of the series $\sum_{j=N}^\infty (X(ui)-X(vi))$ has an atom. If it does not, the same proof can be applied. If it does, then~\cite[Theorem 1.13]{Iyer24} tells us that there exists a sequence $(c_j)_{j\in\N}$ of real numbers, where $c_j=0$ for all but finitely many $j$, such that 
	\be 
	\P{X(uj)-X(vj)=c_j}>0\quad \text{for all }j\in\N, \text{ and}\quad \sum_{j=1}^\infty \P{X(uj)-X(vj)\neq c_j}<\infty. 
	\ee 
	Hence, by the Borel-Cantelli lemma, $X(uj)=X(vj)$ for all but finitely many $j$ (as $c_j=0$ for these $j$). As a result, we have that either~\eqref{eq:winv} or~\eqref{eq:winu} is satisfied, or that $\mathbb P_\cS$-almost surely,
	\be 
	\cB(u)-\cB(v)+\sum_{j=0}^{k-1}(X(uj)-X(vj))=0\qquad \text{for all }k\geq K_0.
	\ee  
	In any of these three cases, the conclusion of the proof, that $\cB(uk)\leq \cB(vk)$ for all $k\geq K_0$ or $\cB(vk)\leq \cB(uk)$ for all $k\geq K_0$, is still valid. 
	
	Finally, for Part $(c)$ of Lemma~\ref{lemma:tauwin}, replacing $E^{(v)}_i$ with $X(vi)$ for $v=u$ and $v=uj$ and using that Assumption~\eqref{ass:CVgen} is not sufficient for the proof to hold.  
	
	The proof of Theorem~\ref{thrm:genpers}$(a)$ then follows in the same manner, again replacing $E_i^{(v)}$ with $X(vi)$ for the appropriate individuals $v$, and assuming Assumption~\eqref{ass:CVgen} is not satisfied. This yields Theorem~\ref{thrm:persappendix}$(a)$. 
	
	\textbf{Adapting the proof of Theorem~\ref{thrm:genpers}$(b)$.}
	
	As in Part $(a)$, we substitute $E^{(v)}_i$ with $X(vi)$ for any individual $v\in \cU_\infty$. Instead of Assumptions~\eqref{ass:A1}, \eqref{ass:mufin}, \eqref{ass:varphi2}, and~\eqref{ass:A2}, we instead use Assumptions~\ref{ass:appendix}, \eqref{ass:CVgen}, and~\eqref{eq:perscond}. Then, as the second~\eqref{eq:perscond} allows us to use Lemma~\ref{lemma:catch}, Proposition~\ref{prop:catch} follows in the same manner. For Proposition~\ref{prop:P} when $m=1$ requires minor adaptations, as discussed after the proof of Proposition~\ref{prop:P} already. These adaptations are sufficient for the proof to work in the more general setting we are in, here, as well. When $m>1$, we were not able to avoid using the memoryless property of the exponential distribution at several places, so that a full generalisation of Theorem~\ref{thrm:genpers}$(b)$ is out of reach, though we expect the result to be true. The proof of Theorem~\ref{thrm:genpers}$(b)$ itself then requires no further adaptations, with the exception of the proof of the uniqueness of persistent $1$-hubs. Here, by using one of the conditions in~\ref{thrm:uniqueappendix}, one can prove that the condition in~\eqref{eq:eqdeg} is satisfied with probability zero.

	\bibliographystyle{abbrv}
	\bibliography{paverbib}

\end{document}